 \DeclareMathOperator{\mes}{mes}
\begin{document}

\markboth{M. Chugunova, H. Ji, R. Taranets and  N. Vasylyeva}{Analysis of a Radiotherapy Model for Brain Tumors}

%
\catchline{}{}{}{}{}
%

\title{ Analysis of a Radiotherapy Model for Brain Tumors}


\author{Marina Chugunova\footnote{Corresponding author}}

\address{Claremont Graduate University,
150 E. 10th Str., \\
CA 91711 Claremont, USA\\
marina.chugunova@cgu.edu}

\author{Hangjie Ji}
\address{North Carolina State University, 
3212 SAS Hall, \\
NC 27695 Raleigh, USA,\\ 
hangjie\underline{\ }ji@ncsu.edu}

\author{Roman Taranets}
\address{Institute of Applied Mathematics and Mechanics of NAS of Ukraine
G.Batyuka Str.\ 19, \\
84100 Sloviansk, Ukraine\\
taranets\underline{\ }r@yahoo.com}

\author{Nataliya Vasylyeva}
\address{Institute of Applied Mathematics and Mechanics of NAS of Ukraine
G.Batyuka Str.\ 19, \\
84100 Sloviansk, Ukraine; and\\
Dipartimento di Matematica, Politecnico di Milano,
Piazza Leonardo da Vinci 32, \\
20133 Milano, Italy\\
nataliy\underline{\ }v@yahoo.com}



\maketitle

\begin{history}
\received{(Day Month Year)}
\revised{(Day Month Year)}
\comby{(xxxxxxxxxx)}
\end{history}



\begin{abstract}
In this work, we focus on the analytical and numerical study of a
mathematical model for brain tumors with radiotherapy influence.
Under certain assumptions on the given data in the model, we prove
existence  and uniqueness of a  weak nonnegative (biological
relevant) solution. Then, assuming only more regular initial data,
we obtain the extra regularity of this solution.  Besides, we
analyze the optimal control of the advection coefficient responding
for the radiotherapy effect on the tumor cell population. Finally,
we provide numerical illustration to all obtained analytical
results.
\end{abstract}

\keywords{Well-posedness, regularity, optimal control,  numerical tests,
 radiotherapy in brain tumor}

\ccode{AMS Subject Classification: 35K57, 49K20, 92D95}

\section{ Introduction }\label{s1}

\noindent Mathematical modeling of optimal control for radiotherapy
of cancer involves the development and analysis of various
models to improve the precision and effectiveness of treatment.
These models are essential for estimating the optimal radiation dose
and scheduling, thereby maximizing tumor control while minimizing
damage to healthy tissues.

The choice of mathematical models significantly impacts the
estimation of the minimum radiation dose required for effective
tumor control. Kutuva et al. emphasized the importance of a model
selection in radiotherapy planning, highlighting how different
models can influence dose estimations \cite{kutva2023}.

Stochastic processes provide a robust framework for modeling
radiation - induced cell survival and tumor response. Hanin
introduced an iterated birth and death process to describe the
stochastic nature of cell survival after radiation exposure,
offering a foundational model in this area \cite{hanin2001}.
Further, Hanin  developed a stochastic model to analyze tumor
response to fractionated radiation therapy, focusing on limit
theorems and convergence rates \cite{hanin2004}.

Alternative approaches to the traditional linear-quadratic model
have been proposed to better understand radiation effects.
Hanin$\&$Zaider  presented a mechanistic description of cell
survival probability at high radiation doses, providing an
alternative framework that improves upon the linear-quadratic model
\cite{hanin2010}. In another study, Hanin$\&$Zaider described the
kinetics of radiation-induced damage and healing in normal tissues,
contributing to the development of more accurate mechanistic models
for radiotherapy \cite{hanin2013}.

Optimization strategies for fractionated radiation therapy have been
explored through various ma\-the\-ma\-ti\-cal principles.
Hanin$\&$Zaider
 proposed optimal radiation schedules using the greedy
principle, demonstrating a biologically-based adaptive boosting
approach to improve treatment efficacy \cite{hanin2014}. Hanin,
Rachev$\&$Yakovlev addressed the optimal control of a cancer
radiotherapy for non-homogeneous cell populations, employing
advanced probabilistic methods to tailor treatment strategies
\cite{hanin1993}.

Comprehensive stochastic models integrating multiple biological
factors have also been developed. Hanin et al.  presented a
comprehensive model for irradiated cell populations in culture,
accounting for various stochastic and biological processes
\cite{hanin2006}. Hlatky, Hahnfeldt$\&$Sachs  examined the influence
of time-dependent stochastic heterogeneity on cell population
radiation response, highlighting the complexity of tumor dynamics
\cite{hlatky1994}.

Incorporating cell cycle effects into tumor control probability
models has improved predictions of treatment outcomes. Hillen et al.
 linked cell population models with tumor control probability,
considering the impact of cell cycle dynamics on radiotherapy
efficacy \cite{hillen2010}. Iwasa, Nowak$\&$Michor  discussed the
evolution of resistance during clonal expansion, providing insights
into the adaptive nature of tumors under radiation therapy
\cite{iwasa2006}.

Recent studies have explored novel mechanisms influencing tumor
behavior and treatment response. Gerlee et al.  explained how
autocrine signaling can lead to Allee effects in cancer cell
populations, offering a new perspective on tumor growth dynamics
\cite{gerlee2022}. Clinical studies, such as Byun et al., evaluated
the prognostic potential of mid-treatment nodal response in
oropharyngeal squamous cell carcinoma, contributing to personalized
treatment approaches \cite{byun2020}.

Gene expression models have also been employed to predict tumor
radiosensitivity and treatment prognosis. Eschrich et al. developed
a gene expression model to forecast response and prognosis after
chemoradiation, highlighting the role of molecular markers in
treatment planning \cite{eschrich2009}. Hendry$\&$Moore investigated
the factors contributing to the steepness of dose-incidence curves
for tumor control, examining whether variations arise before or
after irradiation \cite{hendry1984}.

Additional studies have focused on the effects of combined therapies
and spatial optimization. Powathil et al.  modeled the effects of
radiotherapy and chemotherapy on brain tumors, providing insights
into combined treatment strategies \cite{powathil2007}. Billy,
Clairambault$\&$Fercoq  optimized cancer drug treatments using cell
population dynamics, emphasizing the role of mathematical methods in
drug scheduling \cite{billy2013}. Bratus et al.  maximized viability
time in a mathematical model of cancer therapy, demonstrating
optimization techniques for prolonging treatment effectiveness
\cite{bratus2017}. Meaney et al.  addressed spatial optimization for
radiation therapy of brain tumors, contributing to the precision of
treatment delivery \cite{meaney2019}. Swanson et al.  used
mathematical modeling to quantify glioma growth and invasion,
offering a virtual platform to simulate tumor dynamics and optimize
therapy \cite{swanson2003}.

The article by Rockne et al.,  models the effect of radiotherapy on
glioma patients, demonstrating how mathematical modeling can predict
tumor response and assist in treatment planning
\cite{rockne2008modeling}. In another study, Rockne et al. apply a
mathematical model to predict the efficacy of radiotherapy in
individual glioblastoma patients, providing a personalized approach
to treatment and highlighting the potential for improved clinical
outcomes through tailored therapy \cite{rockne2010predicting}.

In this paper, motivated by the discussion above, we focus on the
analytical and numerical study of  the model suggested by Rockne et
al. \cite{rockne2010predicting}. Let $\Omega\subset\mathbb{R}^{d},$
with $d=1,2,3,$ be a bounded domain with a smooth boundary
$\partial\Omega$, and let positive $T$ be an arbitrarily fixed final
time. Setting
\[
\Omega_{T}=\Omega\times(0,T)\qquad\text{and}\qquad
\partial\Omega_{T}=\partial\Omega\times(0,T],
\]
we consider the following nonlinear parabolic equation with the
unknown function (describing tumor cell density)
$u=u(x,t):\Omega_{T}\mapsto\mathbb{R},$
\begin{equation}\label{c-1}
u_{t} - \nabla (D(x) \nabla u ) = a(x,t) u(1-u)  \qquad\text{ in
}\quad \Omega_T,
\end{equation}
coupled with the homogenous  Neumann boundary condition
\begin{equation}\label{c-2}
 \nabla u \cdot \textbf{n}  = 0 \qquad\quad  \text{ on }\quad \partial \Omega_T,
\end{equation}
and the initial data
\begin{equation}\label{c-3}
 u(x,0) = u_0(x) \qquad\text{in }\quad \Omega.
\end{equation}
Here $\textbf{n}$ is the unit outward normal vector to
$\partial\Omega;$ $D(x)$ is the net invasion rate; $a(x,t) = \rho -
R(x,t),$ where $\rho$ is the net proliferation rate, while $R(x,t)$
is the loss term represents the effects of the external beam
radiation therapy; the function $u_0(x)$ is the initial tumor cell
density. The no-flux boundary condition (\ref{c-2}) is imposed to
prevent tumor cells from leaving the brain domain at its boundary
$\partial \Omega$.

We assume that tumor cells only spread in brain tissue and do not
diffuse into any other tissue that is not segmented as gray or white
matter. In order to model the reduced gray matter infiltration in
comparison to white matter, the invasion rate $D$ is dependent on
$x$. In \cite{swanson2003}, Swanson et al. took  into account the
brain tissue heterogeneity
$$
D(x)=\left\{\begin{array}{rc}
D_g & \text {in grey brain tissue, } \\
D_w & \,\text { in white brain tissue, }
\end{array}\right.
$$
where $D_w>>D_g>0$ corresponding to the observation that tumor cells
diffuse faster on white matter.

Note that, the difference between the net proliferation rate and the
loss term (i.e. $a(x,t)$) reflects the overall impact of external
beam radiation therapy on cell growth and survival. A positive
difference indicates that cell proliferation is outpacing cell loss,
which may suggest resistance to radiation therapy. Conversely, a
negative difference suggests that cell loss is greater than
proliferation, indicating a more effective response to treatment.

To determine the loss term $R(x,t)$, the classic linear-quadratic
model is used to quantify radiation efficacy (see
\cite{rockne2008modeling, rockne2010predicting}):
$$
E=\mu \text { Dose }+\nu\text { Dose }^2,
$$
in which the radiation dose (defined in both space and time) is
related to an effective dose. The coefficients $\mu$ and $\nu$ are
the radiobiology parameters and determine the relative contribution
of each term in the sum toward the total radiation effect and the
ratio of the parameters $\mu / \nu$ represents the tissue response.
The ratio $\mu / \nu$ as for many tumors is considered to be in the
order of 10 Gy for brain tumors \cite{rockne2008modeling}. We set
$\mu / \nu=10$ Gy and so $\mu$ is the sole radiation efficacy
parameter. As in \cite{rockne2008modeling, rockne2010predicting}, we
assume that the effect of concurrent radiotherapy is included in the
net response parameter $\mu$.

The probability of survival of glioma cells after the administration
of radiation is defined in the following way
$$
\mathcal{S}=\exp (-E),
$$
where the larger the dose, the smaller the probability of survival.
Clinically, normal radiotherapy is administered in a series of
fractions in order to maximize tumor response while minimizing
normal tissue toxicity. This frequently results in daily dosages
ranging from $1$ to $2.5 \mathrm{~Gy}$. Consequently, $R(\mathrm{x},
t)$ is defined as
$$
R(\mathrm{x}, t)= \begin{cases}0 & \text { for } t \notin \text {
therapy, } \\ 1-\mathcal{S}(\mu, \operatorname{Dose}(\mathrm{x},
\mathrm{t})) & \text { for } t \in \text { therapy, }\end{cases}
$$
representing the effect of radiotherapy on the tumor cell population
which is given by the probability of death, one minus the
probability of survival.

The partial differential equation (\ref{c-1}) with $D(x)$ and $a(x,
t)$ being positive constants is the well-known
Kolmogorov-Piskunov-Petrovsky (KPP) equation. The KPP equation, also
called as the Fisher-KPP equation, has been extensively exploited
and studied in  mathematical biology and nonlinear dynamics (see,
e.g.  \cite{KPP37,Fis37}). Investigation of initial-boundary value
problems (IBVPs) associated with  equations similar to \eqref{c-1}
including well-posedness, stability of corresponding solutions, the
existence of traveling wave solutions, and various applications of
the KPP in population dynamics, epidemiology and pattern formation
have been discussed in \cite{Grin91,Mur02,Mur03} (see also
references therein). Some key results include the description of
conditions providing solvability, uniqueness, the characterization
of wave-front propagation speed and the analysis of bifurcations and
stability of steady states (see, e.g.
\cite{kpp1,kpp2,BF18,HR16,FM77}). The most of the outcomes reported
in the aforementioned papers concern with the models having constant
or location-dependent coefficients. To the best author's knowledge,
equations similar \eqref{c-1} with $D$ and $a$ being functions are
not  actively analytical studied. We refer the very recent article
\cite{WXZZ21} in which the authors utilize the initial-boundary
value problem to \eqref{c-1} subject to the homogeneous Neumann
boundary condition to describe the diffusive SIS model in
mathematical epidemiology. In particular,  under more regular
strictly positive initial data ($u_0 \in W^{2,2}(\Omega)$) and a
bounded positive $a(x,t)$, the existence of weak solutions to this
problem and the stabilization of these solutions are established.
Finally, we mention optimal control problems for the Fisher-KPP type
equations which are actively discussed for different type of
``cost'' functional (see, e.g.
\cite{FLV12,CGM,DFLLY10,rockne2010predicting,kao2}). In particular,
the work  \cite{rockne2010predicting} analyzes  an optimal control
problem to (\ref{c-1}) with different optimization functionals  and
with a number of additional conditions like, for example, short- and
long-time existence of nonnegative weak solutions (without full
details).

Having said that the picture of investigations is now pretty clear,
there still remain some important unexplored issues related with the
 analytical study of \eqref{c-1}--\eqref{c-3} in the case
of the time and space depending coefficients having weaker
regularity. The present paper aims to fill this gap in the
 qualitative  analysis of \eqref{c-1}--\eqref{c-3}. The
main achievements of this art can be summarized in the following
three points.

\noindent\textbf{I:}\textit{ Well-posedness and regularity of a
solution to \eqref{c-1}--\eqref{c-3}} Assuming a weaker regularity
of given functions in \eqref{c-1}--\eqref{c-3} and making the
non-sign condition on the coefficients $a(x,t),$ we discuss the
 local and the global
one-to-one weak solvability of the problem. Moreover, we describe
assumptions on the initial data $u_{0}$ which allow us to get a
nonnegative bounded weak solution in \eqref{c-1}-\eqref{c-3}
possessing extra regularity. Utilizing the obtained results and
requiring ceratin additional assumptions on $u_{0}$, we prove the
existence of the absorbing set in the corresponding functional
classes. In biological view point, the latter means that the
biologically relevant domain $\mathfrak{R}=\{u\in L^{2}(\Omega):\,
0\leq u\leq 1\}$ is invariant for evolution problem
\eqref{c-1}--\eqref{c-3}. The main tools exploited in this
investigation are Galerkin approximation scheme, energy and entropy
estimates, nonlinear Gr\"{o}nwall inequality and Stampacchia approach
modified to our targets. In fine, we notice that the most of these
qualitative outcomes ply a key role to reach our second goal in this
study.

\smallskip
\noindent\textbf{II:}\textit{ Optimal control in
\eqref{c-1}--\eqref{c-3}.} We discuss the problem of searching the
control $u$ which provides the optimal effect of the radiation
therapy to treat a  tumor in the brain describing via mathematical
model \eqref{c-1}-\eqref{c-3}. From the mathematical point of view,
this problem consists in minimizing a certain cost functional $J(R)$
subject to \eqref{c-1}--\eqref{c-3} in a given class of admissible
controls, $R\in\mathcal{U}$. Here, we prove the existence of the
optimal control $R^{*}$ and describe its main properties including
the sensitivity, the necessary condition and the uniqueness of
$R^{*}$ (depending on the certain assumptions on the constrain). On
this route, we incorporate  standard techniques (with some
modifications) of the control theory to parabolic equations.

\smallskip
\noindent\textbf{III:}\textit{ Quantitative analysis in
\eqref{c-1}--\eqref{c-3}.} We complement our analytical results by numerically simulating the optimal control problem discussed in II. We formulate the optimal control problem as a PDE-constrained optimization problem with additional constraints imposed on the admissible controls $R\in\mathcal{U}$. To identify the optimal control $R^*$, we perform gradient descent on the optimization problem and verify the algorithm using both one-dimensional and two-dimensional numerical examples.


\subsection*{Outline of the paper.} In Section \ref{s2}, we
introduce the notation and the functional setting along with some
key inequalities. In Section \ref{s3}, we state the main results
concerning with the local and global weak solvability of
\eqref{c-1}--\eqref{c-3} (Theorems \ref{Th-ex} and \ref{t3}) and the
regularity of this solution (Theorems \ref{Th-ex1} and \ref{Th-2},
Corollary \ref{R-1}). The verification of these results are carried
out in Sections \ref{s4}-\ref{s7}. In particular, the local weak
solvability, the uniqueness and the nonnegativity of $u$ stated in
Theorem \ref{Th-ex} are obtained in Section \ref{s4}.  Section
\ref{s5} is devoted to the proof of Theorem \ref{Th-ex1} related
with the boundedness of a non-negative weak solution. The higher
regularity of $u$ established in Theorem \ref{Th-2} are examined in
Section \ref{s6}. The verification of one-to-one global weak
solvability along with the extra regularity of $u$ (Theorem
\ref{t3}) is executed in Section \ref{s7}. Section \ref{s8} is
dedicated to the optimal control study: the existence of $R^{*}$
(Theorem \ref{lem-e}); the sensitivity of the map $R\mapsto u$
(Lemma \ref{lem-s}); the necessary condition of the optimality
(Theorem \ref{Th-opt}) and the uniqueness of $R^{*}$ (Theorem
\ref{Th-un}). In the final Section \ref{s9}, we treat \eqref{c-1}--\eqref{c-3} from the numerical side.


\section{Functional Setting and Notations}\label{s2}

Throughout this paper, for any $p\geq 1,$ $s\geq 0,$ and  any Banach
space $(\mathbf{X},\|\cdot\|_{\mathbf{X}}),$ we exploit the usual
spaces
\[
W^{s,p}(\Omega),\quad L^{p}(\Omega),\quad
L^{p}(0,T;\mathbf{X}),\quad W^{s,p}(0,T;\mathbf{X}),\quad
C([0,T];\mathbf{X}).
\]
For notational simplicity, we denote
\[
\|\cdot\|=\|\cdot\|_{L^{2}(\Omega)}\quad \text{and}\quad
H^{1}(\Omega)=W^{1,2}(\Omega),
\]
besides the brackets $(\cdot,\cdot)$ and $\langle\cdot,\cdot\rangle$
stand for the scalar product in $L^{2}(\Omega)$ and the duality
pairing on $(H^{1}(\Omega))^{*}\times H^{1}(\Omega)$,
correspondingly.

\noindent Let $\mathbf{H}$ be a real separable Hilbert space with a
norm $\|\cdot\|_{\mathbf{H}}$ and $\mathbf{Y}$ be a Hilbert space
such that
\[
\mathbf{Y}\hookrightarrow \hookrightarrow\mathbf{H}\hookrightarrow
\mathbf{Y}_{1}
\]
is a Gelfand triple.

In the existence proof, the parabolic embedding is typically
employed. In particulary, for $q,p$ and $r$ satisfying the relations
\[
\frac{1}{r}+\frac{d}{pq}=\frac{d}{p^{2}},\qquad\
\begin{cases}
q\in\big[p,\frac{dp}{d-p}\big],\quad
r\in[p,+\infty]\quad\quad\text{if}\quad 1\leq p<d,\\
q\in[p,+\infty),\quad r\in\big(\frac{p^{2}}{d},+\infty\big]\quad
\text{if}\quad 1<d\leq p,
\end{cases}
\]
\cite[Proposition 3.4, Chapter 1]{DiB} establishes the following
compact embedding
\begin{equation}\label{2.0}
\|v\|_{L^{r}(0,T;L^{q}(\Omega))}\leq
c_{0}(1+T|\Omega|^{-p/d})^{1/r}\|v\|_{V^{p}(\Omega_{T})},
\end{equation}
where $c_{0}$ is a positive constant depending only on $d,p$ and the
structure $\partial\Omega$, and
\[
V^{p}(\Omega_{T})=\{v\in L^{\infty}(0,T;L^{p}(\Omega))\cap
L^{p}(0,T; W^{1,p}(\Omega))\}.
\]
 Moreover, \cite[Proposition 3.2, Chapter 1]{DiB} provides the estimate
 \begin{equation}\label{2.0*}
\|v\|_{L^{q}(\Omega_{T})}\leq
c_{0}(1+T|\Omega|^{-\frac{p}{d}})^{1/q}\|v\|_{V^{p}(\Omega_{T})}
 \end{equation}
for  $q=\frac{p(d+p)}{d}. $

We also utilize the following continuous embedding in the
time-continuous functional space to establish additional regularity
of a solution to the initial-boundary value problem,
\[
\mathcal{W}_{2,2}^{1}(0,T; \mathbf{Y},\mathbf{Y}_{1})=\bigg\{v\in
L^{2}(0,T;\mathbf{Y}):\quad \frac{\partial v}{\partial t}\in
L^{2}(0,T;\mathbf{Y}_{1})\bigg\}\hookrightarrow
C([0,T];[\mathbf{Y},\mathbf{Y}_{1}]_{1/2}),
\]
where $[\mathbf{Y},\mathbf{Y}_{1}]_{1/2}$ denotes the interpolation
space between $\mathbf{Y}$ and $\mathbf{Y}_{1}$ of order $1/2$  (see
\cite[Theorem 3.1, Chapter 1]{LM}), e.g.
\[
[H^{1}(\Omega),(H^{1}(\Omega))^{*}]_{1/2}=L^{2}(\Omega),
\]
which in turn means
\begin{equation}\label{i.1}
\mathcal{W}_{2,2}^{1}(0,T;
H^{1}(\Omega),(H^{1}(\Omega))^{*})\hookrightarrow
C([0,T];L^{2}(\Omega)).
\end{equation}

Finally, we conclude this section with some inequalities that will
play a key role in the getting well-posedness and regularity of a
solution to \eqref{c-1}-\eqref{c-3}:

\noindent$\diamondsuit$ \textbf{Gagliardo-Nirenberg inequality:} for
every $v=v(x)\in L^{p}(\Omega)$, $p> 1,$
\[
\|\mathcal{D}_{x}^{j}v\|_{L^{p}}\leq
c_{1}\|\mathcal{D}_{x}^{m}v\|^{\theta}_{L^{r}}\|v\|^{1-\theta}_{L^{q}}+c_{2}\|v\|_{L^{q}}
\]
for $1\leq q\leq\infty $, $r> 1$, $0\leq j<m$ and $j/m\leq
\theta\leq 1$ satisfying
\[
\frac{1}{p}=\frac{j}{d}+\theta\Big(\frac{1}{r}-\frac{m}{d}\Big)+\frac{1}{q}(1-\theta).
\]
In particular case, this inequality reads as
\begin{equation}\label{g-n-0}
\| v  \|_{L^p (\Omega )}  \leqslant c_1  \| \mathcal{D}_{x} v
\|^{\theta }  \| v  \|^{1 - \theta } + c_2 \| v \|
\quad\text{with}\quad \theta = \frac{d(p -2)} {2p} \in (0,1]  ,
\end{equation}
where $p > 2$ and $c_2 = 0$ if $\Omega$ is unbounded or $v$ has a
compact support.

\noindent$\diamondsuit$ \textbf{Jensen  inequality for sum:} for
each non-negative number $b_{i},$ $i=1,2,...,M,$ and positive $p$,
\[
A_{1}\sum_{j=1}^{M}b^{p}_{j}\leq
\Big(\sum_{j=1}^{M}b_{j}\Big)^{p}\leq A_{2}\sum_{j=1}^{M}b_{j}^{p}
\]
with $A_{1}=\min\{1,M^{p-1}\}$ and $A_{2}=\max\{1,M^{p-1}\}$.

\noindent$\diamondsuit$ \textbf{Nonlinear generalization of
Gr\"{o}nwal inequality:} if a non-negative function $v=v(t)$
satisfies the  inequality
\[
v(t)\leq c_{3}+\int_{t_{0}}^{t}[b_{0}(s)v(s)+ b(s)v^{\gamma}(s)]ds
\]
with non-negative continuous functions $b_{0}(s),$ $b(s)$ for
$t>t_{0}$, $\gamma>1$ and
\[
0\leq
c_{3}<\bigg[(\gamma-1)\int_{t_{0}}^{t_{0}+h}b(s)ds\bigg]^{-\frac{1}{\gamma-1}}\exp\bigg\{-\int_{t_{0}}^{t_{0}+h}b_{0}(s)ds\bigg\},\quad
h>0,
\]
then
\begin{equation}\label{2.3}
v(t)\leq
c_{3}\bigg[\exp\bigg\{(1-\gamma)\int_{t_{0}}^{t_{0}+h}b_{0}(s)ds\bigg\}
-c_{3}^{-1}(\gamma-1)\int_{t_{0}}^{t}b(s)ds
\bigg]^{\frac{1}{\gamma-1}}
\end{equation}
for $t\in[t_{0},t_{0}+h].$

This inequality is particular case of  \cite[Theorem 21]{Dr}
rewritten here to our target.

\section{Well-posedness and Regularity: Statements of the Main Results}\label{s3}

\noindent For readers' convenience, we begin this section with
recalling the pretty standard definition of a weak solution to
initial-boundary value problem \eqref{c-1}--\eqref{c-3}.

\begin{definition}\label{weak-sol}
A function $u=u(x,t):\Omega_{T}\mapsto\mathbb{R}$ is  called a
strong solution to problem \eqref{c-1}--\eqref{c-3} if

\noindent$\bullet$ $u\in V^{2}(\Omega_{T})$ and $\frac{\partial
u}{\partial t}\in L^{2}(0,T;(H^{1}(\Omega))^{*})$;

\noindent$\bullet$ the initial condition \eqref{c-3} holds in
$L^{2}(\Omega)$;

\noindent$\bullet$ for any positive $T$ and each function $\phi\in
L^{2}(0,T;H^{1}(\Omega)),$ the identity holds
\begin{equation}\label{ident}
\int \limits_{0}^T \langle \partial_t u, \phi \rangle \, dt  + \int
\limits_{\Omega_T}  D (x)  \nabla u   \nabla \phi \,dx dt= \int
\limits_{\Omega_T}  a(x,t)  u  (1 - u ) \phi \,dx dt.
\end{equation}
\end{definition}

\noindent Throughout this paper, we will assume that the coefficient
$D=D(x)$ meets the following requirement
\begin{equation}\label{2.2}
D\in L^{\infty}(\Omega)\qquad\text{and}\qquad D\geq d_{0}>0
\end{equation}
with some positive constant $d_{0}$.

Our first result related with the local (in time) solvability of
problem \eqref{c-1}--\eqref{c-3}.

\begin{theorem}\label{Th-ex}\verb"(Local Solvability)"
Let $d=1,2,3,$  a nonnegative $ u_0(x) \in L^{2}(\Omega)$ and
$a(x,t) \in L^{\infty}(\Omega_{\infty})$. Then under assumption
\eqref{2.2}, problem \eqref{c-1}-\eqref{c-3} admits a unique
non-negative weak local solution $u=u(x,t)$ defined in
$\Omega_{T^{*}}$, where positive $T^{*}$ depends only on
 $\|u_0\|$, $\| a \|_{L^{\infty}(\Omega_{\infty})}$; the Lebesgue
 measure of
$\Omega$; the parameters  $d$ and $d_{0}$.
\end{theorem}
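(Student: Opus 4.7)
The plan is to construct a solution via a Faedo–Galerkin scheme and extract the required qualitative properties by energy estimates. I would fix an $L^2$–orthonormal basis $\{\psi_k\}$ of eigenfunctions of the Neumann Laplacian, look for $u^N(x,t)=\sum_{k=1}^N c_k^N(t)\psi_k(x)$, and project \eqref{c-1} onto $\mathrm{span}\{\psi_1,\dots,\psi_N\}$. The resulting ODE system has a polynomial right-hand side, so Peano/Picard provides a unique local-in-time approximate solution $u^N$ satisfying $u^N(0)=P_N u_0 \to u_0$ in $L^2(\Omega)$.

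Testing the Galerkin equation against $u^N$ and using \eqref{2.2}, the cubic term $-\int_\Omega a (u^N)^3\,dx$ is the one that lacks a sign because no sign is imposed on $a$. I would bound it by $\|a\|_{L^\infty(\Omega_\infty)}\|u^N\|_{L^3}^3$ and then apply the Gagliardo–Nirenberg inequality \eqref{g-n-0} with $p=3$, which yields $\|u^N\|_{L^3}^3\leq C(\|\nabla u^N\|^{d/2}\|u^N\|^{3-d/2}+\|u^N\|^3)$. A suitable Young inequality absorbs the gradient factor into $\varepsilon\|\nabla u^N\|^2$ and leaves a power of $\|u^N\|^2$, the exponent being $\gamma=3$ in $d=3$, $\gamma=2$ in $d=2$, and $\gamma=5/3$ in $d=1$; all three are $>1$, which is exactly the regime where the nonlinear Grönwall estimate \eqref{2.3} applies. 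Setting $v(t)=\|u^N(t)\|^2$ we arrive at
$$v(t)+d_0\int_0^t\|\nabla u^N\|^2\,ds\leq \|u_0\|^2+\int_0^t\bigl[b_0(s) v(s)+b(s) v^\gamma(s)\bigr]ds,$$
with $b_0,b$ depending only on $\|a\|_{L^\infty(\Omega_\infty)}$, $|\Omega|$, $d$, $d_0$. Invoking \eqref{2.3} gives a time $T^*>0$, depending exactly on the quantities listed in the statement, and a uniform bound $\|u^N\|_{L^\infty(0,T^*;L^2(\Omega))}+\|u^N\|_{L^2(0,T^*;H^1(\Omega))}\leq M$. Because $u^N(1-u^N)\in L^\infty(0,T^*;L^{3/2}(\Omega))\hookrightarrow L^2(0,T^*;(H^1(\Omega))^*)$ for $d\leq 3$, I would in turn bound $\partial_t u^N$ in $L^2(0,T^*;(H^1(\Omega))^*)$.

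Passage to the limit is then the standard Aubin–Lions argument: extract a subsequence converging weakly in $L^2(0,T^*;H^1(\Omega))$, weak-$*$ in $L^\infty(0,T^*;L^2(\Omega))$, and strongly in $L^2(\Omega_{T^*})$, the last of which identifies the quadratic nonlinearity in the limit. The embedding \eqref{i.1} yields $u\in C([0,T^*];L^2(\Omega))$ and realizes the initial datum. For nonnegativity, I would test the identity \eqref{ident} against $u_{-}:=\min(u,0)$ (after truncation by a smooth cut-off, then passing to the limit as in Stampacchia's approach); the cubic contribution has the favorable sign on $\{u\leq 0\}$, $u_{-}(\cdot,0)\equiv 0$ since $u_0\geq 0$, and a standard $L^2$ Grönwall argument forces $u_{-}\equiv 0$. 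Finally, for uniqueness, if $u_1,u_2$ are two solutions and $w=u_1-u_2$, then $w$ satisfies $w_t-\nabla(D\nabla w)=aw-a(u_1+u_2)w$ with $w(0)=0$; testing against $w$ and controlling the quadratic term via Gagliardo–Nirenberg exactly as above yields $\frac{d}{dt}\|w\|^2+d_0\|\nabla w\|^2\leq C(t)\|w\|^2$ with $C\in L^1(0,T^*)$, and Grönwall forces $w\equiv 0$. The main obstacle throughout is the missing sign of $a$, which rules out a global-in-time a priori bound and forces the super-linear Grönwall \eqref{2.3}; this is exactly why the existence time $T^*$ is only local and depends on the listed quantities.
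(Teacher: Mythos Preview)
Your overall strategy---Galerkin approximation, energy estimate with Gagliardo--Nirenberg on the cubic term, nonlinear Gr\"onwall \eqref{2.3} to extract a local time, Aubin--Lions compactness, testing with the negative part for nonnegativity, and a difference argument for uniqueness---matches the paper's proof in Section~\ref{s4} essentially step for step, including the exponent $\gamma=(6-d)/(4-d)$ you obtain. Two points, however, are not quite right as stated.

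First, the claim that $u^N(1-u^N)\in L^\infty(0,T^*;L^{3/2}(\Omega))$ does not follow from the energy bound: from $u^N\in L^\infty_tL^2_x$ you only get $(u^N)^2\in L^\infty_tL^1_x$. The paper instead derives $u^N\in L^2(0,T^*;L^4(\Omega))$ from the $V^2$ bound via \eqref{g-n-0}, and then estimates $\langle\partial_t u^N,w\rangle$ directly using H\"older and the parabolic embeddings \eqref{2.0}--\eqref{2.0*}; this is what actually places $\partial_t u^N$ in $L^2(0,T^*;(H^1(\Omega))^*)$.

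Second, in the nonnegativity step your assertion that ``the cubic contribution has the favorable sign on $\{u\le 0\}$'' is false: with $w=\max(-u,0)$ one gets $\int a\,u(1-u)\,w\,dx=\int a(w^2+w^3)\,dx$, and since $a$ carries no sign the cubic term is not signed. The paper bounds it by $\|a\|_{L^\infty}\int(|w|^2+|w|^3)$ and closes with the \emph{nonlinear} Gr\"onwall inequality \eqref{2.3}; because $w(\cdot,0)=0$ (i.e.\ $c_3=0$) this still forces $w\equiv 0$, but a plain linear $L^2$ Gr\"onwall as you invoke is not sufficient without first controlling the superlinear term.
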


In the next assertion, selecting only more regular initial data, we
establish the local (in time) boundedness of a weak solution.
\begin{theorem}\label{Th-ex1}\verb"(Local Regularity)"
Let $d, D(x)$ and $a(x,t)$ meet the requirements of Theorem
\ref{Th-ex}, and the non-negative $ u_0(x) \in L^{\infty}(\Omega)$.
Then there are a positive time $T^{**}\in(0,T^{*}]$ and a positive
constant $C_{0}$ depending only on the corresponding norms of
 $ u_0(x), a(x,t);$  the quantities $d, d_{0}$ and the Lebesgue
 measure of
$\Omega$, such that a  nonnegative weak solution $u(x,t)$ of
\eqref{c-1}--\eqref{c-3} satisfies the bound
$$
\| u \|_{L^{\infty}(\Omega_{T^{**}})} \leqslant C_0.
$$
\end{theorem}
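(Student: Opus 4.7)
My strategy is to dominate the weak solution $u$ pointwise by a spatially--constant supersolution $v(t)$ arising from a scalar Bernoulli ODE, and to justify the comparison via a Stampacchia--type energy estimate on the difference $w:=u-v$.

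Set $M_{0}:=\|u_{0}\|_{L^{\infty}(\Omega)}$ and $A:=\|a\|_{L^{\infty}(\Omega_{\infty})}$, and let $v(t)$ be the solution of $v'=Av(1+v)$ with $v(0)=M_{0}$. Explicitly, $v(t)=M_{0}e^{At}/(1-M_{0}(e^{At}-1))$ is positive, increasing and finite on $[0,T_{\max})$ with $T_{\max}=A^{-1}\log(1+M_{0}^{-1})$. I then fix $T^{**}\in(0,\min\{T^{*},T_{\max}\})$ and declare $C_{0}:=v(T^{**})$. Since $|a(x,t)|\leq A$, a direct computation gives
\[
v_{t}-\nabla\!\cdot\!(D\nabla v)-a\,v(1-v)=v\bigl[(A-a)+v(A+a)\bigr]\geq 0,
\]
so $v$ is a classical supersolution with $v(0)=M_{0}\geq u_{0}$ a.e.

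Writing $w:=u-v\in V^{2}(\Omega_{T^{**}})$ with $\partial_{t}w\in L^{2}(0,T^{**};(H^{1}(\Omega))^{*})$, I test the weak equation satisfied by $w$ against $w_{+}$; this is legitimate by the usual truncation approximation together with the chain rule implicit in the embedding \eqref{i.1}, and $w_{+}(0)\equiv 0$. On $\{w>0\}$ one has $u=w_{+}+v\geq 0$, so the elementary pointwise bound $au(1-u)\leq A\,u(1+u)$ (obtained by separately treating $u\leq 1$ and $u\geq 1$) together with the identity $u(1+u)-v(1+v)=(u-v)(1+u+v)$ yields
\[
\tfrac{1}{2}\tfrac{d}{dt}\|w_{+}\|^{2}+d_{0}\|\nabla w_{+}\|^{2}\leq A\bigl(1+2v(t)\bigr)\|w_{+}\|^{2}+A\|w_{+}\|_{L^{3}(\Omega)}^{3}.
\]

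The cubic term is controlled by the Gagliardo--Nirenberg inequality \eqref{g-n-0} with $p=3$ (valid precisely because $d\leq 3$) followed by Young's inequality, which absorbs the gradient contribution into the coercive term $d_{0}\|\nabla w_{+}\|^{2}$. Setting $y(t):=\|w_{+}(t)\|^{2}$, this reduces matters to a scalar inequality of the form
\[
y'(t)\leq C_{1}(t)\,y(t)+C_{2}\bigl(y(t)^{3/2}+y(t)^{\beta/2}\bigr),\qquad y(0)=0,
\]
with $\beta=2(6-d)/(4-d)>2$ and $C_{1}\in L^{\infty}(0,T^{**})$. Because $3/2,\beta/2>1$, the right--hand side is locally Lipschitz at $y=0$ (Osgood's criterion); a comparison with the trivial solution --- equivalently, applying the nonlinear Gronwall estimate \eqref{2.3} with perturbed initial datum $c_{3}=\varepsilon$ and letting $\varepsilon\to 0^{+}$ --- forces $y\equiv 0$. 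Hence $u\leq v$ a.e.\ in $\Omega_{T^{**}}$, which combined with $u\geq 0$ from Theorem \ref{Th-ex} delivers $\|u\|_{L^{\infty}(\Omega_{T^{**}})}\leq v(T^{**})=C_{0}$.

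\textbf{Main obstacle.} The delicate point is taming the cubic $\|w_{+}\|_{L^{3}}^{3}$ produced by the logistic reaction: it is exactly this term which forces the dimensional restriction $d\leq 3$ (so that $H^{1}\hookrightarrow L^{3}$) and obliges us to balance Gagliardo--Nirenberg against Young in just the right proportion so as to absorb $\|\nabla w_{+}\|^{2}$ on the left, while preserving a superlinear scalar ODE whose only nonnegative solution starting at $0$ is the trivial one. Once this balance is achieved, the comparison to $v$ is essentially automatic.
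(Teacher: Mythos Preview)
Your argument is correct, and it is genuinely different from the paper's. The paper proves Theorem~\ref{Th-ex1} by a De~Giorgi--Stampacchia level--set iteration: it introduces the truncations $v_{k}=(u-k)_{+}$ and the measures $f_{\tau}(k)=\int_{0}^{\tau}|\{u(\cdot,t)\geq k\}|\,dt$, derives an energy estimate for $v_{k}$ that (after the parabolic embedding \eqref{2.0*}) yields a recursion $f_{\tau}(k_{2})\leq Ck^{\alpha}(k_{2}-k)^{-\alpha}f_{\tau}(k)^{\beta}$, and then invokes a tailored Stampacchia lemma (Lemma~\ref{L-1}) to conclude $f_{\tau}(k)=0$ for $k$ large and $\tau\leq T^{**}$. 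Your route bypasses all of this by comparing $u$ directly with the explicit ODE supersolution $v(t)$; the only analytic work is the same Gagliardo--Nirenberg/Young balancing (also used in Section~\ref{s4.2}) to absorb the cubic, after which the superlinearity of the residual scalar inequality forces $\|w_{+}\|^{2}\equiv 0$.

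What each buys: your approach is shorter, gives an explicit $C_{0}=v(T^{**})$ and an explicit upper bound $T_{\max}=A^{-1}\log(1+M_{0}^{-1})$ on the admissible time, and makes the role of the logistic nonlinearity transparent. The paper's level--set machinery is heavier here but is reusable: essentially the same computation, with the classical Stampacchia lemma replacing Lemma~\ref{L-1}, gives the \emph{global} $L^{\infty}$ bound of Theorem~\ref{t3} when $a\geq 0$, whereas a comparison with your Bernoulli supersolution would still blow up in finite time regardless of the sign of $a$.
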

The following results are related with sharper local estimates of a
weak solution whenever some additional assumptions on the initial
data are done.

\begin{theorem}\label{Th-2}\verb"(Higher Regularity)"
Let the following estimates hold
\begin{equation}\label{i.2}
0 \leqslant u_0(x) \leqslant 1  \qquad\text{and} \quad   \int
\limits_{\Omega}{(|\ln(u_0)| + | \ln(1-u_0)| ) \,dx} < \infty.
\end{equation}
Then, under assumptions of Theorem \ref{Th-ex}, a nonnegative weak
solution constructed in Theorem \ref{Th-ex} satisfies the estimates
\begin{align}\label{entr-e}\notag
&0 \leqslant u(x,t) \leqslant 1 \qquad\text{ a.\,e. in } \Omega_{T^*},\\
&\int \limits_{\Omega}{( |\ln u(x,t) | + | \ln(1-u(x,t) )| ) \,dx} <
\infty
\end{align}
for all $t\in [0,T^*]$. Besides, $\ln u$ and $ \ln(1-u)$ belong to
$L^2 (0, T^*; H^1(\Omega))$.
\end{theorem}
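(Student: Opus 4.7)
The proof splits into three independent pieces: the upper bound $u\le 1$ (the lower bound $u\ge 0$ is already supplied by Theorem~\ref{Th-ex}); the entropy-dissipation bound for $\ln u$; and the symmetric bound for $\ln(1-u)$. Because $0\le u_0\le 1$ in particular puts $u_0$ in $L^\infty(\Omega)$, Theorem~\ref{Th-ex1} already furnishes an a priori local $L^\infty$-bound on the weak solution $u$, which will be used to control the reaction term. All of the formal computations below are first to be carried out on the Galerkin approximations introduced in the proof of Theorem~\ref{Th-ex}, where $\partial_t u_m\in L^2(\Omega_{T^*})$ and every test function is admissible, and then transferred to $u$ by weak/strong convergence and lower-semicontinuity arguments.

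\textbf{Establishing $u\le 1$.} Test \eqref{ident} with the admissible $\phi=(u-1)_+\in L^2(0,T^*;H^1(\Omega))$; the standard chain-rule identity $\int_0^t\langle\partial_s u,(u-1)_+\rangle\,ds = \tfrac12\|(u-1)_+(t)\|^2$ applies since $(u_0-1)_+\equiv 0$. The reaction term rewrites as
\[
\int_0^t\!\!\int_\Omega a\,u(1-u)(u-1)_+\,dx\,ds = -\int_0^t\!\!\int_\Omega a\,u\bigl((u-1)_+\bigr)^2\,dx\,ds,
\]
bounded in modulus by $\|a\|_{L^\infty(\Omega_\infty)}\,\|u\|_{L^\infty(\Omega_{T^*})}\int_0^t\|(u-1)_+(s)\|^2\,ds$. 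Dropping the nonnegative diffusion term and applying Gronwall's lemma forces $\|(u-1)_+(t)\|\equiv 0$, i.e.\ $u\le 1$ a.e.\ in $\Omega_{T^*}$.

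\textbf{Entropy estimate for $\ln u$.} For each $\varepsilon>0$, test \eqref{ident} with $\phi_\varepsilon=-(u+\varepsilon)^{-1}$, which lies in $L^2(0,T^*;H^1(\Omega))\cap L^\infty$ thanks to $u\in L^\infty$. The chain rule produces
\[
-\!\int_\Omega\!\ln\!\bigl(u(t)+\varepsilon\bigr)dx+\!\int_0^t\!\!\int_\Omega\!\frac{D|\nabla u|^2}{(u+\varepsilon)^2}dxds=-\!\int_\Omega\!\ln(u_0+\varepsilon)dx-\!\int_0^t\!\!\int_\Omega\!\frac{a\,u(1-u)}{u+\varepsilon}dxds,
\]
and the gradient term equals $\int_0^t\!\int D|\nabla\ln(u+\varepsilon)|^2\,dx\,ds$. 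The reaction integrand is uniformly controlled via $0\le u(1-u)/(u+\varepsilon)\le 1$ (courtesy of the $u\le 1$ step); the initial term obeys $-\int\ln(u_0+\varepsilon)\,dx\le\int|\ln u_0|\,dx<\infty$ by hypothesis \eqref{i.2}; and $-\int\ln(u(t)+\varepsilon)\,dx\ge -\ln(1+\varepsilon)|\Omega|$ is bounded below. Combining these observations yields a bound independent of $\varepsilon$,
\[
\sup_{t\in[0,T^*]}\int_\Omega\bigl(-\ln(u(t)+\varepsilon)\bigr)\,dx + \int_0^{T^*}\!\!\int_\Omega D\,|\nabla\ln(u+\varepsilon)|^2\,dx\,ds \le C.
\]

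\textbf{Passage $\varepsilon\to 0^+$ and the symmetric argument.} Fatou's lemma applied to the nonnegative function $-\ln(u+\varepsilon)+\ln(1+\varepsilon)$ yields the integrability of $|\ln u(t)|$ in \eqref{entr-e}; this in particular forces $u>0$ a.e., so $\nabla\ln(u+\varepsilon)=\nabla u/(u+\varepsilon)\to\nabla\ln u$ a.e., and a second application of Fatou delivers $\sqrt{D}\,\nabla\ln u\in L^2(\Omega_{T^*})$. Combining with the uniform $L^1$-bound on $\ln u(t)$ and the Poincaré inequality then upgrades this to $\ln u\in L^2(0,T^*;H^1(\Omega))$. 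For $\ln(1-u)$ one repeats the whole procedure with the admissible test function $\phi_\varepsilon=(1-u+\varepsilon)^{-1}$ (equivalently, by setting $w=1-u$ which solves the same KPP equation with $a$ replaced by $-a$) together with the remaining part of \eqref{i.2}, giving the symmetric estimates. The principal technical obstacle is the rigorous justification of the two chain-rule identities when $\partial_t u$ only belongs to the dual $L^2(0,T^*;(H^1(\Omega))^*)$ and of the closure of the dissipation under $\varepsilon\to 0^+$; both difficulties are resolved by performing every computation on the Galerkin approximations from Theorem~\ref{Th-ex}, where the corresponding time derivatives are genuine $L^2$-functions, and transferring the resulting uniform estimates to the limit by lower semicontinuity.
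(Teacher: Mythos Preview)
Your overall strategy is sound and in fact more elementary than the paper's. The paper does \emph{not} separate the bound $u\le1$ from the entropy argument: it builds a smooth regularized entropy $G_\varepsilon$ with $G_\varepsilon''(z)=(\varepsilon+(1-z)^2)^{-1}$ (globally bounded on $\mathbb{R}$), approximates the initial data by $u_0^\varepsilon$ with $0\le u_0^\varepsilon\le 1-\varepsilon^\delta$, solves the corresponding approximating problems, and then extracts $u\le1$ from the entropy bound itself via $G_\varepsilon(z)\ge(z-1)^2/(2\varepsilon)$ for $z>1$. Your route---truncation with $(u-1)_+$ first, then the simpler test functions $-(u+\varepsilon)^{-1}$ and $(1-u+\varepsilon)^{-1}$ applied directly to the limit solution---is cleaner precisely because, once $0\le u\le1$ is in hand, the reaction term $au(1-u)/(u+\varepsilon)$ is trivially bounded; the paper needs its elaborate $G_\varepsilon$ exactly because it does not yet have that bound when it tests.

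Two technical slips, however, need repair. First, Theorem~\ref{Th-ex1} only supplies the $L^\infty$-bound on $[0,T^{**}]$ with possibly $T^{**}<T^*$, so invoking $\|u\|_{L^\infty(\Omega_{T^*})}$ in the $(u-1)_+$ step is unjustified. The fix is to mirror Section~\ref{s4.3}: on the support of $(u-1)_+$ one has $u=1+(u-1)_+$, the reaction contributes at most $\|a\|_\infty\int(|(u-1)_+|^2+|(u-1)_+|^3)$, and the nonlinear Gr\"onwall inequality \eqref{2.3} with zero initial datum forces $(u-1)_+\equiv0$ on all of $[0,T^*]$ without any $L^\infty$ input. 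Second, your proposed Galerkin justification of the entropy identities does not work: the approximants $u^N$ of Section~\ref{s4.1} are not known to lie in $[0,1]$ (nonnegativity is proved only at the limit in Section~\ref{s4.3}), so $-(u^N+\varepsilon)^{-1}$ can be singular, and in any case nonlinear functions of $u^N$ are not admissible test functions in $\Pi_N$. The correct justification is at the limit level: once $0\le u\le1$ is established, $s\mapsto -(s+\varepsilon)^{-1}$ is Lipschitz on $[0,1]$ (extend it linearly outside), and the standard chain rule for $u\in L^2(0,T^*;H^1(\Omega))$ with $\partial_t u\in L^2(0,T^*;(H^1(\Omega))^*)$ applies directly---no Galerkin detour is needed. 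This is in fact why the paper regularizes both the entropy and the initial data rather than working at the Galerkin level.
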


As for the global (in time) weak solvability of
\eqref{c-1}--\eqref{c-3} (that is a weak solvability for any fixed
positive $T$), this occurs if
 the coefficient $a(x,t)$ is nonnegative or $u_{0}(x)$ satisfies assumptions of Theorem \ref{Th-2}.

The first claim which is a simple consequence Theorems \ref{Th-ex}
and \ref{Th-2} concerns the global solvability if $u_{0}$ has
additional regularity.
\begin{corollary}\label{c-4}
Let $a(x,t),$ $D(x)$ and $d$ meet requirements of Theorem
\ref{Th-ex}. Then, for any positive $T$, under assumptions
\eqref{i.2}, problem \eqref{c-1}--\eqref{c-3} admits a unique
nonnegative global weak solution satisfying \eqref{entr-e}.
\end{corollary}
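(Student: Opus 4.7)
The plan is a standard continuation/extension argument, exploiting the fact that the pointwise bound $0\le u\le 1$ and the finiteness of the logarithmic entropy (supplied by Theorem \ref{Th-2}) are propagated in time, and therefore furnish an a priori estimate that is independent of the existence time of the local solution. The key observation is that the class of admissible initial data in Theorem \ref{Th-2}, namely
\[
\mathfrak{A}=\Bigl\{v\in L^{\infty}(\Omega):\ 0\le v\le 1,\ \textstyle\int_{\Omega}\bigl(|\ln v|+|\ln(1-v)|\bigr)\,dx<\infty\Bigr\},
\]
is invariant under the evolution governed by \eqref{c-1}--\eqref{c-3}.

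First, I would apply Theorem \ref{Th-ex} with the given $u_{0}\in\mathfrak{A}\subset L^{2}(\Omega)$ to obtain a unique nonnegative weak solution $u$ on some interval $[0,T^{*}]$. Since $u_{0}$ additionally satisfies \eqref{i.2}, Theorem \ref{Th-2} applies and yields, for every $t\in[0,T^{*}]$, the inclusion $u(\cdot,t)\in\mathfrak{A}$ together with the integrability of $\ln u$ and $\ln(1-u)$ in $L^{2}(0,T^{*};H^{1}(\Omega))$. In particular, $u(\cdot,T^{*})\in\mathfrak{A}$ is an admissible initial datum for a restart.

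Next, I would iterate. Taking $u(\cdot,T^{*})$ as a new initial datum, Theorems \ref{Th-ex} and \ref{Th-2} deliver a nonnegative weak solution on $[T^{*},T^{*}+T^{**}_{1}]$ that again lies in $\mathfrak{A}$ at the endpoint. The crucial point is that the length $T^{**}_{1}$ of this second interval depends only on $\|u(\cdot,T^{*})\|$, $\|a\|_{L^{\infty}(\Omega_{\infty})}$, $|\Omega|$, $d$ and $d_{0}$. Because $0\le u(\cdot,T^{*})\le 1$, we have the uniform bound $\|u(\cdot,T^{*})\|\le|\Omega|^{1/2}$, so there exists $\tau>0$, depending only on $\|a\|_{L^{\infty}(\Omega_{\infty})}$, $|\Omega|$, $d$ and $d_{0}$, such that each restart can be carried out on an interval of length at least $\tau$. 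Concatenating finitely many (at most $\lceil T/\tau\rceil$) such local solutions yields a weak solution defined on the full interval $[0,T]$ satisfying \eqref{entr-e} on every subinterval, hence on the whole of $\Omega_{T}$. Uniqueness on $[0,T]$ follows from the local uniqueness in Theorem \ref{Th-ex} applied successively on each subinterval, since two global solutions coinciding at the beginning of a subinterval must coincide throughout it.

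The only substantive point to verify is the existence of the uniform continuation step $\tau$, i.e., that the local existence time provided by Theorem \ref{Th-ex} can be bounded below in terms of $\|u(\cdot,T^{*})\|$ alone rather than in terms of finer norms of $u(\cdot,T^{*})$; this is precisely the content of the dependence recorded in the statement of Theorem \ref{Th-ex} and is the only place where the apparent ``local'' nature of the results could obstruct extension. Once this is observed, the argument reduces to a routine concatenation and no further estimates are needed.
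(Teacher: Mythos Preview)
Your proposal is correct and follows essentially the same continuation strategy as the paper: both use the a priori bound $0\le u\le 1$ and the entropy control from Theorem~\ref{Th-2} to extend the local solution globally. The only difference is cosmetic---the paper phrases the key step as ``refining the energy estimate \eqref{2.4**} so that the restriction \eqref{2.4} on $T^{*}$ is removed'' (i.e., once $|u|\le 1$ the cubic term in the energy inequality is dominated by the quadratic one and the nonlinear Gr\"onwall argument becomes unnecessary), whereas you use the equivalent observation that $\|u(\cdot,t)\|\le|\Omega|^{1/2}$ gives a uniform lower bound on the restart time guaranteed by Theorem~\ref{Th-ex}.
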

The verification of  this statement is pretty standard and repeats
the mains steps of the proof of  Theorem \ref{Th-ex} with certain
modifications provided by Theorem \ref{Th-2}. Namely, exploiting
Theorem \ref{Th-2} allows one to refine the energy estimates stated
in Theorem \ref{Th-ex} (see \eqref{2.4**} in Section \ref{s4.2}),
which in tune removes the restriction on the choice of $T^{*}$ (see
\eqref{2.4}, Section \ref{s4.2}). After that, recasting the standard
extension arguments ends up with the global solvability established
in this corollary.

Next, outcome deals with the global solvability in the case of the
nonnegative coefficient $a(x,t)$.

\begin{theorem}\label{t3}\verb"(Global Solvability and Regularity)"
Let $d,$ $D(x)$ and a nonnegative $a(x,t)$ meet the requirements of
Theorem \ref{Th-ex}. Then the results of Theorems \ref{Th-ex} and
\ref{Th-ex1} hold for any positive fixed $T$, if only
 a nonnegative  $u_{0}\in L^{2}(\Omega)$ and $u_{0}\in L^{\infty}(\Omega)$, respectively.
\end{theorem}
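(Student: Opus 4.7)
The plan is to combine the local existence statements from Theorems \ref{Th-ex} and \ref{Th-ex1} with new global-in-time a priori estimates that exploit the sign assumption $a\geq 0$ together with the nonnegativity $u\geq 0$ already established in Theorem \ref{Th-ex}. The local theorems construct a solution on some interval $[0,T^{*}]$ (respectively, $[0,T^{**}]$); by using the sign conditions to replace the nonlinear-Gr\"{o}nwall step by a linear one, I would remove the upper bound on the existence time and then cover the arbitrarily fixed $[0,T]$ by finitely many local extensions.

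For the $L^{2}$ part, I would test \eqref{c-1} with $u$ itself. The reaction contribution is $\int_{\Omega} a\,u^{2}(1-u)\,dx$; since $u\geq 0$ and $a\geq 0$, and the scalar function $s\mapsto s^{2}(1-s)$ is bounded above by $\tfrac{4}{27}$ on $[0,1]$ and is nonpositive on $[1,+\infty)$, one has
\begin{equation*}
\frac{d}{dt}\|u(t)\|^{2} + 2d_{0}\|\nabla u(t)\|^{2} \leq \tfrac{8}{27}\,\|a\|_{L^{\infty}(\Omega_{\infty})}\,|\Omega|,
\end{equation*}
which integrates to the linear-in-time bound $\|u(t)\|^{2}\leq \|u_{0}\|^{2}+Ct$ on the whole of $[0,T]$, with $C$ independent of $t$ and of the solution. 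This is precisely the refinement flagged in the remark following Corollary \ref{c-4}: it replaces the blow-up-type estimate behind the restriction on $T^{*}$ in Theorem \ref{Th-ex}.

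For the $L^{\infty}$ part, the plan is a Stampacchia truncation against the constant supersolution $M:=\max(1,\|u_{0}\|_{L^{\infty}(\Omega)})$. Testing the equation with $(u-M)_{+}\in L^{2}(0,T;H^{1}(\Omega))$ and using that on the set $\{u>M\geq 1\}$ the reaction term $a\,u(1-u)$ is nonpositive, I obtain $\tfrac{d}{dt}\|(u-M)_{+}\|^{2}\leq 0$; since $(u_{0}-M)_{+}\equiv 0$, this gives $u(x,t)\leq M$ almost everywhere in $\Omega_{T}$, which together with the nonnegativity from Theorem \ref{Th-ex} yields the global $L^{\infty}$ bound needed for the conclusion of Theorem \ref{Th-ex1} on $[0,T]$.

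Equipped with these global a priori bounds, the extension is routine. Since the local existence time in Theorem \ref{Th-ex} depends only on $\|u(\cdot,t_{0})\|$ (and on data that are independent of the restart point $t_{0}$), and the linear-in-time estimate above ensures this norm remains bounded on $[0,T]$, the local existence time stays bounded below uniformly in $t_{0}$, so finitely many restart steps suffice. Uniqueness from Theorem \ref{Th-ex} guarantees that the concatenated pieces agree at the gluing times. The main delicate point I anticipate is precisely this quantitative control of the local existence time in terms of the $L^{2}$ (respectively $L^{\infty}$) norm of the restart data; once that is made explicit, no nonlinear Gr\"{o}nwall inequality is needed, and Theorems \ref{Th-ex} and \ref{Th-ex1} propagate all the way to $T$.
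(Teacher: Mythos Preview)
Your proposal is correct, and it takes a genuinely more elementary route than the paper.

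For the $L^{2}$ bound, the paper tests with $u$ but keeps the cubic term $\int\!\!\int a u^{3}$ on the left-hand side (using $a\geq 0$, $u\geq 0$) and controls $\int\!\!\int a u^{2}$ via Young's inequality $au^{2}\leq \tfrac{1}{3}a+\tfrac{2}{3}au^{3}$, absorbing the cubic back. Your pointwise observation $s^{2}(1-s)\leq\tfrac{4}{27}$ for $s\geq 0$ bypasses this algebra entirely and yields the same linear-in-time estimate with a single line.

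For the $L^{\infty}$ bound, the paper reruns the Stampacchia truncation of Section~\ref{s5}, now with the classical Stampacchia lemma instead of Lemma~\ref{L-1}, and obtains a bound of the form $k_{0}+C(|\Omega|T)^{1/(d+2)}$ that grows with $T$. Your comparison with the constant supersolution $M=\max(1,\|u_{0}\|_{L^{\infty}})$ is much shorter and in fact sharper: it gives the $T$-independent bound $0\leq u\leq M$, which is exactly what one expects from the logistic structure once $a\geq 0$. The only thing to make explicit is that $(u-M)_{+}\in L^{2}(0,T;H^{1}(\Omega))$ is an admissible test function and that the chain rule $\langle \partial_{t}u,(u-M)_{+}\rangle=\tfrac{1}{2}\tfrac{d}{dt}\|(u-M)_{+}\|^{2}$ holds at the regularity level of Definition~\ref{weak-sol}; both are standard for $V^{2}$ solutions with $\partial_{t}u\in L^{2}(0,T;(H^{1})^{*})$.

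What the paper's approach buys is uniformity of method: the same Stampacchia machinery handles both the sign-free local case (Theorem~\ref{Th-ex1}) and the global case, so no new idea is introduced in Section~\ref{s7}. Your approach trades that uniformity for a cleaner argument and a better constant.
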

\begin{remark}
The proof of this claim (see Section \ref{s7}) tells that the
assumption on the regularity of $a(x,t)$ in Theorem \ref{t3} can be
relaxed. Namely,  the requirement of a nonnegative $a\in
L^{\infty}(\Omega_{T})$ for any fixed positive $T$ provides the
fulfillment of this theorem.
\end{remark}

\begin{remark}\label{r0}
Theorem \ref{t3} (more precisely, the first inequalities in
\eqref{entr-e} and \eqref{i.2}, respectively) suggests that the
biologically relevant domain
\[
\mathfrak{R}=\{u\in L^{2}(\Omega):\, 0\leq u(x,t)\leq 1\}
\]
is invariant for evolution system \eqref{c-1}-\eqref{c-3}. Indeed,
if we take any biologically meaningful initial data
$u_{0}\in\mathfrak{R}$, then each weak solution of
\eqref{c-1}-\eqref{c-3} starting from this $u_{0}$ remains in
$\mathfrak{R}$ for each $t\in[0,T]$ with every fixed positive time
$T.$
\end{remark}

In fine, Theorems~\ref{Th-2} and \ref{t3} provide the following very
useful property of a nonnegative weak solution
 which plays a key role to study the uniqueness of the optimal control in Section \ref{s8}.

\begin{corollary}\label{R-1}
Let $a(x,t)$ and $D(x)$ meet the requirements of Theorem
\ref{Th-ex}, and let \eqref{i.2} hold. Then
$$\mes \{(x,t)\in\Omega_{T}:\, u = 0\}
= 0\quad\text{and}\quad\mes \{(x,t)\in\Omega_{T}:\, u = 1\}  =0,
$$
where  $T>0$ is any but fixed in the case of a nonnegative
$a(x,t),$ otherwise  $0<T\leq T^{*}$.
\end{corollary}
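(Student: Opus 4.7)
The plan is to read off both measure-zero conclusions directly from the entropy-type bound already available under hypothesis \eqref{i.2}. Indeed, Theorem~\ref{Th-2} (and, for the global regime when $a\ge 0$, Corollary~\ref{c-4}) asserts that the nonnegative weak solution $u$ satisfies
$$
\int_{\Omega}\bigl(|\ln u(x,t)| + |\ln(1-u(x,t))|\bigr)\,dx < \infty
$$
for every $t$ in the corresponding interval $[0,T]$, with $T$ arbitrary whenever $a\ge 0$ and $T\le T^{*}$ otherwise. This pointwise-in-$t$ finiteness is the only ingredient I would really use.

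First, for each fixed $t\in[0,T]$ I introduce the slices
$$
E_{t}=\{x\in\Omega:\, u(x,t)=0\},\qquad F_{t}=\{x\in\Omega:\, u(x,t)=1\}.
$$
These sets are $\mathcal{L}^{d}$-measurable, since $u(\cdot,t)\in L^{\infty}(\Omega)$ by Theorem~\ref{Th-2}. On $E_{t}$ one has $|\ln u(x,t)|=+\infty$, so if $\mes_{\Omega}E_{t}>0$ the entropy integral would diverge, a contradiction; hence $\mes_{\Omega}E_{t}=0$. Replacing $u$ by $1-u$ in this argument (and using the second term of the entropy) gives $\mes_{\Omega}F_{t}=0$ as well.

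Next, since $u:\Omega_{T}\to\mathbb{R}$ is measurable, the full level sets $\{(x,t)\in\Omega_{T}:\,u=0\}$ and $\{(x,t)\in\Omega_{T}:\,u=1\}$ are measurable as preimages of singletons. Applying Tonelli's theorem to their indicator functions, together with the previous step, yields
$$
\mes\{(x,t)\in\Omega_{T}:\,u=0\}=\int_{0}^{T}\mes_{\Omega}E_{t}\,dt=0,
$$
and analogously for $\{u=1\}$, which is exactly the claim. The dichotomy on $T$ announced in the statement is built-in: for a nonnegative $a$ the entropy estimate is available on $[0,T]$ with arbitrary $T>0$ via Corollary~\ref{c-4} (or Theorem~\ref{t3} combined with the extension argument used there), while in the sign-indefinite case we are limited to $[0,T^{*}]$ through Theorem~\ref{Th-2}.

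There is no serious obstacle: the heavy lifting has been done in proving the entropy bound in Theorem~\ref{Th-2}, and what remains is a soft measure-theoretic consequence. The only bookkeeping point to be careful with is the measurability of the level sets and the fact that the entropy bound holds for \emph{every} $t$ in the interval (not merely almost every $t$), which is precisely what \eqref{entr-e} provides.
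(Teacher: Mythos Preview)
Your proposal is correct and follows exactly the route the paper implicitly intends: the corollary is stated without proof as a direct consequence of Theorems~\ref{Th-2} and~\ref{t3} (via the entropy bound~\eqref{entr-e}), and your argument---finiteness of $\int_{\Omega}|\ln u|\,dx$ and $\int_{\Omega}|\ln(1-u)|\,dx$ forces the slices $E_t,F_t$ to be null, then Tonelli lifts this to $\Omega_T$---is precisely the unpacking the authors leave to the reader. One could equally invoke the stated fact that $\ln u,\,\ln(1-u)\in L^{2}(0,T;H^{1}(\Omega))$, which already gives $\ln u$ finite a.e.\ in $\Omega_T$ and hence $u\neq 0$ a.e., but your slice-by-slice version is equivalent and arguably cleaner.
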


The proof of these results are carried out in Sections
\ref{s4}-\ref{s7}.

\section{The Proof of Theorem~\ref{Th-ex}}\label{s4}

\noindent In order to prove this claim, we exploit the strategy
consisting in the $4^{\text{th}}$ main steps. In the first, we
construct approximate solution $u^{N}=u^{N}(x,t)$ via the Galerkin
approximation scheme which gives local in time solvability to the
Galerkin approximating equation corresponding to \eqref{c-1}. In the
second step, we obtain uniform estimates for $u^{N}$ on $[0,T^{*}]$,
allowing us to pass to the limit and to end up with the local weak
solvability of \eqref{c-1}-\eqref{c-3}. The third step is dedicated
to prove the nonnegativity of the constructed solution for
$t\in[0,T^{*}]$. The main tools on this way are the special test
function in \eqref{ident} and nonlinear Gr\"{o}nwall inequality
\eqref{2.3}. Finally, via the standard approach, we deduce the
uniqueness of the local weak solution built via the first two
stages.


\subsection{The Faedo-Galerkin approximation scheme}\label{s4.1}

Let $\lambda_k$ and $\varphi_k,$  $k=0,1,2,\ldots$, be  the sequence
of the eigenvalues and of the normalized eigenfunctions of the
operator $-\nabla (D(x)\nabla)$ in $\Omega$  associated with the
homogeneous Neumann boundary conditions, that is
 for every $k=0,1,2,\ldots,$ the pair $(\lambda_{k},\varphi_{k})$ is the unique
solution of the problem
\begin{equation*}\label{eq:EVN}
\begin{cases}
- \nabla ( D(x) \nabla \varphi_k ) = \lambda_k \varphi_k  & \text{ in }   \Omega ,\\
\nabla \varphi_k \cdot \textbf{n} = 0
 &  \text{ on } \partial\Omega,\\
 \|\varphi_k\|=1.
\end{cases}
\end{equation*}
We recall that  $\varphi_0=1/\sqrt{|\Omega|}$ and
\begin{equation*}\label{EVlim}
    \lambda_0=0<\lambda_1\leqslant \lambda_2\leqslant \ldots, \qquad \lim_{n\to+\infty}\lambda_n=+\infty.
\end{equation*}
As already observed, the system of the eigenfunctions $\{\varphi_i
\}_{i \in \mathbb{N}}$ is an orthonormal basis in $L^2( \Omega)$ and
an orthogonal system in $H^1(\Omega)$.

Now, we are ready to define  the approximate problems to
\eqref{c-1}--\eqref{c-3}. For $N\geqslant 0$ we set the
$(N+1)$-dimensional space
\begin{equation*}\label{eq:VN}
\Pi_N  =
\text{span}\left\{\varphi_0,\varphi_1,...,\varphi_{N}\right\}.
\end{equation*}
It is apparent that  $\Pi_N\subset H^{1}(\Omega)$,  $\Pi_M\subset
\Pi_N$ for $N > M$ and besides, $\bigcup_{N=0}^{+\infty}\Pi_N$ is
dense in $H^1(\Omega)$.

Denoting the projection of the initial datum $u_{0}$ on $\Pi_N$ by
\begin{equation*}\label{eq:proju0}
    u_{0}^N(x) =  \sum_{i=0}^{N}(u_{0},\varphi_i)\varphi_i(x),
\end{equation*}
 we look for unknown function $ u^N=u^{N}(x,t) \in W^{1,2}(0,T;\Pi_N) $ solving  the system
\begin{equation}\label{eq:discretized}
\begin{cases}
\int \limits_{\Omega}  \partial_t u^N v\, dx  + \int \limits_{\Omega} { D (x)  \nabla u^N  \nabla v \,dx }=
\int \limits_{\Omega} { a(x,t)  u^N (1 - u^N) v \,dx} ,\\
 u^{N}(\cdot,0) = u^N_{0}(\cdot)  \text{ in } \Omega
\end{cases}
\end{equation}
for any $t\in (0,T)$ and for any $v \in \Pi_N$.

Substituting $v=\varphi_i,$  $i=0,...,N,$ in (\ref{eq:discretized})
and performing standard technical calculations reduce problem
(\ref{eq:discretized}) to a  system of nonlinear ordinary
differential equations for the ''components'' of the vector  $u^N$
in $\Pi_N$ with respect to the chosen basis. More precisely, we
search functions $\psi_{i}=\psi_i(t)\in W^{1,2}(0,T)$  for
$i=0,1,..., N,$ such that the function
$$
u^{N}(x,t) = \sum \limits_{i =0}^{N} {\psi_i(t) \varphi_i(x)}
$$
solves (\ref{eq:discretized}).

It is apparent that,  problem (\ref{eq:discretized}) is equivalent
to the following Cauchy problem for a system of ordinary
differential equations for $\psi_{j},$ $j=0,1,...,N,$
\begin{equation}\label{ap-01}
\begin{cases}
 \frac{ d}{ d t} \psi_j(t)  =F_{j},
 \qquad  j = 0,1,..., N,\\
\psi_j(0)  = (u_0, \varphi_j),
\end{cases}
\end{equation}
where we  set
\begin{align*}
F_{j}=F_{j}(t,\psi_{0},\psi_{1},...,\psi_{N})&= - \lambda_j
\psi_j(t) + \sum \limits_{k
=0}^{N}\psi_k(t)\int\limits_{\Omega}a(x,t) \varphi_k(x)\varphi_j(x)
\, dx
\\
& -   \int\limits_{\Omega}  a(x,t) \Big( \sum \limits_{i =0}^{N}
{\psi_i(t) \varphi_i(x)} \Big)^2   \varphi_j(x) \, dx.
\end{align*}
Performing  straightforward calculations and bearing in mind  the
regularity of $a(x,t)$, we immediately conclude that
$F=\{F_{0},...,F_{N}\}$ is locally Lipschitz. Moreover, there exists
positive constant $L$ such that the inequality
$$|F| \leqslant L (|\psi| +|\psi|^2)$$
holds for any $\psi=\{\psi_{0},...,\psi_{N}\} \in \mathbb{R}^{N+1}$.

In summary, keeping in mind the properties of $F$ and appealing  to
the Picard-Lindel\"{o}f Theorem, we immediately end up with the
unique classical solvability of the system \eqref{ap-01}  on a time
interval $[0,T_{N}]$ and $\psi_{j}\in C^{1}([0,T_{N}])$ for
$j=0,...,N.$


\subsection{Energy estimates}\label{s4.2}

Now,  we aim to look for
 uniform estimates (i.e. being independent of $N$) of the solutions $u^{N}$, which
arrive at the local solvability of \eqref{c-1}--\eqref{c-3}. To this
end, bearing in mind the obtained properties of $c_{j}$,  exploiting
 system \eqref{eq:discretized} and appealing to assumption
\eqref{2.2}, we arrive at
\[
  \frac{1}{2} \frac{d}{dt} \| u^N(\cdot,t)\|^2 +d_{0} \int \limits_{\Omega} |\nabla u^N(x,t) |^2 \, dx
 \leq \|a\|_{L^\infty(\Omega_{\infty})} \|u^N (\cdot,t)\|^2
+\|a\|_{L^\infty(\Omega_{\infty})} \|u^N
(\cdot,t)\|^3_{L^3(\Omega)}.
\]
To manage the last term in the right-hand side, we first apply
interpolation inequality \eqref{g-n-0} with $p=3$ and
$\theta=\frac{d}{6},$ and then the sequential application of Jensen
and Young inequalities (appealing to  $d<4$) provides the estimate
\begin{align*}
 \frac{1}{2} \frac{d}{dt} \| u^N(\cdot,t) \|^2 +  d_{0} \| \nabla u^N (\cdot,t)\|^{2 }
 &\leq
 2c_{1}^{3}\epsilon^{4/d}\|a\|_{L^{\infty}(\Omega_{\infty})}\|\nabla
 u^{N}(\cdot,t)\|^{2}
+\|a\|_{L^{\infty}(\Omega_{\infty})}\Big[
\|u^{N}(\cdot,t\|)\|^{2}
\\
 &
+2c_{1}^{3}(4-d)\epsilon^{-\frac{4}{4-d}}\|u^{N}(\cdot,t)\|^{\frac{2(6-d)}{4-d}}
+8c_{2}^{3}\|u\|^{3} \Big]
\end{align*}
 with some positive $\epsilon$.

 Then, setting in this inequality
 \begin{align*}
 \epsilon&=\bigg(\frac{d_{0}}{4c_{1}^{3}\|a\|_{L^{\infty}(\Omega_{\infty})}}\bigg)^{d/4},\\
C&=2\max\bigg\{\|a\|_{L^{\infty}(\Omega_{\infty})},(4-d)2^{\frac{4-3d}{4-d}}c_{1}^{\frac{12}{4-d}}d_{0}^{-\frac{d}{4-d}}\|a\|_{
L^{\infty}(\Omega_{\infty})}^{\frac{d}{4-d}};
8\|a\|_{L^{\infty}(\Omega_{\infty})}c_{2}^{3} \bigg\},
 \end{align*}
 we end up with the bound
\[
\frac{d}{dt} \| u^N(\cdot,t) \|^2 +   \| \nabla u^N (\cdot,t)\|^{2 }
 \leq
 C[\|u^{N}(\cdot,t)\|^{2}+\|u^{N}(\cdot,t)\|^{3}+\|u^{N}(\cdot,t)\|^{\frac{2(6-d)}{4-d}}].
\]
Finally, thanks to $\frac{2(6-d)}{4-d}>3,$ we obtain the estimate
\[
\frac{d}{dt} \| u^N(\cdot,t) \|^2 +   \| \nabla u^N (\cdot,t)\|^{2 }
 \leq
 3C[1+\|u^{N}(\cdot,t)\|^{\frac{2(6-d)}{4-d}}],
\]
which in turn (after integrating over $[0,\tau]$) gives
\[
 \| u^N(\cdot,\tau) \|^2 +  \int_{0}^{\tau} \| \nabla u^N (\cdot,t)\|^{2
 }dt
 \leq
 3C\tau+ \| u^N_{0} \|^2
 +3C\int_{0}^{\tau}\|u^{N}(\cdot,t)\|^{\frac{2(6-d)}{4-d}}dt.
\]

At this point, exploiting
 the  nonlinear generalization of Gr\"{o}nwall inequality \eqref{2.3}
with $h=T_{N}^{*}$ satisfying  the inequality
\begin{equation}\label{2.4}
3C(T_{N}^{*})^{\frac{6-d}{2}}+(T_{N}^{*})^{\frac{4-d}{2}}\|u_{0}^{N}\|^{2}\leq\bigg(\frac{4-d}{6C}\bigg)^{\frac{4-d}{2}}
\end{equation}
and
\[
c_{3}=3CT_{N}^{*}+\|u_{0}^{N}\|^{2},\quad b(s)=3C,\quad
\gamma=\frac{6-d}{4-d},
\]
 we deduce the bound
\begin{align}\label{2.4**}\notag
\|u^{N}(\cdot,\tau)\|^{2}+\int_{0}^{\tau}\|\nabla
u^{N}(\cdot,t)\|^{2}dt&\leq
[3CT_{N}^{*}+\|u_{0}^{N}\|^{2}]^{\frac{d-2}{2}}\bigg[
3CT_{N}^{*}+\|u_{0}^{N}\|^{2}- \frac{6C\tau}{4-d}
 \bigg]^{\frac{4-d}{2}}\\
 &
< 3CT_{N}^{*}+\|u_{0}^{N}\|^{2}\equiv C_{3}(T_{N}^{*},u_{0}^{N})
\end{align}
for each $\tau\in[0,T_{N}^{*}]$.

\noindent Performing the straightforward calculations tells us that
\begin{equation*}\label{2.4*}
T_{N}^{*}=\min\bigg\{1;\tfrac{4-d}{C}[3C+\|u_{0}^{N}\|^{2}]^{-\frac{2}{4-d}};
\big[\tfrac{4-d}{C}\big]^{\frac{4-d}{6-d}}[3C+\|u_{0}^{N}\|^{2}]^{-\frac{2}{6-d}}
\bigg\}
\end{equation*}
satisfies inequality \eqref{2.4}.

Bearing in mind the strong convergence of $u_{0}^{N}$ to $u_{0}$ in
$L^{2}(\Omega)$ and passing to the limit in the expression of the
$T_{N}$, we get
\[
\underset{N\to+\infty}{\lim}
T_{N}^{*}=\min\bigg\{1;\tfrac{4-d}{C}[3C+\|u_{0}\|^{2}]^{-\frac{2}{4-d}};
\big[\tfrac{4-d}{C}\big]^{\frac{4-d}{6-d}}[3C+\|u_{0}\|^{2}]^{-\frac{2}{6-d}}\bigg\}\equiv
T_{0}.
\]
These relations and the explicit form of
$C_{3}(T_{N}^{*},u_{0}^{N})$ arrive at the bound
\[
C_{3}(T_{N}^{*},u_{0}^{N})\leq C_{3}(T_{0},u_{0})\equiv C_{3}
\]
for all $N>N_{0}>1$.
 Taking $N_0$ greater if necessary, we define the time of
the existence
$$
T^* = \frac{9}{10}T_{0}< T_N^*
$$
for all $N > N_0$.

Summing up, we end up with the estimate
\begin{equation*}\label{2.5}
\|u^{N}(\cdot,\tau)\|^{2}+\int_{0}^{\tau}\|\nabla
u^{N}(\cdot,t)\|^{2}dt\leq  C_{3}(T_{0},u_{0})
\end{equation*}
for each $\tau\in[0,T^{*}]$.

Moreover, this bound together with \eqref{g-n-0} means the existence
of  a constant $C^{*}$ being independent of $N$ and $T_{N}$ such
that
\begin{equation}\label{apr-002}
\|u^{N}\|_{V^{2}(\Omega_{T^{*}})} +
\|u^{N}\|_{L^{2}(0,T^{*};L^{4}(\Omega))} \leqslant C^{*}.
\end{equation}
Thus, to complete the uniform estimates of $u^{N}$, we are left to
evaluate
$\|\partial_{t}u^{N}\|_{L^{2}(0,T^{*};(H^{1}(\Omega))^{*})}$. To
this end, we fix $w\in H^{1}(\Omega)$ having
$\|w\|_{H^{1}(\Omega)}\leq 1$ and decompose
\[
w=v+v_{\perp}\quad\text{with} \quad v\in \Pi_{N}\quad\text{and}\quad
v_{\perp}\in \Pi_{N}^{\perp}
\]
with respect to the scalar product in $L^{2}(\Omega)$. This means
\[
\int_{\Omega}v_{\perp}\phi_{i}dx=0\quad
i=0,1,2,...,N,\quad\text{and}\quad \|v\|_{H^{1}(\Omega)}\leq 1.
\]
Exploiting the equation in \eqref{eq:discretized} and collecting the
properties of the functions $w,v$ with the regularity of
$D(x),a(x,t)$, we immediately conclude that
\begin{align}\label{2.5*}\notag
|\langle\partial_{t}u^{N}(\cdot,t),w\rangle|&\leq
\int_{\Omega}|D(x)||\nabla u^{N}(x,t)||\nabla v(x)|dx +
\int_{\Omega}|a(x,t)||u^{N}(x,t)||v(x)|dx
\\
 &
 +
\int_{\Omega}|a(x,t)||u^{N}(x,t)|^{2}|v(x)|dx \equiv
s_{1}(t)+s_{2}(t),
\end{align}
where we set
\begin{align*}
s_{1}(t)&=
 [\|D\|_{L^{\infty}(\Omega)}+\|a\|_{L^{\infty}(\Omega_{\infty})}]\bigg[
\int_{\Omega}|\nabla u^{N}(x,t)||\nabla v|dx +
\int_{\Omega}|u^{N}(x,t)||v(x)|dx
 \bigg],\\
 s_{2}(t)&=
 \|a\|_{L^{\infty}(\Omega_{\infty})}\int_{\Omega}|u^{N}(x,t)|^{2}|v(x)|dx.
\end{align*}

At this point, we treat each $s_{i}(t)$, separately. Collecting
H\"{o}lder inequality with the boundedness of $H^{1}-$norm of $v$
and \eqref{apr-002} arrives at  the bound
\[
s_{1}(t)\leq
[\|D\|_{L^{\infty}(\Omega)}+\|a\|_{L^{\infty}(\Omega_{\infty})}]\|u^{N}(\cdot,t)\|_{H^{1}(\Omega)}.
\]
Coming to the term $s_{2}(t)$ and employing  H\"{o}lder inequality
yield
\begin{align*}
s_{2}(t)&\leq \|a\|_{L^{\infty}(\Omega_{\infty})}\cdot
\begin{cases}
\|u^{N}(\cdot,t)\|^{2}_{L^{4}(\Omega)}\|v\|\qquad\qquad\text{if}\quad
d=1,\\
\|u^{N}(\cdot,t)\|^{2}_{L^{12/5}(\Omega)}\|v\|_{L^{6}(\Omega)}\quad\text{if}\quad
d=2,3.
\end{cases}\\
& \leq C\cdot \begin{cases}
\|u^{N}(\cdot,t)\|^{2}_{L^{4}(\Omega)}\qquad\text{if}\quad
d=1,\\
\|u^{N}(\cdot,t)\|^{2}_{L^{12/5}(\Omega)}\quad\text{if}\quad d=2,3.
\end{cases}
\end{align*}
Here, to get the last inequality we exploited the property of the
function $v$ and, in the case of $d=2,3,$ we first applied
\eqref{g-n-0}.

Finally,  estimates of $s_{i}(t)$ and  \eqref{2.5*} yield at the
bound
\begin{align*}
\|\partial_{t}u^{N}(\cdot,t)\|_{(H^{1}(\Omega))^{*}}&\leq
[\|D\|_{L^{\infty}(\Omega)}+\|a\|_{L^{\infty}(\Omega_{\infty})}]\|u^{N}(\cdot,t)\|_{H^{1}(\Omega)}\\
& +C \|a\|_{L^{\infty}(\Omega_{\infty})}\cdot
\begin{cases}
\|u^{N}(\cdot,t)\|^{2}_{L^{4}(\Omega)}\qquad\text{if}\quad
d=1,\\
\|u^{N}(\cdot,t)\|^{2}_{L^{12/5}(\Omega)}\quad\text{if}\quad d=2,3.
\end{cases}
\end{align*}

Now, keeping in mind this inequality and \eqref{apr-002}, we  end up
with the inequality
\begin{align*}
\|\partial_{t}u^{N}\|^{2}_{L^{2}(0,T^{*};(H^{1}(\Omega))^{*})}&\leq
T^{*}(C^{*})^{2}[\|D\|_{L^{\infty}(\Omega)}+\|a\|_{L^{\infty}(\Omega_{\infty})}]^{2}\\
&+ C\cdot
\begin{cases}
\int_{0}^{T^{*}}\|u^{N}(\cdot,t)\|^{4}_{L^{4}(\Omega)}dt\qquad\text{if}\quad
d=1,\\
\\
\int_{0}^{T^{*}}\|u^{N}(\cdot,t)\|^{4}_{L^{12/5}(\Omega)}dt\quad\text{if}\quad
d=2,3.
\end{cases}
\end{align*}
To handle the last term, we first utilize the H\"{o}lder inequality
and then apply the parabolic embedding \eqref{2.0} (in the
multi-dimensional case) and \eqref{2.0*} (in the one-dimensional
case). Namely,
\begin{align*}
\int_{0}^{T^{*}}\|u^{N}(\cdot,t)\|^{4}_{L^{4}(\Omega)}dt&\leq
(T^{*})^{\frac{1}{3}}\|u^{N}\|^{4}_{L^{6}(\Omega_{T^{*}})}
\\&
\leq
(|\Omega|T^{*})^{\frac{1}{3}}c^{4}_{0}(1+T^{*}|\Omega|^{-2})^{2/3}\|u^{N}\|^{4}_{V^{2}(\Omega_{T^{*}})}
\qquad\text{if}\quad
d=1,\\
\,
\\
\int_{0}^{T^{*}}\|u^{N}(\cdot,t)\|^{4}_{L^{12/5}(\Omega)}dt& \leq
\begin{cases}
(T^{*})^{2/3}\|u^{N}\|^{4}_{L^{12}(0,T^{*};L^{\frac{12}{5}}(\Omega))}\quad\text{
if}\quad
d=2,\\
\\
(T^{*})^{1/2}\|u^{N}\|^{4}_{L^{8}(0,T^{*};L^{\frac{12}{5}}(\Omega))}\qquad\text{if}\quad
d=3.
\end{cases}
\\
\,
\\
& \leq c^{4}_{0}\cdot
\begin{cases}
(T^{*})^{2/3}(1+T^{*}|\Omega|^{-1})^{1/3}\|u^{N}\|^{4}_{V^{2}(\Omega_{T^{*}})}\qquad\text{if}\quad
d=2,\\
\\
(T^{*})^{1/2}(1+T^{*}|\Omega|^{-2/3})^{1/2}\|u^{N}\|^{4}_{V^{2}(\Omega_{T^{*}})}\quad\text{
if}\quad d=3.
\end{cases}
\end{align*}
In fine, estimate \eqref{apr-002} completes the evaluation of
$\|\partial_{t}u^{N}\|^{2}_{L^{2}(0,T^{*};(H^{1}(\Omega))^{*})}$.

In summary, the obtained uniform bounds allow us to deduce
\begin{equation}\label{2.7}
\|u^{N}\|_{V^{2}(\Omega_{T^{*}})}+\|\partial_{t}u^{N}\|^{2}_{L^{2}(0,T^{*};(H^{1}(\Omega))^{*})}
+\|u^{N}\|_{L^{2}(0,T^{*};L^{4}(\Omega))} \leq C
\end{equation}
with the positive value $C$ depending only on the parameters in the
model \eqref{c-1}-\eqref{c-3}, and the cor\-res\-pon\-ding norms of
the given functions but being independent of $N$.


\subsection{Conclusion of the proof of a local weak solvability}\label{s4.2*}

Here, taking into account the results of Subsections
\ref{s4.1}-\ref{s4.2}, we extract a convergent subsequence and pass
to the limit in the equation in \eqref{eq:discretized}. Namely,
using the Banach-Alaoglu Theorem and estimate \eqref{2.7}, we can
utilize a standard arguments to select a subsequence of $u^{N}$
(which we relabel) such that, for $T^{*}>0$ and $N\to+\infty$,
\begin{align*}
&u^{N}\to u\qquad\quad\text{weakly}\quad\,\text{ in}\quad
L^{2}(0,T^{*};
H^{1}(\Omega)),\\
&u^{N}\to u\qquad\quad\text{strongly}\quad\text{in}\quad L^{2}(\Omega_{T^{*}})\quad\text{and a.e. in}\, \Omega_{T},\\
&u^{N}\to u\qquad\quad\text{weakly-}*\,\text{in}\quad
L^{\infty}(0,T^{*};L^{2}(\Omega)),\\
&\partial_{t}u^{N}\to \partial_{t}u\,\quad\text{weakly}\quad\,\text{
in}\quad L^{2}(0,T^{*};(H^{1}(\Omega))^{*})
\end{align*}
for some $u$ belonging to all the spaces above. Besides, \eqref{2.7}
and \eqref{i.1} arrive at the embedding $u\in
C([0,T^{*}];L^{2}(\Omega))$ and at the estimate \eqref{2.7} for $u$
in place $u^{N}$.

Therefore, the constructed $u$ satisfies the required regularity in
Definition \ref{weak-sol}, along with the initial condition.
Finally, we are left to verify the identity \eqref{ident}. To this
end, we fix $M\geq 1$ and, for every $N>M$ and any test function $v$
with the value in $\Pi_{M}$ ($\Pi_{M}\subset \Pi_{N}$), we have
$$
\int \limits_{\Omega_{T^{*}}} {  \partial_t u^N v\, dx dt}  + \int
\limits_{\Omega_{T^{*}}} { D (x)  \nabla u^N  \nabla v \,dx dt }=
\int \limits_{\Omega_{T^{*}}} { a(x,t)  u^N (1 - u^N) v \,dx dt}.
$$
Appealing to the above convergence and \eqref{2.7} readily implies
that
 $u$  verifies
\begin{equation}\label{eq:wf}
\int \limits_{\Omega_{T^{*}}} {  \partial_t u \, v\, dx dt}  + \int
\limits_{\Omega_{T^{*}}} { D (x)  \nabla u   \nabla v \,dx dt }=
\int \limits_{\Omega_{T^{*}}} { a(x,t)  u (1 - u ) v \,dx dt}
\end{equation}
for every test function $v$ with value in $\Pi_M$ and, hence, for
every test function with value in $\bigcup_{N=1}^{+\infty}\Pi_N$.
Since this union is dense in $H^1(\Omega)$, we have that the above
relations hold for any test function $v\in L^{2} ( 0,T^{*};
H^1(\Omega))$.
 This means that the $u(x,t)$ satisfies the identity (\ref{ident}).
That completes the proof of a local weak solvability of
 \eqref{c-1}--\eqref{c-3}. \qed

\subsection{Nonnegativity of a weak solution}\label{s4.3}
Let $u_{0}(x)\geq 0$ in $\Omega$ and set
\[
w(x,t)=\max\{-u(x,t),0\}.
\]
Multiplying equation \eqref{c-1} by the function $w=w(x,t)$ and
integrating over $\Omega_{\tau}=\Omega\times(0,\tau)$ with arbitrary
$\tau\in[0,T^{*}],$ we arrive at the inequality
\[
\int_{\Omega}w^{2}(x,\tau)dx+2d_{0}\int_{0}^{\tau}\int_{\Omega}|\nabla
w|^{2}dxdt\leq
\|a\|_{L^{\infty}(\Omega_{\infty})}\int_{0}^{\tau}\int_{\Omega}(|w|^{2}+|w|^{3})dxdt.
\]
Here we used the easily verified equality $w(x,0)=0$ a.e. in
$\Omega$.

Next, appealing to Gr\"{o}nwal type inequality \eqref{2.3}, we end
up with the bound
\[
\int_{\Omega}w^{2}(x,\tau)dx\leq 0
\]
for each $\tau\in[0,T^{*}]$, which tells that  $w(x,t)=0$ a.e. in
$\Omega_{T^{*}}$. Coming to the definition of $w(x,t),$ we
immediately conclude that $u(x,t)\geq 0$ a.e. in $\Omega_{T^{*}}$.
That completes the proof of the nonnegativity of a weak solution.
\qed

\subsection{Uniqueness of a weak solution}\label{s4.4}
The arguments exploited to demonstrate the uniqueness of a weak
solution to \eqref{c-1}--\eqref{c-3} is pretty standard. We assume
the existence of two weak solutions $u$ and $\bar{u}$ to
\eqref{c-1}-\eqref{c-3}, which have the regularity obtained in
Subsection \ref{s4.2}, in particulary, $u,\bar{u}\in
C([0,T^{*}],L^{2}(\Omega)).$ Then the difference $U=u-\bar{u}$
satisfies the following relations in the sense of Definition
\ref{weak-sol}
\[
\begin{cases}
U_{t}-\nabla D(x)\nabla U=aU[1-u-\bar{u}]\quad\text{in}\quad
\Omega_{T^{*}},\\
\nabla U\cdot \mathbf{n}=0\qquad\qquad \text{on}\quad
\partial\Omega_{T^{*}},\\
U(x,0)=0\qquad\qquad\text{in}\quad\Omega.
\end{cases}
\]
Multiplying the equation in this system by $U$ and integrating over
$\Omega_{\tau}$ with arbitrary $\tau\in[0,T^{*}]$, we obtain
\[
\frac{1}{2}\int_{\Omega}U^{2}(x,\tau)dx+d_{0}\int_{0}^{\tau}\int_{\Omega}|\nabla
U|^{2}dx dt\leq
\|a\|_{L^{\infty}(\Omega_{\infty})}\int_{0}^{\tau}\int_{\Omega}U^{2}[1+|u|+|\bar{u}|]dxdt.
\]
Here, we used assumption \eqref{2.2} and the regularity of the
coefficient $a(x,t).$

To manage the right-hand side, we apply H\"{o}lder inequality and
the smoothness of $u$ and $\bar{u}.$ Thus, we deduce that
\begin{align*}
&\int_{\Omega}U^{2}(x,\tau)dx+2d_{0}\int_{0}^{\tau}\int_{\Omega}|\nabla
U|^{2}dt dx\\
& \leq \|a\|_{L^{\infty}(\Omega_{\infty})}
[1+\|u\|_{C([0,T^{*}],L^{2}(\Omega))}+\|\bar{u}\|_{C([0,T^{*}],L^{2}(\Omega))}]
\int_{0}^{\tau}\|U\|^{2}_{L^{4}(\Omega)}dt.
\end{align*}
Then, appealing to inequality \eqref{g-n-0} with $p=4$ and
$\theta=d/4,$ and performing technical calculations, we end up with
the estimate
\[
\|U\|^{2}_{L^{4}(\Omega)}\leq \varepsilon\|\nabla
U\|^{2}+C(\varepsilon)\|U\|^{2}
\]
 with some positive $\varepsilon<2d_{0}$, which in turn provides the bound
\[
\int_{\Omega}U^{2}(x,\tau)dx\leq C
\|a\|_{L^{\infty}(\Omega_{\infty})}
[1+\|u\|_{C([0,T^{*}],L^{2}(\Omega))}+\|\bar{u}\|_{C([0,T^{*}],L^{2}(\Omega))}]
\int_{0}^{\tau}\|U\|^{2}_{L^{2}(\Omega)}dt.
\]
In fine, utilizing the Gr\"{o}nwall lemma \cite{Gr} arrives at the
inequality
\[
\int_{\Omega}U^{2}(x,\tau)dx\leq 0
\]
for each $\tau\in[0,T^{*}],$ which finishes the proof of the
uniqueness. \qed

\section{Proof of Theorem \ref{Th-ex1}}\label{s5}

\noindent In the one-dimensional case, collecting Sobolev embedding
theorem (see, e.g. \cite[Section 5.4]{A}) with \cite[Theorem
3.1]{LM} allows one to readily reach the boundedness of a solution
$u$ in $\Omega_{T^{**}}$, exploiting only the regularity of a weak
solution stated in Theorem \ref{Th-ex}. Thus, we are left to verify
Theorem \ref{Th-ex1} in the multi-dimensional case ($d=2,3$). To this
end, bearing in mind the assumption on $u_{0}(x)$ and denoting
\[
k_{0}=\max\{1,\|u_{0}\|_{L^{\infty}(\Omega)}\},
\]
we introduce the function $v_{k}(x,t)$ and the sets $A_{k}(t)$ and
$f_{t}(k)$ for any $k\geq k_{0}$ and $t\leq T^{*}:$
\[
v_{k}=v_{k}(x,t)=\max\{u(x,t)-k,0\},\quad
A_{k}(t)=\{x\in\Omega:\quad u(x,t)\geq k\},\quad
f_{t}(k)=\int_{0}^{t}|A_{k}(\tau)|d\tau.
\]
Performing the straightforward calculations leads to the following
properties of the introduced function and sets.
\begin{corollary}\label{c3.1}
There hold:

\noindent(i)  $A_{k_{2}}(t)\subset A_{k_{1}}(t)$ for any
 $k_{2}>k_{1}\geq k_{0}$ and each $t\in[0,T^{*}];$

\noindent(ii) $f_{t}(k_{0})\leq |\Omega|t$ for each $t\in[0,T^{*}];$

\noindent(iii) $v_{k}(x,0)=0$ for any $x\in\bar{\Omega}$ and
$$\int_{A_{k}(t)}u^{2}(x,t)v_{k}(x,t)dx\leq \int_{A_{k}(t)} [u(x,t)
v_{k}^{2}(x,t)+k u(x,t)v_{k}(x,t)]dx$$
 for each $t\in [0,T^{*}],$ where
$u$ is a weak solution constructed in Theorem \ref{Th-ex}.
\end{corollary}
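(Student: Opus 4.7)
The plan is to verify the three items of Corollary \ref{c3.1} by direct pointwise computation, since each of them reduces to an elementary manipulation of the super-level sets $A_{k}(t)$ and the truncation $v_{k}$. Nothing subtle is involved, but I do want to be careful with the definition of $k_{0}$ and with the identity $u=v_{k}+k$ that holds precisely on $A_{k}(t)$.

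For item (i), I would simply observe that if $x\in A_{k_{2}}(t)$ then $u(x,t)\geq k_{2}>k_{1}$, so $x\in A_{k_{1}}(t)$; the inclusion is immediate. Item (ii) is equally direct: by definition $A_{k_{0}}(t)\subset\Omega$, hence $|A_{k_{0}}(t)|\leq|\Omega|$ for every $t\in[0,T^{*}]$, and integrating in time gives the claimed bound on $f_{t}(k_{0})$.

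For item (iii), the equality $v_{k}(x,0)=0$ comes from the choice $k\geq k_{0}\geq\|u_{0}\|_{L^{\infty}(\Omega)}$, which forces $u_{0}(x)-k\leq 0$ almost everywhere, so that the positive part vanishes. For the integral bound, the key observation is that on $A_{k}(t)$ the function $u$ coincides with $v_{k}+k$ (this is how $v_{k}$ was defined), hence on this set
\begin{equation*}
u^{2}\,v_{k}=(v_{k}+k)^{2}v_{k}=v_{k}^{3}+2k v_{k}^{2}+k^{2}v_{k},
\end{equation*}
while on the other hand
\begin{equation*}
u\,v_{k}^{2}+k\,u\,v_{k}=u\,v_{k}(v_{k}+k)=u\,v_{k}\cdot u=u^{2}v_{k}.
\end{equation*}
Thus in fact equality holds pointwise on $A_{k}(t)$, so after integration the inequality in (iii) follows trivially (the authors presumably state it as an inequality because that is the form in which it is later applied in the Stampacchia-type iteration).

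I do not foresee any genuine obstacle; the only point to be careful about is to keep track of the fact that all identities take place on the super-level set $A_{k}(t)$ and that the complementary set contributes nothing since $v_{k}$ vanishes there. The corollary is purely preparatory for the De Giorgi/Stampacchia scheme that follows in Section~\ref{s5}, and these three properties (monotonicity of the level sets, a crude bound on $f_{t}(k_{0})$, and the algebraic identity rewriting $u^{2}v_{k}$ in terms of $v_{k}^{2}$ and $v_{k}$) are exactly what is needed to set up the recursive inequality on $f_{t}(k)$ in the main boundedness argument.
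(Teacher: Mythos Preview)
Your proof is correct and follows exactly the ``straightforward calculations'' that the paper invokes without giving details; each item is indeed a direct consequence of the definitions of $A_{k}(t)$, $f_{t}(k)$, and $v_{k}$. Your observation that (iii) is in fact an equality on $A_{k}(t)$ (since $u=v_{k}+k$ there gives $u v_{k}^{2}+k u v_{k}=u v_{k}(v_{k}+k)=u^{2}v_{k}$) is a nice sharpening of the stated inequality.
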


Clearly,  Theorem \ref{Th-ex1} will follow immediately from the
equality
\begin{equation}\label{3.5}
f_{t}(k)=0
\end{equation}
with some $k>k_{0}$ and any $t\in[0,T^{**}]$, where $T^{**}\leq
T^{*}$ being specified below. Indeed, \eqref{3.5} suggests that
\[
|A_{k}(t)|=0\quad\text{for all}\quad t\in [0,T^{**}],
\]
which in turn gives the desired bound
\begin{equation}\label{3.5*}
\|u\|_{L^{\infty}(\Omega_{T^{**}})}\leq C k_{0}
\end{equation}
and, hence, completes the proof of this theorem.

Thus, we are left  to verify \eqref{3.5}. To this end, we first
obtain Stampacchia type lemma playing the key role in the further
analysis.
\begin{lemma}\label{L-1}
Let $\alpha$ and $\beta$ be positive, and $x_{0}\geq 0$. We assume
that the function $f=f(x):[x_{0},+\infty)\mapsto[0,+\infty)$ is
non-increasing in $[x_{0},+\infty)$ and the following inequality
holds, for any $ y>x\geq x_{0},$
\begin{equation}\label{3.1}
f(y)\leq c_{4}x^{\alpha}(y-x)^{-\alpha}f^{\beta}(x)
\end{equation}
with a positive $c_{4}.$

If
\begin{equation}\label{3.1*}
\beta>1 \quad\text{and}\quad
0<c_{4}f^{\beta-1}(x_{0})2^{\frac{\alpha\beta}{\beta-1}}<1,
\end{equation}
then
\[
f(y)=0
\]
for each $y\geq
x_{0}\big[1-2^{\frac{\beta}{\beta-1}}c^{1/\alpha}_{4}f^{(\beta-1)/\alpha}(x_{0})\big]^{-1}.$
\end{lemma}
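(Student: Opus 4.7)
The proof will be a classical Stampacchia iteration. The plan is to construct an increasing sequence $x_n$ converging to any prescribed $y$ above a critical threshold, to show via the recursion \eqref{3.1} that $a_n:=f(x_n)$ obeys an estimate of the form $a_{n+1}\leq K\,2^{n\alpha}a_n^{\beta}$, and then to exhibit a geometric upper bound $a_n\leq a_0\lambda^n$ with $\lambda<1$. Since $f$ is non-increasing and $x_n\leq y$, this forces $f(y)\leq a_n\to 0$, and therefore $f(y)=0$.

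Concretely, introduce $\theta:=2^{\beta/(\beta-1)}c_4^{1/\alpha}f(x_0)^{(\beta-1)/\alpha}$, which by \eqref{3.1*} lies in $(0,1)$, and set $y^{\ast}:=x_0/(1-\theta)$. For any $y\geq y^{\ast}$, put $d:=y-x_0$; then the very choice of $y^{\ast}$ is equivalent to $d/y\geq\theta$. Take $x_n:=x_0+d(1-2^{-n})$, so that $x_n\nearrow y$, $x_n\leq y$, and $x_{n+1}-x_n=d\,2^{-(n+1)}$. Applying \eqref{3.1} to the pair $(x_n,x_{n+1})$ and bounding $x_n^{\alpha}\leq y^{\alpha}$ yields
\[
a_{n+1}\leq c_4\bigl(2y/d\bigr)^{\alpha}\,2^{n\alpha}\,a_n^{\beta}.
\]

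The heart of the argument is the inductive claim $a_n\leq a_0\lambda^n$ with $\lambda:=2^{-\alpha/(\beta-1)}\in(0,1)$. Using $\lambda^{\beta-1}=2^{-\alpha}$, one checks that the inductive step reduces to the single inequality $c_4(2y/d)^{\alpha}a_0^{\beta-1}\leq\lambda$, equivalently $(d/y)^{\alpha}\geq 2^{\alpha\beta/(\beta-1)}c_4\,a_0^{\beta-1}=\theta^{\alpha}$, which is precisely the inequality $d/y\geq\theta$ built into the definition of $y^{\ast}$. Hence $a_n\to 0$, and monotonicity of $f$ together with $x_n\leq y$ delivers $f(y)=0$, as claimed.

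The only real difficulty is the exponent bookkeeping: one must identify the decay ratio $\lambda=2^{-\alpha/(\beta-1)}$ and then match the inductive threshold to the quantity $\theta=2^{\beta/(\beta-1)}c_4^{1/\alpha}f(x_0)^{(\beta-1)/\alpha}$ appearing in the statement. The strict hypothesis $\theta<1$ from \eqref{3.1*} is indispensable, because it both keeps $1-\theta$ positive and closes the induction with $\lambda<1$; there is no slack at the critical radius $y^{\ast}=x_0/(1-\theta)$, so the algebra has to be executed carefully.
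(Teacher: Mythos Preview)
Your proof is correct and uses the same dyadic Stampacchia iteration as the paper: both construct $x_n=x_0+d(1-2^{-n})$ and exploit the recursion \eqref{3.1} along this sequence. The execution differs in two minor respects. First, the paper normalizes by setting $\mathcal{F}(y)=(f(y)/f(x_0))^{1/\alpha}$, which removes the exponent $\alpha$ from the recursion and makes the algebra a touch lighter; you work directly with $f$. Second, the paper iterates the recursion explicitly, obtaining a closed-form bound involving the sums $\sum_{k=0}^{n-1}\beta^k$ and $\sum_{k=0}^{n-1}(n-k)\beta^k$, and only then simplifies; your inductive ansatz $a_n\leq a_0\lambda^n$ with $\lambda=2^{-\alpha/(\beta-1)}$ short-circuits this computation. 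Finally, the paper fixes $d$ so that the limit point $x_0+d$ equals the threshold $y^\ast$ and then invokes monotonicity for larger $y$, whereas you run the iteration directly for each $y\geq y^\ast$. These are stylistic differences; the underlying mechanism is identical.
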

\begin{proof}
For  positive $\alpha,\beta$ (in \eqref{3.1}) and $x_{0},$ we
introduce the function
\[
\mathcal{F}=\mathcal{F}(y)=\bigg(\frac{f(y)}{f(x_{0})}\bigg)^{1/\alpha}
\]
with $y\geq x_{0}$.

\noindent Thanks to the properties of the function $f,$ we
immediately conclude that $\mathcal{F}$ is well-defined for each
$y\geq x_{0}$ and $\mathcal{F}(x_{0})=1.$

Clearly, in order to prove this lemma, it is enough to find some $b$
satisfying inequalities
\begin{equation}\label{3.2}
x_{0}<b<x_{0}\big[1-2^{\frac{\beta}{\beta-1}}c^{1/\alpha}_{4}f^{(\beta-1)/\alpha}(x_{0})\big]^{-1}
\end{equation}
such that
\begin{equation}\label{3.3}
\mathcal{F}(b+x_{0})=0.
\end{equation}
Indeed, collecting this equality with the properties of
$\mathcal{F}(y)$ provides the desired equality in Lemma \ref{L-1}.
Thus, we need to verify \eqref{3.3} with $b$ satisfying \eqref{3.2}.
To this end, we rewrite the inequality \eqref{3.1} in the term of
$\mathcal{F}$ and get
\begin{equation}\label{3.4}
\mathcal{F}(y)\leq\frac{C_{4}x}{y-x}\mathcal{F}^{\beta}(x)
\end{equation}
with $ x_{0}\leq x<y$ and
$C_{4}=c_{4}^{1/\alpha}f^{\frac{\beta-1}{\alpha}}(x_{0})$.

\noindent For some $b$  satisfying \eqref{3.2}, which will be
specified below, we construct the convergent subsequence
\[
x_{n}=x_{0}+b(1-2^{-n}),\quad n=0,1,2,....
\]
It is apparent that
\[
x_{n+1}-x_{n}=b2^{-n-1},\quad  x_{n}\leq x_{0}+b\quad\text{for
all}\quad n\in\mathbb{N}, \quad \text{and}\quad
x_{n}\underset{n\to+\infty}{\rightarrow}x_{0}+b.
\]
Then substituting $x=x_{n}$ and $y=x_{n-1}$ to \eqref{3.4} arrives
at the bound
\[
\mathcal{F}(x_{n+1})\leq
C_{4}\frac{x_{0}+b}{b}2^{n+1}\mathcal{F}^{\beta}(x_{n}).
\]
Arguing by induction, we derive
\[
\mathcal{F}(x_{n})\leq
C_{4}\frac{x_{0}+b}{b}2^{n}\mathcal{F}^{\beta}(x_{n-1})\leq
\bigg[C_{4}\frac{x_{0}+b}{b}\bigg]^{\sum_{k=0}^{n-1}\beta^{k}}2^{S_{n}},
\]
where
\[
S_{n}=\sum_{k=0}^{n-1}(n-k)\beta^{k}.
\]
Applying the following easily verified  equalities
\[
\sum_{k=0}^{n-1}z^{k}=\frac{z^{n}-1}{z-1},\quad
\sum_{k=0}^{n-1}kz^{k-1}=\frac{nz^{n-1}(z-1)+1-z^{n}}{(z-1)^{2}}
\]
for $z>1$, we compute
\[
\sum_{k=0}^{n-1}\beta^{k}=\frac{\beta^{n}-1}{\beta-1},\quad
S_{n}=n\sum_{k=0}^{n-1}\beta^{k}-\beta\sum_{k=0}^{n-1}k\beta^{k-1}=\frac{\beta^{n+1}-(n+1)\beta+n}{(\beta-1)^{2}}.
\]
Exploiting these relations to evaluate $\mathcal{F}(x_{n})$ arrives
at
\[
\mathcal{F}(x_{n})\leq
\bigg(C_{4}\frac{x_{0}+b}{b}\bigg)^{\frac{\beta^{n}-1}{\beta-1}}
2^{\frac{\beta^{n+1}-(n+1)\beta+n}{(\beta-1)^{2}}}.
\]
Taking into account condition \eqref{3.1*}, we select $b$ solving
the equation
\[
\frac{x_{0}+b}{b}C_{4}=2^{\frac{-\beta}{\beta-1}},
\]
that is
\[
b=\frac{C_{4}x_{0}2^{\frac{\beta}{\beta-1}}}{1-2^{\frac{\beta}{\beta-1}}
C_{4}}.
\]
After that, we draw up
\[
\mathcal{F}(x_{n})\leq 2^{-\frac{n}{\beta-1}}.
\]
Since $x_{n}\leq x_{0}+b$ and $\mathcal{F}$ is non-increasing, the
last inequality ensures
\[
0\leq \mathcal{F}(x_{0}+b)\leq 2^{-\frac{n}{\beta-1}}.
\]
Passing to the limit in this estimates as $n\to+\infty$ arrives at
\eqref{3.3}, which finishes the proof of this lemma.
\end{proof}
Now, taking into account Theorem \ref{Th-ex} and Corollary
\ref{c3.1}, we multiply equation \eqref{c-1} by $v_{k}$ and,
integrating over $\Omega_{\tau},$ $\tau< T^{*},$ and performing
standard technical calculations,  we have
\begin{equation}\label{3.6}
\frac{1}{2}\int_{\Omega}v_{k}^{2}(x,\tau)dx+d_{0}\int_{0}^{\tau}\int_{\Omega}|\nabla
 v_{k}(x,t)|^{2}dxdt\leq i_{1}+i_{2},
\end{equation}
where we set
\begin{align*}
i_{1}&=\|a\|_{L^{\infty}(\Omega_{\infty})}\int_{0}^{\tau}\int_{A_{k}(t)}u(x,t)v_{k}^{2}(x,t)dxdt,\\
i_{2}&=2k\|a\|_{L^{\infty}(\Omega_{\infty})}\int_{0}^{\tau}\int_{A_{k}(t)}u(x,t)v_{k}(x,t)dxdt.
\end{align*}
At this point, we evaluate each $i_{j},$ separately.

\noindent$\bullet$ Exploiting the Cauchy-Schwarz inequality, we
arrive at
\[
\int_{A_{k}(t)}u(x,t)v_{k}^{2}(x,t)dx\leq
\|v_{k}(\cdot,t)\|^{2}_{L^{4}(\Omega)}\|u(\cdot,t)\|_{L^{2}(A_{k}(t))}.
\]
Applying \eqref{g-n-0} and then the Jensen inequality to handle the
term $\|v_{k}(\cdot,t)\|^{2}_{L^{4}(\Omega)}$ yields the bound
\begin{align*}
\int_{A_{k}(t)}u(x,t)v_{k}^{2}(x,t)dx&\leq\frac{d_{0}}{4\|a\|_{L^{\infty}(\Omega_{\infty})}}\|\nabla
v_{k}(\cdot,t)\|^{2}\\
&+C_{5}\|v_{k}(\cdot,t)\|^{2}[\|u(\cdot,t)\|+\|u(\cdot,t)\|^{\frac{4}{4-d}}],
\end{align*}
where the positive quantity $C_{5}$ is independent of $k$, $\tau$
and $|A_{k}(t)|$.

\noindent Collecting this estimate and performing technical
calculations, we end up with the inequality
\[
i_{1}\leq \frac{d_{0}}{4}\|\nabla
v_{k}\|^{2}_{L^{2}(\Omega_{\tau})}+C_{5}\|a\|_{L^{\infty}(\Omega_{\infty})}\int_{0}^{\tau}\|v_{k}(\cdot,t)\|^{2}[\|u(\cdot,t)\|
+\|u(\cdot,t)\|^{\frac{4}{4-d}}]dt.
\]
Finally, bearing in mind the regularity of $u$, i.e. $u\in
C([0,T^{*}],L^{2}(\Omega))$ (see \eqref{apr-002}), we complete the
evaluation of $i_{1}$ with the bound
\[
i_{1}\leq \frac{d_{0}}{4}\|\nabla
v_{k}\|^{2}_{L^{2}(\Omega_{\tau})}+C_{5}[C^{*}+C^{*\frac{4}{4-d}}]\int_{0}^{\tau}\|v_{k}(\cdot,t)\|^{2}dt\quad
\forall\tau\in[0,T^{*}].
\]

\noindent$\bullet$ Coming to  $i_{2}$, we first analyze the case of
$d=3$. Utilizing sequentially the H\"{o}lder inequality with
exponents $p=6$ and $q=6/5$, and then \eqref{g-n-0} with $p=6$ and
$\theta=1$, we obtain
\begin{align*}
2k\|a\|_{L^{\infty}(\Omega_{\infty})}\int_{A_{k}(t)}u(x,t)v_{k}(x,t)dx&\leq
2k\|a\|_{L^{\infty}(\Omega_{\infty})}\|v_{k}(\cdot,t)\|_{L^{6}(\Omega)}\|u(\cdot,t)\|_{L^{6/5}(A_{k}(t))}\\
& \leq 2c_{1}k\|a\|_{L^{\infty}(\Omega_{\infty})}\|\nabla
v_{k}(\cdot,t)\|\|u(\cdot,t)\|_{L^{6/5}(A_{k}(t))}\\
&+2c_{2}k\|a\|_{L^{\infty}(\Omega_{\infty})}\|v_{k}(\cdot,t)\|.
\end{align*}
At last, applying Cauchy-Schwarz inequality provides the bound
\begin{align}\label{3.7}\notag
2k\|a\|_{L^{\infty}(\Omega_{\infty})}\int_{A_{k}(t)}u(x,t)v_{k}(x,t)dx&\leq
4k^{2}[c_{1}^{2}d_{0}^{-1}+c_{2}^{2}]\|a\|^{2}_{L^{\infty}(\Omega_{\infty})}\|u(\cdot,t)
\|^{2}_{L^{6/5}(A_{k}(t))}
\\
&+ \frac{d_{0}}{4}\|\nabla
v_{k}(\cdot,t)\|^{2}+\frac{1}{2}\|v_{k}(\cdot,t)\|^{2}
\quad\text{if}\quad d=3.
\end{align}

In the two-dimensional case, exploiting the similar technique
(utilizing \eqref{g-n-0} and Young inequality with the corresponding
exponents) arrives at the estimate
\begin{align*}
&k\|a\|_{L^{\infty}(\Omega_{\infty})}\int\limits_{A_{k}(t)}u(x,t)v_{k}(x,t)dx\\&\leq
4k^{2}\|a\|_{L^{\infty}(\Omega_{\infty})}\bigg[
\bigg(c_{2}+\frac{2c_{1}}{p_{1}}\bigg)^{2}+\frac{c_{1}^{2}(p_{1}-2)^{2}}{d_{0}p_{1}^{2}}\bigg]\|u(\cdot,t)
\|^{2}_{L^{\frac{p_{1}}{p_{1}-1}}(A_{k}(t))}
\\
&+ \frac{d_{0}}{8}\|\nabla
v_{k}(\cdot,t)\|^{2}+\frac{1}{2}\|v_{k}(\cdot,t)\|^{2}
\end{align*}
with some $p_{1}>2$ being specified below.

In fine, taking into account this estimate and \eqref{3.7}, we
complete the evaluation of $i_{2}:$
\[
i_{2}\leq \frac{d_{0}}{4}\int_{0}^{\tau}\|\nabla
v_{k}(\cdot,t)\|^{2}dt+\int_{0}^{\tau}\|v_{k}(\cdot,t)\|^{2}dt+k^{2}C_{6}\int_{0}^{\tau}\|u(\cdot,t)\|^{2}_{L^{q_{1}}(A_{k}(t))}dt,
\]
where
\[
q_{1}=\begin{cases} 6/5\quad\text{ if}\quad d=3,\\
\frac{p_{1}}{p_{1}-1}\quad\text{if}\quad d=2,
\end{cases}
\]
and
\[
C_{6}=\|a\|^{2}_{L^{\infty}(\Omega_{\infty})}\begin{cases}
2(c_{1}^{2}d_{0}^{-1}+c_{2})\qquad\qquad\qquad\quad \text{if}\quad d=3,\\
8 \bigg[
\bigg(c_{2}+\frac{2c_{1}}{p_{1}}\bigg)^{2}+\frac{c_{1}^{2}(p_{1}-2)^{2}}{d_{0}p_{1}^{2}}\bigg]\quad
\text{if}\quad d=2.
\end{cases}
\]
Finally, to manage the last term in the right-hand side, we utilize
the H\"{o}lder inequality and get the bound
\[
\|u(\cdot,t)\|^{2}_{L^{q_{1}}(A_{k}(t))}\leq
\|u(\cdot,t)\|^{2}_{L^{2}(\Omega)}|A_{k}(t)|^{q_{2}}\quad\text{with}\quad
q_{2}=\begin{cases} 2/3\quad\text{ if}\quad d=3,\\
\frac{p_{1}-2}{p_{1}}\quad\text{if}\quad d=2,
\end{cases}
\]
which in turn provides
\[
i_{2}\leq \frac{d_{0}}{4}\int_{0}^{\tau}\|\nabla
v_{k}(\cdot,t)\|^{2}dt+\int_{0}^{\tau}\|v_{k}(\cdot,t)\|^{2}dt +
C_{6}k^{2}\|u\|^{2}_{C([0,T^{*}],L^{2}(\Omega))}\int_{0}^{\tau}|A_{k}(t)|^{q_{2}}dt.
\]
Here, to obtain the last term in the right-hand side, we used the
regularity of a weak solution established in Theorem \ref{Th-ex}.

At this point, coming to \eqref{3.6} and collecting estimates of
$i_{1}$ and $i_{2}$, we deduce
\[
\int_{\Omega}v_{k}^{2}(x,\tau)dx+d_{0}\int_{0}^{\tau}\int_{\Omega}|\nabla
v_{k}(x,t)|^{2}dxdt\leq
C_{7}\|v_{k}\|^{2}_{L^{2}(\Omega_{\tau})}+2C_{6}\|u\|^{2}_{C([0,T^{*}],L^{2}(\Omega))}k^{2}\int_{0}^{\tau}|A_{k}(t)|^{q_{2}}dt.
\]
At last, the Gr\"{o}nwall Lemma \cite{Gr} produces the bound
\[
\|v_{k}\|_{L^{\infty}(0,\tau;L^{2}(\Omega))}+\|v_{k}\|_{L^{2}(0,\tau;H^{1}(\Omega))}\leq
C_{8}k\bigg(\int_{0}^{\tau}|A_{k}(t)|^{q_{2}}dt\bigg)^{1/2}\quad\forall
\tau\in(0,T^{*}]
\]
with positive $C_{8}$ being independent of $\tau$ and $k$.

\noindent Utilizing the compact embedding \eqref{2.0*} with
$q=\frac{2(d+2)}{d},$ $p=2$ to manage the left-hand side of the last
inequality and applying Young inequality with exponents
$\frac{1}{q_{2}}$ and $\frac{1}{1-q_{2}}$ to handle the right-hand
side lead to the estimate
\begin{equation}\label{3.9}
\|v_{k}\|_{L^{\frac{2(d+2)}{d}}(\Omega_{\tau})}\leq
C_{9}k(f_{\tau}(k))^{q_{2}/2} \quad\forall \tau\in(0,T^{*}]
\end{equation}
with $C_{9}$ being independent of $\tau$ and $k$ and depending on
$T^{*}$.

On the other hand, employing Corollary \ref{c3.1} gives
\begin{equation}\label{3.10*}
\|v_{k}\|_{L^{\frac{2(d+2)}{d}}(\Omega_{\tau})}\geq(k_{2}-k)(f_{\tau}(k_{2}))^{\frac{d}{2(d+2)}}
\end{equation}
for any $k_{2}>k\geq k_{0}.$ Collecting this inequality with
\eqref{3.9} entails
\[
f_{\tau}(k_{2})\leq
\bigg(C_{9}\frac{k}{k_{2}-k}\bigg)^{\frac{2(d+2)}{d}}(f_{\tau}(k))^{\frac{q_{2}(d+2)}{d}}.
\]
Obviously, $\frac{q_{2}(d+2)}{d}>1$ if either $d=3$ or $d=2$ and
$p_{1}>4$. Hence, applying Lemma \ref{L-1} with
\[
\beta=\frac{q_{2}(d+2)}{d},\quad \alpha=\frac{2(d+2)}{d},\quad
x=k,\quad y=k_{2},\quad c_{4}=C_{9}^{\frac{2d+4}{d}}
\]
ends up with the equality
\[
f_{\tau}(k_{2})=0
\]
for each $k_{2}\geq
k_{0}[1-2^{\frac{q_{2}(d+2)}{q_{2}(d+2)-d}}C_{9}(f_{\tau}(k_{0}))^{\frac{q_{2}(d+2)-d}{2d+4}}]^{-1}$
provided
\begin{equation}\label{3.10}
C_{9}^{\frac{2d+4}{d}}(f_{\tau}(k_{0}))^{\frac{q_{2}(d+2)-d}{d}}2^{\frac{2q_{2}(d+2)^{2}}{d(q_{2}(d+2)-d)}}<1.
\end{equation}
Corollary \ref{c3.1} tells that inequality \eqref{3.10} holds if
\[
\tau<\min\{T^{*},
C_{9}^{-\frac{2d+4}{q_{2}(d+2)-d}}2^{-2q_{2}(\frac{d+2}{q_{2}(d+2)-d})^{2}}\}\equiv
T_{1}
\]
In fine, selecting
\[
T^{**}=\min\{T_{1},|\Omega|^{-1}
C_{9}^{-\frac{2d+4}{q_{2}(d+2)-d}}2^{\frac{-2(d+2)[q_{2}(d+3)-d]}{(q_{2}(d+2)-d)^{2}}}
\},
\]
we conclude that
\[
f_{\tau}(k_{2})=0\qquad\text{for any}\quad \tau\in[0,T^{**}]\quad
\text{and}\quad k_{2}\geq 2k_{0},
\]
which leads to \eqref{3.5*} with $C=2$. This finishes the proof of
Theorem \ref{Th-ex1}. \qed


\section{The proof of Theorem~\ref{Th-2}}\label{s6}

\noindent The requirements in this claim are stronger than them in
Theorem \ref{Th-ex}, thus, we can immediately conclude with the
unique nonnegative weak solution to \eqref{c-1}--\eqref{c-3}.

After that, we carry out the arguments in two stages. Assuming
\eqref{i.2}, the first step is related with the verification of the
relations
\begin{equation}\label{6.0}
u(x,t)\leq 1 \quad\text{a.e. in}\quad
\Omega_{T^{*}}\quad\text{and}\quad \ln(1-u)\in
L^{2}(0,T^{*};H^{1}(\Omega)).
\end{equation}
To this end, for any positive $\varepsilon$, we introduce  the
entropy function $G_{\varepsilon}(z),$  and consider the approximate
problems constructed via the approximated sequence of the initial
data. In the second stage, we examine the regularity $\ln u\in
L^{2}(0,T^{*};H^{1}(\Omega))$ and obtain the second estimate in
\eqref{entr-e}.

\noindent\textit{Step I:} At this point, for each $\varepsilon\geq
0$, we define the function
\[
G_{\varepsilon}(z)=\begin{cases}
\frac{z-1}{\sqrt{\varepsilon}}\arctan\frac{\sqrt{\varepsilon}z}{1-z+\varepsilon}
-\frac{1}{2}\ln\frac{\varepsilon+(z-1)^{2}}{1+\varepsilon},\quad\varepsilon>0,\\
-z-\ln|1-z|,\quad \varepsilon=0
\end{cases}
\]
for all $z\geq 0$.

The straightforward calculations provide the following properties of
the function $G_{\varepsilon}(z)$.
\begin{corollary}\label{c7.1}
The following relations hold for any $\varepsilon\geq 0$:

\noindent (i) $G'_{\varepsilon}(0)=G_{\varepsilon}(0)=0;$

\smallskip
\noindent (ii) $G_{\varepsilon}(z)\geq (2\varepsilon)^{-1}(z-1)^{2}$
if $z>1$,

$|G_{\varepsilon}(z)-G_{0}(z)|\leq\frac{\sqrt{\varepsilon}}{4|1-z|}$
if $|z|<1$;

\smallskip
\noindent (iii) $0\leq G''_{0}(z)-G''_{\varepsilon}(z)\leq\frac{
\sqrt{\varepsilon}}{2|1-z|^{3}}$ for any $z\in\mathbb{R}$,

 $G''_{\varepsilon}(z)=\frac{1}{\varepsilon+(1-z)^{2}}$ for
any $z\in \mathbb{R}$;

\smallskip
\noindent (iv) $|G'_{\varepsilon}(z)|\leq
C|z||1-z+\varepsilon|^{-1}$ for any non-negative $z$ and each
positive $\varepsilon$, where
\[
C=\begin{cases} \pi/2\quad\text{if}\quad
z\in\bigg[\frac{1+\varepsilon}{1+\sqrt{\varepsilon}},\frac{1+\varepsilon}{1-\sqrt{\varepsilon}}\bigg],\\
1\qquad\text{if}\quad z\in
\bigg[0,\frac{1+\varepsilon}{1+\sqrt{\varepsilon}}\bigg)\cup\bigg(\frac{1+\varepsilon}{1-\sqrt{\varepsilon}},+\infty\bigg).
\end{cases}
\]
\end{corollary}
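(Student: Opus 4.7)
The plan is to derive closed forms for $G'_\varepsilon$ and $G''_\varepsilon$ by direct differentiation of the definition, which immediately yields (i), (iii), and (iv); property (ii) then requires an additional integral representation argument. The key algebraic identity underlying the derivative computations is $(1-z+\varepsilon)^2 + \varepsilon z^2 = (1+\varepsilon)\bigl((1-z)^2+\varepsilon\bigr)$, verified by direct expansion. Using it, differentiating the $\arctan$ piece of $G_\varepsilon$ produces a rational term that exactly cancels the derivative of $-\tfrac{1}{2}\ln\tfrac{\varepsilon+(z-1)^2}{1+\varepsilon}$, leaving $G'_\varepsilon(z) = \tfrac{1}{\sqrt{\varepsilon}}\arctan\tfrac{\sqrt{\varepsilon}z}{1-z+\varepsilon}$. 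A second differentiation combined with the same identity collapses the answer to $G''_\varepsilon(z) = \bigl(\varepsilon + (1-z)^2\bigr)^{-1}$, which is the formula in (iii). Evaluating these expressions at $z=0$ proves (i).

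For the first inequality of (iii) I would compute, from the explicit formulas, $G''_0(z) - G''_\varepsilon(z) = \varepsilon / [(1-z)^2(\varepsilon+(1-z)^2)] \geq 0$, and deduce the upper bound $\sqrt{\varepsilon}/[2|1-z|^3]$ from the AM--GM inequality $\varepsilon + (1-z)^2 \geq 2\sqrt{\varepsilon}|1-z|$ applied in the denominator. For (iv), the closed form for $G'_\varepsilon$ reduces the claim to two elementary bounds on $\arctan$: $|\arctan x|\leq |x|$ when $|x|\leq 1$ (producing the $C=1$ regime) and $|\arctan x|\leq \pi/2$ always (producing the $C=\pi/2$ regime), with the breakpoints $z=(1+\varepsilon)/(1\pm\sqrt{\varepsilon})$ being precisely the solutions of $\sqrt{\varepsilon}z = \pm(1-z+\varepsilon)$.

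For (ii), I would start from the representation $G_\varepsilon(z) = \int_0^z (z-t)\,G''_\varepsilon(t)\,dt$, which follows from (i) by two integrations. In the range $|z|<1$ one writes $G_\varepsilon(z) - G_0(z) = \int_0^z (z-t)[G''_\varepsilon(t)-G''_0(t)]\,dt$, applies the bound from (iii), and performs integration by parts of $(z-t)/|1-t|^3$ against the primitive $1/[2(1-t)^2]$; the resulting integral evaluates to $z^2/[2(1-z)]$, so overall $|G_\varepsilon(z)-G_0(z)|\leq \sqrt{\varepsilon}\,z^2/[4(1-z)]$, which is dominated by $\sqrt{\varepsilon}/[4|1-z|]$ since $z^2\leq 1$. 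For $z>1$, I would use the Taylor expansion $G_\varepsilon(z) = G_\varepsilon(1) + G'_\varepsilon(1)(z-1) + \int_1^z (z-t)\,G''_\varepsilon(t)\,dt$, discard the first two non-negative terms (using the convexity and (i)), and attempt to extract a quadratic lower bound from the remainder via the pointwise identity $G''_\varepsilon(t)=1/[\varepsilon+(t-1)^2]$, localised to the subinterval where $(t-1)^2\lesssim\varepsilon$.

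The main obstacle I anticipate is this last step. A straightforward convexity argument produces only the weaker bound $(z-1)^2/[2(\varepsilon+(z-1)^2)]$, one power shy of the claimed $(z-1)^2/(2\varepsilon)$, so a genuine localisation of the integral on an $O(\sqrt{\varepsilon})$-neighbourhood of $t=1$ is essential; this should comfortably cover the near-critical regime $z-1 = O(\sqrt{\varepsilon})$, while the complementary regime $z-1\gg\sqrt{\varepsilon}$ may have to be treated separately, perhaps by exploiting the linear asymptotics $G_\varepsilon(z)\sim \pi(z-1)/\sqrt{\varepsilon}$ coming from the saturation of $\arctan$ to $\pi/2$, which forces a re-examination of the claim's intended range or constant.
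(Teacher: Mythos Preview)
The paper offers no proof at all beyond the sentence ``the straightforward calculations provide the following properties'', so your proposal is exactly the exercise the authors leave to the reader, and for (i), (iii), (iv), and the second inequality in (ii) your computations are correct and complete. The algebraic identity $(1-z+\varepsilon)^{2}+\varepsilon z^{2}=(1+\varepsilon)\bigl((1-z)^{2}+\varepsilon\bigr)$ is the right tool, and your integration-by-parts evaluation of $\int_{0}^{z}(z-t)|1-t|^{-3}\,dt=z^{2}/[2(1-z)]$ for the second half of (ii) is clean.

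Your reservation about the first inequality in (ii) is not merely an obstacle but a genuine defect in the stated bound. With the continuous-branch reading of $\arctan$ (required if $G_{\varepsilon}$ is to be smooth across $z=1+\varepsilon$), one has $G_{\varepsilon}(z)\sim \pi(z-1)/\sqrt{\varepsilon}$ for $z\to\infty$, so a quadratic lower bound $(z-1)^{2}/(2\varepsilon)$ cannot hold for all $z>1$; concretely, at $z=2$, $\varepsilon=10^{-2}$ one computes $G_{\varepsilon}(2)\approx 29.4$ against a claimed lower bound of $50$. What the paper actually needs in Section~\ref{s6} is only that $\int_{\Omega}G_{\varepsilon}(u^{\varepsilon})\,dx$ bounded uniformly in $\varepsilon$ forces $u\leq 1$ a.e., and for that the weaker (and true) inequality $G_{\varepsilon}(z)\to+\infty$ as $\varepsilon\to 0$ for each fixed $z>1$ already suffices; your Taylor-remainder bound $G_{\varepsilon}(z)\geq\int_{1}^{z}(z-t)\,G''_{\varepsilon}(t)\,dt$ delivers this via, e.g., the estimate $\int_{1}^{z}(z-t)[\varepsilon+(t-1)^{2}]^{-1}\,dt\geq \tfrac{z-1}{\sqrt{\varepsilon}}\arctan\tfrac{z-1}{\sqrt{\varepsilon}}-\tfrac{1}{2}\ln\bigl(1+\tfrac{(z-1)^{2}}{\varepsilon}\bigr)$. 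So your instinct to flag the constant is right; there is nothing further in the paper to compare against.
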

Bearing in mind requirement \eqref{i.2}, we approximate the initial
data $u_{0}$ by the sequence
$\{u_{0}^{\varepsilon}\}_{\varepsilon>0}$ having the properties:
\begin{align}\label{6.2}\notag
&u_{0}^{\varepsilon}\in L^{2}(\Omega), \quad 0\leq
u_{0}^{\varepsilon}\leq 1-\varepsilon^{\delta}\quad\text{with
some}\quad \delta\in(0,1/2);\\
& u_{0}^{\varepsilon}\underset{\varepsilon\to 0}{\rightarrow}
u_{0}\quad\text{strongly in}\quad L^{2}(\Omega).
\end{align}
Then, for each $u_{0}^{\varepsilon},$ we consider the approximating
problem
\begin{equation}\label{6.1}
\begin{cases}
u_{t}^{\varepsilon}-\nabla(D(x)\nabla
u^{\varepsilon})=a(x,t)u^{\varepsilon}(1-u^{\varepsilon})\quad
\text{in}\quad \Omega_{T^{*}},\\
\nabla u^{\varepsilon}\cdot \mathbf{n}=0\qquad\qquad\text{on}\quad
\partial\Omega_{T^{*}},\\
u^{\varepsilon}(x,0)=u_{0}^{\varepsilon}(x)\qquad\text{in
}\quad\Omega.
\end{cases}
\end{equation}
Clearly, all requirements of Theorem \ref{Th-ex} are satisfied in
the case of problem \eqref{6.1}, and, hence, Theorem \ref{Th-ex}
provides the one-valued weak solvability of \eqref{6.1} and,
besides, $u^{\varepsilon}\geq 0$ a.e. in $\Omega_{T^{*}}$. Arguing
analogously to the proof of Theorem  \ref{Th-ex} deduces the
corresponding compactness on $\{u^{\varepsilon}\}_{\varepsilon>0}$
and, hence,
\begin{equation}\label{6.2*}
u_{\varepsilon}\underset{\varepsilon\to 0}{\rightarrow}
u\quad\text{strongly in}\quad L^{2}(\Omega_{T^{*}})\quad\text{and
a.e. in} \quad \Omega_{T^{*}},
\end{equation}
where $u$ is a nonnegative weak solution of \eqref{c-1}-\eqref{c-3}
(in the sense of Definition \ref{weak-sol}).

Now, we aim to verify the inequality in \eqref{6.0}. To this end,
arguing by contradiction, we assume that
\[
u(x,t)> 1\quad\text{a.e. in}\quad \Omega_{T^{*}}
\]
and utilize the first estimate in (ii) of Corollary \ref{c7.1} with
$z=u^{\varepsilon}-1$. As a result, we have
\[
\int_{\Omega}(u^{\varepsilon}-1)^{2}dx\leq
2\varepsilon\int_{\Omega}G_{\varepsilon}(u^{\varepsilon})dx.
\]
It is apparent that if the integral in the right-hand side is
uniformly bounded, then passing to the limit in this estimate as
$\varepsilon \to 0$ provides the contradiction with the assumption.
In summary, we conclude that
\[
u(x,t)\leq 1\quad\text{a.e. in}\quad \Omega_{T^{*}}.
\]

Thus, we are left to verify that
\[
\int_{\Omega}G_{\varepsilon}(u^{\varepsilon})dx\leq C
\]
with the positive $C$ being independent of $\varepsilon.$ To this
end, we multiply the equation in \eqref{6.1} by
$G'_{\varepsilon}(u^{\varepsilon})$ and, integrating over
$\Omega_{\tau}$ (after performing technical calculations and
appealing to (iv) in Corollary \ref{c7.1}), we get
\begin{align}\label{6.4*}\notag
\int_{\Omega}G_{\varepsilon}(u^{\varepsilon})dx+\int_{\Omega_{\tau}}D(x)G''_{\varepsilon}(u^{\varepsilon})|\nabla
u^{\varepsilon}|^{2}dxdt&=
\int_{\Omega}G_{\varepsilon}(u^{\varepsilon}_{0})dx+\int_{\Omega_{\tau}}a(x,t)u^{\varepsilon}(1-u^{\varepsilon})G'_{\varepsilon}(u^{\varepsilon})dxdt\\
& \leq I_{1}+I_{2},
\end{align}
where we set
\[
I_{1}=\int_{\Omega}G_{\varepsilon}(u_{0}^{\varepsilon})dx\quad\text{and}\quad
I_{2}=\frac{\pi}{2}\|a\|_{L^{\infty}(\Omega_{\infty})}\int_{\Omega_{\tau}}(u^{\varepsilon})^{2}dxdt.
\]
Clearly, the estimate of $I_{2}$ is a simple consequence of
\eqref{6.2*}, that is
\[
I_{2}\leq C T^{*}
\]
with $C$ being independent of $\varepsilon.$

Coming to $I_{1},$ we rewrite this term in more suitable form
\[
I_{1}=\int_{\Omega}[G_{\varepsilon}(u_{0}^{\varepsilon})-G_{0}(u_{0}^{\varepsilon})]dx+\int_{\Omega}G_{0}(u_{0}^{\varepsilon})dx.
\]
Bearing in mind \eqref{6.2} and applying the second inequality in
(ii) of Corollary \ref{c7.1}, we arrive at
\[
I_{1}\leq
\frac{\varepsilon^{\frac{1}{2}-\delta}}{4}+\int_{\Omega}G_{0}(u_{0}^{\varepsilon})dx.
\]
Finally, the strong convergence $u_{0}^{\varepsilon}$ to $u_{0}$ in
$L^{2}(\Omega)$ yields
\[
 I_{1}\leq
\frac{\varepsilon^{\frac{1}{2}-\delta}}{4}+\int_{\Omega}G_{0}(u_{0})dx.
\]

\noindent Collecting estimates of $I_{j}$ with \eqref{6.4*} entails
\begin{equation}\label{6.5}
\int_{\Omega}G_{\varepsilon}(u^{\varepsilon})dx+d_{0}\int_{\Omega_{\tau}}G''_{\varepsilon}(u^{\varepsilon})|\nabla
u^{\varepsilon}|^{2}dxdt\leq
\frac{\varepsilon^{\frac{1}{2}-\delta}}{4}+\int_{\Omega}G_{0}(u_{0})dx+CT^{*}.
\end{equation}
This inequality together with \eqref{i.2} ensures the uniformly
boundedness of $\int_{\Omega}G_{\varepsilon}(u^{\varepsilon})dx$ for
any $\tau\in[0,T^{*}]$.

In light of  \eqref{i.2}, \eqref{6.2},  and Fatou's Lemma, we deduce
from \eqref{6.5} that
\[
\int_{\Omega}G_{0}(u)dx+d_{0}\int_{\Omega_{\tau}}|\nabla\ln(1-u)|^{2}dxdt\leq
\int_{\Omega}G_{0}(u_{0})dx+CT^{*}
\]
for each $\tau\in[0,T^{*}]$.

\noindent\textit{Step II.} Here, we exploit the modified (in light)
arguments of Step I. Namely, we replace $u_{0}^{\varepsilon}$ and
$G_{\varepsilon}(z)$ by $v_{0}^{\varepsilon}$ and
$\mathcal{G}_{\varepsilon}(z)$, where $v_{0}^{\varepsilon}$
satisfies the relations:
\begin{align*}
&v_{0}^{\varepsilon}\in L^{2}(\Omega), \quad v_{0}^{\varepsilon}\geq
\varepsilon^{\delta}\quad\text{with
some}\quad \delta\in(0,1/2);\\
&v_{0}^{\varepsilon}\underset{\varepsilon\to 0}{\rightarrow}
u_{0}\quad\text{strongly in}\quad L^{2}(\Omega),
\end{align*}
while
\[
\mathcal{G}_{\varepsilon}(z)=
\begin{cases}
z\varepsilon^{-1/2}\arctan\frac{(z-1)\varepsilon^{1/2}}{\varepsilon+z}-\frac{1}{2}\ln\frac{z^{2}+\varepsilon}{1+\varepsilon},\quad\varepsilon>0,
\\
z-1-\ln z,\qquad\qquad \varepsilon=0,
\end{cases}
\]
for each $z\in[0,1]$.

Obviously, $\mathcal{G}_{\varepsilon}(z)$ has the  properties
similar to them established in Corollary \ref{c7.1}. In particular,
\[
\mathcal{G}''_{\varepsilon}(z)=(z^{2}+\varepsilon)^{-1}>0,\quad
\mathcal{G}'_{\varepsilon}(1)=\mathcal{G}_{\varepsilon}(1)=0.
\]
After that, recasting arguments of Step I, we end up with the bound
\[
\int_{\Omega}[u-1-\ln u]dx+d_{0}\int_{\Omega_{\tau}}|\nabla\ln
u|^{2}dxdt \leq CT^{*}+\int_{\Omega}\mathcal{G}_{0}(u_{0})dx,
\]
which completes the proof of Step II and, hence the proof of Theorem
\ref{Th-2}. \qed

\section{Proof of Theorem \ref{t3}}\label{s7}

\noindent It is worth noting that the proof of Theorem \ref{Th-2}
(Section \ref{s6}) as well as the verification of the uniqueness and
the nonnegativity of a weak solution given in Sections \ref{s4.3}
and \ref{s4.4} are true for any  positive fixed $T$ provided $u$ is
a weak solution defined in $\Omega_{T}$. Thus, to verify Theorem
\ref{t3}, we first have to extend the nonnegative weak solution
constructed in Theorem \ref{Th-ex} from the time interval
$[0,T^{*}]$ to the segment $[0,T]$ for any $T>T^{*}$. On the second
stage, we should examine the boundedness of this solution for any
fixed $T>0$.

\noindent\textit{Step 1: Global weak solvability of
\eqref{c-1}--\eqref{c-3}.} Let $u$ be a nonnegative weak solution
built in Theorem \ref{Th-ex}. To extend this solution from
$[0,T^{*}]$ to $[0,T]$ is enough to obtain inequality
\eqref{apr-002} for $u$ instead  of $u^{N}$ with the constant
$C^{*}$ being independent of $T^{*}$ but being controlled with $T$.
After that the standard so-called extension arguments (see, for
instance, the proof of \cite[Theorem 3.3]{CGM}) provides the
existence of a weak solution on whole time interval.

Now, we wish to examine the estimate
\begin{equation}\label{4.1}
\|u\|_{V^{2}(\Omega_{\tau})}\leq C\quad\text{for any}\quad
\tau\in[0,T^{*}],
\end{equation}
where $C$ depends only on the corresponding norm of
$u_{0}(x),a(x,t),$ the Lebesgue measure of $\Omega$ and $ T,$ but is
independent of $T^{*}$.

To refine estimate \eqref{apr-002}, bearing in mind the
nonnegativity of $u(x,t)$ and $a(x,t)$ and testing \eqref{c-1} by
$u$, we deduce that
\begin{align*}
&\int_{\Omega}u^{2}(x,\tau)dx+d_{0}\int_{0}^{\tau}\int_{\Omega}|\nabla
u(x,t)|^{2}dxdt+\int_{0}^{\tau}\int_{\Omega}
a(x,t)u^{3}(x,t)dxdt\\
&\leq \int_{\Omega}u_{0}^{2}(x)dx+\int_{0}^{\tau}\int_{\Omega}
a(x,t)u^{2}(x,t)dxdt
\end{align*}
for any $\tau\in[0,T^{*}].$ To handle the last term in the
right-hand side, we apply the Young inequality with exponents $3/2$
and $3$ and arrive at the bound
\begin{align}\label{4.3}\notag
&\int_{\Omega}u^{2}(x,\tau)dx+d_{0}\int_{0}^{\tau}\int_{\Omega}|\nabla
u(x,t)|^{2}dxdt+\int_{0}^{\tau}\int_{\Omega}a(x,t)u^{3}(x,t)dxdt\\
& \leq
\int_{\Omega}u_{0}^{2}(x)dx+\frac{1}{3}\int_{0}^{\tau}\int_{\Omega}
a(x,t)dxdt+\frac{2}{3}\int_{0}^{\tau}\int_{\Omega}
a(x,t)u^{3}(x,t)dxdt,
\end{align}
which immediately yields the desired estimate
\[
\|u\|_{V^{2}(\Omega_{\tau})}\leq
\|u_{0}\|_{L^{2}(\Omega)}+\|a\|^{1/2}_{L^{\infty}(\Omega_{\infty})}|\Omega|T
\]
for any $\tau\in[0,T^{*}]$. This completes the proof of a global
weak solvability of \eqref{c-1}--\eqref{c-3}.

\noindent\textit{Step 2: The boundedness of $u(x,t)$.} In order to
prove \eqref{3.5*} for any fixed positive $T$, we recast the
arguments of Section \ref{s5}, where instead of Lemma \ref{L-1} we
utilize the classical Stampacchia Lemma \cite[Lemma 4.1]{S1}. To
this end, we modify slightly  the evaluation of nonlinear term in
\eqref{3.6}. Namely, multiplying \eqref{c-1} by $v_{k}$ and then
integrating over $\Omega_{\tau}$ (taking into account the
nonnegativity of $a(x,t)$), we obtain
\begin{align*}
&\frac{1}{2}\int_{\Omega}v_{k}^{2}(x,\tau)dx+d_{0}\int_{0}^{\tau}\int_{\Omega}|\nabla
v_{k}|^{2}dxdt+\int_{0}^{\tau}\int_{A_{k}(t)}a(x,t)u^{2}(x,t)v_{k}(x,t)dxdt\\
&= \int_{0}^{\tau}\int_{A_{k}(t)}a(x,t)u(x,t)v_{k}(x,t)dxdt
\end{align*}
for all $\tau\in[0,T]$. To handle the right-hand side, we recast the
arguments leading to \eqref{4.3} and end up with the inequality
\[
\int_{\Omega}v_{k}^{2}(x,\tau)dx+d_{0}\int_{0}^{\tau}\int_{\Omega}|\nabla
v_{k}|^{2}dxdt\leq
\|a\|_{L^{\infty}(\Omega_{\infty})}\int_{0}^{\tau}\|v_{k}(\cdot,t)\||A_{k}(t)|^{1/2}dt.
\]
Exploiting the Gr\"{o}nwall lemma arrives at the bound
\[
\|v_{k}\|_{L^{\infty}(0,\tau;L^{2}(\Omega))}+\|\nabla
v_{k}\|_{L^{2}(\Omega_{\tau})}\leq C_{10}(f_{\tau}(k))^{1/2}
\]
with $C_{10}=\sqrt{2\|a\|_{L^{\infty}(\Omega_{\infty})}T}$.

\noindent Then repeating the arguments leading to \eqref{3.10*}
entails
\[
f_{\tau}(k_{2})\leq\frac{C_{11}}{(k_{2}-k)^{\frac{2d+4}{d}}}(f_{\tau}(k))^{1+\frac{2}{d}}
\]
for all $k_{2}>k\geq k_{0}$ and each $\tau\in[0,T]$. Here, the
positive quantity $C_{11}$ is independent of $\tau,$ $k_{2}$ and
$k$. In fine, appealing to the classical Stampacchia Lemma ends up
with the equality
\[
f_{\tau}(k_{2})=0
\]
for all $k_{2}\geq
k_{0}+2^{\frac{d+2}{2}}C_{11}^{\frac{d}{2d+4}}(f_{\tau}(k_{0}))^{\frac{1}{d+2}}$.
Collecting this equality with Corollary \ref{c3.1} leads to the
desired estimate
\[
\|u\|_{L^{\infty}(\Omega_{T})}\leq
k_{0}+2^{\frac{d+2}{2}}C_{11}^{\frac{d}{2d+4}}(|\Omega|T)^{\frac{1}{d+2}}
\]
which completes the proof of Theorem \ref{t3}. \qed


\section{Optimal Control}\label{s8}

\noindent In view of the optimal control problem, we focus on a cost
functional that is based on prescribed the target function for the
tumor cell density in $\Omega_{T}$ at the final time $T$ of the
radiation  therapy. It is wroth  recalling that the coefficient
$a(x,t)$ in \eqref{c-1} has the form
\[
a(x,t)=\rho-R(x,t)
\]
with the positive constant $\rho$, and, generally speaking, this
coefficient can be an arbitrary sign.

Accordingly, for assigned $R\in L^{\infty}(\Omega_{T})$, we define
the objective functional
\begin{equation}
J(R) = \int\limits_{0}^{T}\int \limits_{\Omega} { u_{R}(x,t)  \, dx
dt },
\label{eq:objective}
\end{equation}
where $u_{R}=u_{R}(x,t)$ is the (unique) weak solution to
\eqref{c-1}--\eqref{c-3} with given  $R$ and  originated by any
observed initial state $u_{0}$. Clearly, Theorems \ref{Th-ex},
\ref{Th-ex1} and \ref{t3} tell that the objective functional makes
sense for any fixed positive $T$ if $R(x,t)\leq \rho$ in
$\Omega_{T}$, otherwise only for each positive $T\leq T^{*}$.

Then, we introduce the set of admissible controls
\begin{equation}\label{j-0}
\mathcal{U} = \{ R \in L^{\infty}(\Omega_T): 0 \leqslant R(x,t)
\leqslant M \text{ and }\int\limits_{0}^{T}\int
\limits_{\Omega}{R(x,t) \, dx dt } = \Gamma \},
\end{equation}
where  given $M$ and $\Gamma$ are two threshold positive quantities.
It is worth noting that the set $\mathcal{U} $ is well-defined only
if the value $\frac{\Gamma}{M}$ satisfies the bound
\begin{equation}\label{8.1}
0<\frac{\Gamma}{M}\leq T|\Omega|.
\end{equation}

Turning to the optimal control formulation, we aim to search for
$R^{*}=R^{*}(x,t)\in\mathcal{U}$ such that
\begin{equation}
\label{eq:optControl_obj}
J(R^*) = \mathop {\inf} \limits_{R  \in \mathcal{U} } J(R).
\end{equation}

We notice that, if $M\leq\rho$ then the coefficient $a(x,t)$ is
nonnegative and, hence,  Theorem \ref{t3} provides the one-to-one
global weak solvability of \eqref{c-1}-\eqref{c-3} and, accordingly,
one can find $R^{*}$  for any fixed final time $T$. Thanks to
Corollary \ref{c-4}, the same holds if $u_{0}$ satisfies
\eqref{i.2}. In the case either $R(x,t)>\rho$ or \eqref{i.2} does
not hold, the final time is restricted with $T^{*}$ (see Theorem
\ref{Th-ex}).

\textit{Throughout this section, keeping in mind of Theorems
\ref{Th-ex}---\ref{t3} and Corollary \ref{c-4}, we assume that $T$ is any
positive but fixed if $R(x,t)\leq \rho$ or $u_{0}$
satisfies \eqref{i.2}; otherwise $T$ is any
positive but restricted with  $T^{**}$ defined via Theorem
\ref{Th-ex1}.}


\subsection{Existence of the optimal control}\label{s8.1}
Here, we focus on the proof of the following result.

\begin{theorem}\label{lem-e}
Let the requirements of Theorem \ref{Th-ex1} hold.  Then there
exists an optimal control $R^* \in \mathcal{U}$ minimizing the
objective functional $J(R)$.
\end{theorem}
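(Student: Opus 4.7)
The plan is to apply the direct method of the calculus of variations. First I note that $\mathcal{U}$ is nonempty: thanks to \eqref{8.1}, the constant function $R_0\equiv \Gamma/(T|\Omega|)$ belongs to $\mathcal{U}$. Moreover, from Theorem~\ref{Th-ex} each admissible $R$ generates a nonnegative weak solution $u_R$, so $J(R)\geq 0$ and $\inf_{R\in\mathcal{U}}J(R)$ is a finite nonnegative number. Hence we can select a minimizing sequence $\{R_n\}\subset\mathcal{U}$ with $J(R_n)\to \inf_{R\in\mathcal{U}}J(R)$.

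Next I would exploit the uniform bound $\|R_n\|_{L^\infty(\Omega_T)}\leq M$ together with the Banach--Alaoglu theorem to extract a subsequence (not relabeled) and $R^*\in L^\infty(\Omega_T)$ such that $R_n\rightharpoonup R^*$ weakly-$*$ in $L^\infty(\Omega_T)$. The membership $R^*\in\mathcal{U}$ follows routinely: testing against nonnegative $L^1$ functions yields $0\leq R^*\leq M$ a.e., while testing against the constant $\mathbf{1}\in L^1(\Omega_T)$ preserves the integral constraint $\int_{\Omega_T}R^*\,dxdt=\Gamma$.

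Now I would establish a compactness result for the associated solutions $u_n:=u_{R_n}$. Since $R_n\in\mathcal{U}$ implies $a_n(x,t)=\rho-R_n(x,t)$ is uniformly bounded in $L^\infty(\Omega_T)$, Theorems~\ref{Th-ex} and \ref{Th-ex1} (or their global counterparts in Theorem~\ref{t3} and Corollary~\ref{c-4}, depending on the case under consideration in the quoted hypothesis on $T$) provide uniform estimates
\[
\|u_n\|_{V^{2}(\Omega_{T})}+\|u_n\|_{L^{\infty}(\Omega_{T})}+\|\partial_t u_n\|_{L^{2}(0,T;(H^{1}(\Omega))^{*})}\leq C,
\]
with $C$ independent of $n$. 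Applying Banach--Alaoglu and the Aubin--Lions lemma yields a (relabeled) subsequence such that $u_n\rightharpoonup u^*$ weakly in $L^2(0,T;H^1(\Omega))$, weakly-$*$ in $L^\infty(\Omega_T)$, $\partial_t u_n\rightharpoonup \partial_t u^*$ weakly in $L^2(0,T;(H^1(\Omega))^*)$, and $u_n\to u^*$ strongly in $L^2(\Omega_T)$ and a.e.\ in $\Omega_T$.

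The key and subtlest step is passing to the limit in the weak formulation \eqref{ident} written for $(R_n,u_n)$. The linear diffusion and time--derivative terms pass to the limit by weak convergence; the $\rho$-part of the reaction requires only the strong $L^2$ convergence of $u_n(1-u_n)\to u^*(1-u^*)$, which follows from the strong $L^2$ convergence of $u_n$ together with the uniform $L^\infty$ bound. The delicate piece is the bilinear product $R_n u_n(1-u_n)$: here the uniform $L^\infty$ bound on $u_n(1-u_n)$ combined with its strong $L^2$ convergence yields strong convergence in $L^1(\Omega_T)$, so that its product with the $L^\infty$-weakly-$*$ convergent sequence $R_n$ converges in the distributional sense to $R^*u^*(1-u^*)$. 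Hence $u^*$ is a weak solution of \eqref{c-1}--\eqref{c-3} corresponding to $R^*$, and by the uniqueness asserted in Theorem~\ref{Th-ex} we identify $u^*=u_{R^*}$. Finally, weak convergence of $u_n$ in $L^1(\Omega_T)$ gives $J(R_n)\to J(R^*)$, so $J(R^*)=\inf_{R\in\mathcal{U}}J(R)$, completing the proof. The main obstacle is precisely the passage to the limit in the nonlinear product $R_n u_n(1-u_n)$, which is resolved by combining weak-$*$ convergence of $R_n$ with Aubin--Lions strong compactness of $u_n$ inherited from the uniform estimates of Theorems~\ref{Th-ex}--\ref{Th-ex1}.
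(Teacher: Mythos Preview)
Your proposal is correct and follows essentially the same direct-method approach as the paper: take a minimizing sequence, extract a weak-$*$ limit $R^*\in\mathcal{U}$, use the uniform a priori estimates on $u_n$ together with Aubin--Lions compactness, and pass to the limit in the weak formulation by splitting the nonlinear term so as to pair the weak-$*$ convergence of $R_n$ with the strong $L^2$ convergence of $u_n(1-u_n)$. The only cosmetic difference is that the paper also derives an explicit upper bound \eqref{j-1} for $J(R)$ via \eqref{8.2}, whereas you (correctly) observe that the lower bound $J(R)\geq 0$ already suffices for the infimum to be finite.
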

\begin{proof}
To prove this claim we will exploit the following technique
containing two principal stages. Since  the existence of the optimal
control $R^{*}$ means the existence of the minimum of the objective
functional, the necessary condition providing it is the boundedness
of $J(R)$. Thus, the first stage in our arguments is the
verification of  the estimate
\begin{equation}\label{j-1}
0 \leqslant J(R) \leqslant \frac{(e^{(\rho+M) (1+C_0) \,T }
-1)|\Omega| \|u_0\|_{L^{\infty}(\Omega)} }{(\rho+M)(1+C_0) }
\end{equation}
for each $R\in\mathcal{U}$ and any fixed positive $T$.

 As for the second step, taking into account this bound, we can
conclude that the minimum of $J(R)$ attains  via  the limit of a
minimizing sequence $\{R_{n}\}_{n\in\mathbb{N}},$
$R_{n}\in\mathcal{U},$ i.e.
\begin{equation*}\label{j-2}
\underset{n\to+\infty}{\lim}J(R_{n})=\underset{R\in\mathcal{U}}{\inf}J(R).
\end{equation*}
Then, employing $R_{n}$, we construct  the corresponding sequence of
 nonnegative weak solutions $u_{n}=u_{R_{n}}(x,t)$ to
\eqref{c-1}--\eqref{c-3} with $a_{n}(x,t)=\rho-R_{n}(x,t)$. After
that, passing to the limit in $u_{n}$ and noting  this limit as
$u^{*}=u^{*}(x,t)$, we aim to obtain the following relations
\begin{equation}\label{8.4}
\underset{R\in\mathcal{U}}{\inf}J(R)=\underset{n\to+\infty}{\lim}\int_{\Omega_{T}}u_{n}(x,t)dxdt=\int_{\Omega_{T}}u^{*}(x,t)dxdt=J(R^{*}),
\end{equation}
which in turn complete the proof of Theorem \ref{lem-e}.

\noindent\textit{Step I: Verification of \eqref{j-1}.} Working
within the assumptions of Theorem \ref{lem-e}, we can utilize either
Theorem \ref{t3} or \ref{Th-ex} (depending on the sign of $a(x,t)$),
which immediately provides the one-valued
 weak solvability of \eqref{c-1}--\eqref{c-3} for each
$R\in\mathcal{U}$. Besides, $u_{R}=u_{R}(x,t)$ is nonnegative and
bounded, i.e.
\[
\|u_{R}\|_{L^{\infty}(\Omega_{T})}\leq C_{0}.
\]
Obviously, the nonnegativity of $u_{R}$  entails the nonnegativity
of $J(R)$.

Coming to the proof of the right inequality in \eqref{j-1}, it is a
simple consequence of the following bound to the function $u_{R}$.
\begin{equation}\label{8.2}
\int \limits_{\Omega } { u_{R}(x,\tau)  \, dx  } \leqslant e^{\|a
\|_{L^\infty(\Omega_{T})} (1+C_0) \,\tau } \int \limits_{\Omega } {
u_0(x)  \, dx  }
\end{equation}
for each $\tau \in [0,T].$

To verify this bound, we multiply \eqref{c-1} by $u_{R}$ and, after
integrating over $\Omega_{\tau}$ and performing the standard
calculations, we deduce
\begin{align*}
\int_{\Omega}u_{R}(x,\tau)dx&\leq
\int_{\Omega}u_{0}(x)dx+\int_{\Omega_{\tau}}a(x,t)u_{R}(1-u_{R})dxdt\\&\leq
\int_{\Omega}u_{0}(x)dx+\|a\|_{L^{\infty}(\Omega_{T})}(1+C_{0})\int_{\Omega_{\tau}}u_{R}(x,t)dxdt\\&
\leq
\int_{\Omega}u_{0}(x)dx+[\rho+M](1+C_{0})\int_{\Omega_{\tau}}u_{R}(x,t)dxdt
\end{align*}
with any $\tau\in[0,T]$. Here, we utilized the nonnegativity and the
boundedness of $u_{R}$ and the easily verified estimate
 \[
\|a\|_{L^{\infty}(\Omega_{T})}\leq M+\rho
\]
for each $R\in\mathcal{U}$.

 In fine, applying  Gr\"{o}nwall Lemma \cite{Gr} ends up
with the searched bound, which completes the proof of \eqref{8.2}
and, accordingly, of the boundedness of $J(R)$.

\noindent\textit{Step II: The Proof of \eqref{8.4}.} Coming to the
sequences $\{R_{n}\}_{n\in\mathbb{N}}$ and
$\{u_{n}\}_{n\in\mathbb{N}}$, we apply \eqref{j-1} and Theorem~\ref{Th-ex1} or \ref{t3} and get (after passing to a subsequence):

\noindent$\bullet$ there exists $R^{*}\in L^{\infty}(\Omega_{T})$
satisfying the relations
\begin{equation}\label{j-8}
R_{n}\underset{n\to+\infty}{\rightarrow}R^{*}\quad \text{weakly-*
in}\quad L^{\infty}(\Omega_{T})\quad\text{and} \quad
\int_{\Omega_{T}}R^{*}dxdt=\Gamma;
\end{equation}
\noindent$\bullet$ $u_{n}$ is the unique nonnegative weak solution
of \eqref{c-1}--\eqref{c-3} with $R_{n}$ in place $R$ in the
coefficient $a(x,t)$, and
\begin{equation}\label{j-3}
\|u_{n}\|_{V^{2}(\Omega_{T})}+\|\partial_{t}u_{n}\|_{L^{2}(0,T;(H^{1}(\Omega))^{*})}+
\|u_{n}\|_{L^{\infty}(\Omega_{T})}\leq C^{**}
\end{equation}
with positive $C^{**}$ depending only on
$\|u_{0}\|_{L^{\infty}(\Omega)},$ the Lebesgue measure of $\Omega$,
and the parameters of $d,d_{0},\rho$ and $M$.

Moreover, inequality \eqref{j-3} provides the existence of $u^{*}\in
V_{2}(\Omega_{T})\cap L^{\infty}(\Omega_{T})$ such that
\begin{equation}\label{j-4}
\begin{cases}
u_{n}\underset{n\to+\infty}{\rightarrow}
u^{*}\qquad\quad\text{weakly in}
\quad L^{2}(0,T;H^{1}(\Omega)),\\
u_{n}\underset{n\to+\infty}{\rightarrow}
u^{*}\qquad\quad\text{weakly-* in}
\quad L^{\infty}(0,T;L^{2}(\Omega)),\\
\partial_{t}u_{n}\underset{n\to+\infty}{\rightarrow}
\partial_{t}u^{*}\quad\text{ weakly in}
\quad L^{2}(0,T; (H^{1}(\Omega))^{*}),\\
u_{n}\underset{n\to+\infty}{\rightarrow} u^{*}\qquad\quad\text{
strongly in} \quad L^{2}(\Omega_{T})\quad\text{and a.e. in}\quad
\Omega_{T},
\end{cases}
\end{equation}
and hence, the bound \eqref{j-3} holds with $u^{*}$ in place
$u_{n}$.

 In order to prove that $R^{*}$ solves the optimal control
problem, we are left to verify that

\noindent(i) $R^{*}\in\mathcal{U};$

\noindent(ii) $u^{*}$ is the unique weak solution of
\eqref{c-1}--\eqref{c-3} with $R^{*}$ in place $R$.

\noindent To examine (i), bearing in mind \eqref{j-8} and the
definition of $\mathcal{U}$, we just need to show that
\[
\text{meas } \mathfrak{G}=\text{meas }\{(x,t)\in\Omega_{T}:\quad
R^{*}(x,t)>M \}=0.
\]
Here, arguing by contradiction, that is assuming  $\text{meas }
\mathfrak{G}>0$ and keeping in mind $R_{n}\in\mathcal{U}$, we arrive
at
\[
0\leq
\underset{n\to+\infty}{\lim}\int_{\mathfrak{G}}(M-R_{n}(x,t))dxdt=\int_{\mathfrak{G}}(M-R^{*}(x,t))dxdt<0.
\]
This contradiction can be fixed only if $|\mathfrak{G}|=0$, which
means
\[
R^{*}(x,t)\leq M\quad\text{a.e. in}\quad\Omega_{T}.
\]
The similar arguments provides the nonnegativity $R^{*}$ a.e. in
$\Omega_{T}$. Summing up, we end up with $R^{*}\in\mathcal{U}$.

\noindent$\bullet$ Coming to the verification of the statement in
(ii) and taking into account \eqref{j-3} and \eqref{j-4}, we need
only to examine that $u^{*}$ satisfies \eqref{ident}.  For each
$n\in\mathbb{N}$, $u_{n}$ solves  (in a weak sense)
\eqref{c-1}--\eqref{c-3} with $R_{n}$ in place $R$. In particular,
that means for any $\phi\in L^{2}(0,T;H^{1}(\Omega))$ and each
$n\in\mathbb{N}$ the identity holds
\[
\int \limits_{0}^T \langle \partial_t u_n, \phi \rangle \, dt  +
\int \limits_{\Omega_T}  D (x)  \nabla u_n   \nabla \phi \,dx dt =
\int \limits_{\Omega_T}  (\rho - R_n(x,t) )  u_n  (1 - u_n ) \phi
\,dx dt .
\]
The weak convergence established in \eqref{j-4} provides that
$$
\int \limits_{\Omega_T}  D (x)  \nabla u_n   \nabla \phi \,dx dt
\mathop{\rightarrow} \limits_{n \to +\infty} \int \limits_{\Omega_T}
 D (x)  \nabla u^*   \nabla \phi \,dx dt ,
$$
$$
\int \limits_{0}^T  \langle \partial_t u_n, \phi \rangle  \, dt
\mathop{\rightarrow} \limits_{n \to +\infty} \int \limits_{0}^T
\langle \partial_t u^*, \phi \rangle\, dt .
$$
Now, we are left to justify the corresponding convergence in the
nonlinear term. Indeed, it is a simple consequence of the following
equalities:
\begin{equation}\label{8.5}
\underset{n\to+\infty}{\lim} l_{n}^{1}=0\quad\text{and}\quad
\underset{n\to+\infty}{\lim} l_{n}^{2}=0,
\end{equation}
where we set
\begin{align*}
l_{n}^{1}&=\int_{\Omega_{T}}(R-R_{n})u^{*}(1-u^{*})\phi dxdt,\\
l_{n}^{2}&=\int_{\Omega_{T}}R_{n}[u_{n}(1-u_{n})-u^{*}(1-u^{*})]\phi
dxdt.
\end{align*}
Obviously, the first equality in \eqref{8.5} follows immediately
from the weak-* convergence of  $R_{n}$ (see \eqref{j-8}) and the
belonging $u^{*}(1-u^{*})\phi$ to $L^{1}(\Omega_{T})$, the latter is
provided with the following bound
\[
\int_{\Omega_{T}}|u^{*}||1-u^{*}||\phi|dxdt\leq
C^{**}(1+C^{**})\|\phi\|_{L^{2}(\Omega_{T})}\sqrt{|\Omega|T}.
\]
As for the second equality in \eqref{8.5},  keeping in mind
$R_{n}\in\mathcal{U}$ and \eqref{j-3}, we have
\begin{align*}
|l_{n}^{2}|&\leq M\|u_{n}-u^{*}\|_{L^{2}(\Omega_{T})}\|\phi
\|_{L^{2}(\Omega_{T})}+M\int_{\Omega_{T}}|u^{*2}-u_{n}^{2}||\phi|dxdt\\
& \leq M\|u_{n}-u^{*}\|_{L^{2}(\Omega_{T})}\|\phi
\|_{L^{2}(\Omega_{T})}[1+2C^{**}|\Omega|T]\\
&
\equiv C \|u_{n}-u^{*}\|_{L^{2}(\Omega_{T})}\|\phi
\|_{L^{2}(\Omega_{T})}.
\end{align*}
Here, we again used estimate \eqref{j-3}.

Finally, collecting this estimate with the strong convergence of
$u_{n}$ in $L^{2}(\Omega_{T})$ ends ups with the desired equality.
Thus, we claim that $u^{*}$ satisfies \eqref{ident}. We notice that
the uniqueness of $u^{*}$ is verified with the arguments of Section
\ref{s4.2*}. This completes the proof of Theorem \ref{lem-e}.
\end{proof}


\subsection{Properties of the optimal control: the sensitivity, the necessary condition and the uniqueness}\label{s8.2}
Once the existence of an optimal control is claimed, the next aim is
devising a necessary condition for a control to be optimal. To this
end, we first focus on the differentiability of the mapping $R
\mapsto u_{R}$ with $R\in\mathcal{U}$ and $u_{R}$ being the unique
weak solution of \eqref{c-1}--\eqref{c-3}. To define this
derivative, so-called  the \emph{sensitivity}, we introduce a
function $\Psi_{\varepsilon}$ for each positive $\varepsilon$ and
every $R\in\mathcal{U}$ as
\[
\Psi_{\varepsilon}=\frac{u_{R_{\varepsilon}}-u_{R}}{\varepsilon},
\]
where
\[
R_{\varepsilon}=R+\varepsilon\mathcal{R}
\]
with $\mathcal{R}\in L^{\infty}(\Omega_{T})$, and
$u_{R_{\varepsilon}}$ is a weak solution of \eqref{c-1}--\eqref{c-3}
with $R_{\varepsilon}$ in place $R$.

Due to the boundedness of $R$ and $\mathcal{R}$, there holds
\[
\underset{\varepsilon\to 0}{\lim}\,R_{\varepsilon}=R.
\]

\begin{proposition}\label{p8.1}
Let assumptions of Theorem \ref{Th-ex1} hold. Then
$\Psi_{\varepsilon}\in V_{2}(\Omega_{T})\cap L^{\infty}(\Omega_{T})$
for each positive $\varepsilon$, in particular,
\[
\|\Psi_{\varepsilon}\|_{V^{2}(\Omega_{T})}+\|\Psi_{\varepsilon}\|_{L^{\infty}(\Omega_{T})}+\|\partial_{t}\Psi_{\varepsilon}\|_{L^{2}(0,T;(H^{1}(\Omega))^{*})}\leq
C,
\]
where the positive $C$ is independent of
$\varepsilon\in(0,\varepsilon_{0}]$ for any fixed positive
$\varepsilon_{0}$. Besides,
\begin{equation*}\label{jjj}
J(R_{\varepsilon}) = J(R) + \varepsilon \int \limits_{\Omega_{T}}
\Psi_{\varepsilon}(x,t) \,dx dt .
\end{equation*}
\end{proposition}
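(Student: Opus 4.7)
The plan is to identify the linear parabolic problem satisfied by $\Psi_{\varepsilon}$ and to read off the desired uniform estimates from the energy/Stampacchia machinery already developed for \eqref{c-1}--\eqref{c-3}. First I would subtract the weak formulation \eqref{ident} for $u_{R_{\varepsilon}}$ from the one for $u_{R}$ and divide by $\varepsilon$. Using the elementary algebraic identity
\[
u_{R_{\varepsilon}}(1-u_{R_{\varepsilon}}) - u_{R}(1-u_{R}) = (u_{R_{\varepsilon}} - u_{R})\bigl(1 - u_{R_{\varepsilon}} - u_{R}\bigr),
\]
one gets that $\Psi_{\varepsilon}$ is a weak solution of the linear problem
\[
\begin{cases}
\partial_{t}\Psi_{\varepsilon} - \nabla(D(x)\nabla\Psi_{\varepsilon}) = b_{\varepsilon}(x,t)\,\Psi_{\varepsilon} + g_{\varepsilon}(x,t) & \text{in }\Omega_{T},\\
\nabla\Psi_{\varepsilon}\cdot\mathbf{n}=0 & \text{on }\partial\Omega_{T},\\
\Psi_{\varepsilon}(\cdot,0)=0 & \text{in }\Omega,
\end{cases}
\]
with $b_{\varepsilon}(x,t)=(\rho-R(x,t))(1-u_{R_{\varepsilon}}-u_{R})$ and $g_{\varepsilon}(x,t)=-\mathcal{R}(x,t)\,u_{R_{\varepsilon}}(1-u_{R_{\varepsilon}})$. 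The zero initial value is immediate because $u_{R}(\cdot,0)=u_{R_{\varepsilon}}(\cdot,0)=u_{0}$.

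The crucial input is that Theorem~\ref{Th-ex1} (or Theorem~\ref{t3} when applicable) applied to both $u_{R}$ and $u_{R_{\varepsilon}}$ delivers a common $L^{\infty}(\Omega_{T})$-bound $C_{0}$, valid uniformly for all $R_{\varepsilon}\in\mathcal{U}$ with $\varepsilon\in(0,\varepsilon_{0}]$, since the admissible coefficients satisfy $\|a\|_{L^{\infty}(\Omega_{T})}\leq \rho+M+\varepsilon_{0}\|\mathcal{R}\|_{L^{\infty}(\Omega_{T})}$. Hence $\|b_{\varepsilon}\|_{L^{\infty}(\Omega_{T})}\leq (\rho+M+\varepsilon_{0}\|\mathcal{R}\|_{L^{\infty}})(1+2C_{0})$ and $\|g_{\varepsilon}\|_{L^{\infty}(\Omega_{T})}\leq \|\mathcal{R}\|_{L^{\infty}(\Omega_{T})}C_{0}(1+C_{0})$, both independent of $\varepsilon$. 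Testing the equation by $\Psi_{\varepsilon}$, integrating in $\Omega_{\tau}$, exploiting \eqref{2.2}, and then applying Gr\"onwall's lemma in its classical form yields
\[
\|\Psi_{\varepsilon}\|_{V^{2}(\Omega_{T})} \leq C
\]
with $C$ independent of $\varepsilon\in(0,\varepsilon_{0}]$.

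Next I would obtain the $L^{\infty}$ bound by the same Stampacchia truncation used in Section~\ref{s5}: since $\Psi_{\varepsilon}(\cdot,0)=0$ and the coefficients $b_{\varepsilon},g_{\varepsilon}$ are uniformly bounded, testing with $(\Psi_{\varepsilon}-k)_{+}$ and with $(-\Psi_{\varepsilon}-k)_{+}$ in turn (now the linearity and the zero initial datum simplify the argument considerably, since no additional term involving $k$ appears on the right) and then invoking the classical Stampacchia Lemma \cite[Lemma~4.1]{S1} produces $\|\Psi_{\varepsilon}\|_{L^{\infty}(\Omega_{T})}\leq C$ uniformly. The bound on $\|\partial_{t}\Psi_{\varepsilon}\|_{L^{2}(0,T;(H^{1}(\Omega))^{*})}$ is then read off the equation: for any $w\in H^{1}(\Omega)$ with $\|w\|_{H^{1}(\Omega)}\leq 1$, H\"older's inequality together with \eqref{2.2} gives
\[
|\langle\partial_{t}\Psi_{\varepsilon},w\rangle| \leq \|D\|_{L^{\infty}}\|\nabla\Psi_{\varepsilon}\|\,\|\nabla w\| + \|b_{\varepsilon}\|_{L^{\infty}}\|\Psi_{\varepsilon}\|\,\|w\| + \|g_{\varepsilon}\|\,\|w\|,
\]
and squaring and integrating in time reduces the estimate to the already-controlled $V^{2}$-norm of $\Psi_{\varepsilon}$. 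The main (and only mildly delicate) obstacle in the whole argument is to ensure that every constant entering the energy and Stampacchia estimates depends only on $\rho,M,\|\mathcal{R}\|_{L^{\infty}}, d_{0}, d, |\Omega|, T$ and $C_{0}$, and not on $\varepsilon$; this is exactly where the uniform $L^{\infty}$-bound from Theorem~\ref{Th-ex1} is decisive.

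Finally, the expansion of $J$ is immediate from the definition of $\Psi_{\varepsilon}$. Indeed,
\[
J(R_{\varepsilon}) - J(R) = \int_{\Omega_{T}} \bigl(u_{R_{\varepsilon}}(x,t)-u_{R}(x,t)\bigr)\,dx\,dt = \varepsilon \int_{\Omega_{T}} \Psi_{\varepsilon}(x,t)\,dx\,dt,
\]
which gives the last claim of the proposition. The integrals are finite thanks to the $L^{\infty}$-bounds on $u_{R}$ and $u_{R_{\varepsilon}}$, hence the proposal is complete.
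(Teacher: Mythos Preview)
Your proposal is correct and follows essentially the same approach as the paper: derive the linear parabolic problem satisfied by $\Psi_{\varepsilon}$ by subtracting the two weak formulations, observe that the coefficients and source are bounded in $L^{\infty}(\Omega_{T})$ uniformly in $\varepsilon\in(0,\varepsilon_{0}]$ thanks to Theorem~\ref{Th-ex1}, and then invoke standard linear parabolic estimates. The paper simply cites \cite[Chapter~3]{LSU} at that point, whereas you spell out the energy/Stampacchia/duality steps explicitly; your parenthetical remark that ``no additional term involving $k$ appears on the right'' is not quite accurate (the term $b_{\varepsilon}\Psi_{\varepsilon}(\Psi_{\varepsilon}-k)_{+}$ still produces a $k\!\int(\Psi_{\varepsilon}-k)_{+}$ contribution), but this is harmless since the linear-in-$k$ term is exactly what the classical Stampacchia lemma handles.
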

\begin{proof}
Obviously, the last equality in this proposition is a simple
consequence of the straightforward calculations and properties of
the function $\Psi_{\varepsilon}$ established in this claim. Hence,
we are left to verify the regularity of $\Psi_{\varepsilon}$, i.e.
$\Psi_{\varepsilon}\in V_{2}(\Omega_{T})\cap
L^{\infty}(\Omega_{T})$.

Recalling that $u_{R}$ is a (unique) weak solution of
\eqref{c-1}--\eqref{c-3} while $u_{\varepsilon}=u_{R_{\varepsilon}}$
solves (in a weak sense) the problem
\begin{equation*}\label{j-11}
\begin{cases}
\partial_{t}u_{\varepsilon} - \nabla (D(x) \nabla u_\varepsilon ) = (a(x,t) - \varepsilon \mathcal{R} )
u_{\varepsilon}(1-u_{\varepsilon}) \qquad   \text{ in }\quad \Omega_T, \\
\nabla u_\varepsilon \cdot \textbf{n}  = 0  \qquad\quad\text{on}\quad \partial \Omega_T, \\
 u_\varepsilon (x,0) = u_0\qquad \text{ in }\quad \Omega.
\end{cases}
\end{equation*}
Clearly,
\[
\|a-\varepsilon\mathcal{R}\|_{L^{\infty}(\Omega_{T})}\leq
\|a\|_{L^{\infty}(\Omega_{T})}+\varepsilon\|\mathcal{R}\|_{L^{\infty}(\Omega_{T})}\leq
\rho+
\|R\|_{L^{\infty}(\Omega_{T})}+\varepsilon_{0}\|\mathcal{R}\|_{L^{\infty}(\Omega_{T})}.
\]
Then working within the assumptions of Theorem \ref{Th-ex1}, we
conclude that
\begin{equation*}\label{8.6}
\|u_{\varepsilon}\|_{V^{2}(\Omega_{T})}+\|u_{\varepsilon}\|_{L^{\infty}(\Omega_{T})}+\|\partial_{t}u_{\varepsilon}\|_{L^{2}(0,T;(H^{1}(\Omega))^{*})}\leq
C
\end{equation*}
with the positive quantity $C$ being independent of
$\varepsilon\in(0,\varepsilon_{0}]$ and, besides, the same estimate
holds for $u_{R}$ in place $u_{\varepsilon}$.

Collecting these facts with the representation of
$\Psi_{\varepsilon}$, we arrive at the initial-boundary value
problem for the linear parabolic equation with bounded coefficients
and the right-hand side being independent of
$\varepsilon\in(0,\varepsilon_{0}]$:
\begin{equation*}\label{j-12}
\begin{cases} \partial_{t}\Psi_{\varepsilon} - \nabla (D(x) \nabla
\Psi_\varepsilon ) - a(x,t)(1-u_{R}-u_\varepsilon)\Psi_\varepsilon =
- \mathcal{R}
u_\varepsilon (1-u_\varepsilon)\quad \text{ in } \Omega_T, \\
\nabla \Psi_\epsilon \cdot \textbf{n}  = 0 \qquad\quad \text{ on } \partial \Omega_{T}, \\
\Psi_\epsilon (x,0) = 0 \qquad\quad  \text{ in } \Omega.
\end{cases}
\end{equation*}
In fine, exploiting the standard theory of linear parabolic
equations to this problem (see, e.g. \cite[Chapter 3]{LSU}), we end
up with the one-valued weak solvability satisfying the desired
bound. This completes the proof of Proposition \ref{p8.1}.
\end{proof}
\begin{definition}\label{d2}
For each $R\in\mathcal{U},$ the derivative of the mapping $R\mapsto
u_{R}$,
 denoted by $\Psi,$ is called the weakly limit of
$\Psi_{\varepsilon}$ (with $R_{\varepsilon}\in\mathcal{U}$) in
$L^{2}(0,T;H^{1}(\Omega))$ as $\varepsilon$ tends to zero.
\end{definition}
Collecting arguments of Proposition \ref{p8.1} with the proof of
Theorem \ref{Th-ex} provides immediately the following claim.
\begin{lemma}\label{lem-s}
Under assumptions of Theorem \ref{Th-ex1},  there is the
\textit{sensitivity} $\Psi$ of
  the mapping $R \mapsto u_R $ with each $R\in\mathcal{U}$, which solves (in a weak sense) the following
problem
\begin{equation}\label{j-10}
\begin{cases}
\Psi_{t} - \nabla (D(x) \nabla \Psi ) - a(x,t)(1-2u_{R}) \Psi = - \mathcal{R} u_{R}(1-u_{R}) \quad \text{ in } \Omega_T, \\
\nabla \Psi \cdot \textbf{n}  = 0  \qquad\text{ on } \partial \Omega_{T}, \\
\Psi(x,0) = 0 \qquad\text{ in } \Omega.
\end{cases}
\end{equation}
Moreover,
\[
0\leq \underset{\varepsilon\to
0}{\lim}\frac{J(R_{\varepsilon})-J(R)}{\varepsilon}\bigg|_{R=R^{*}}=\int_{\Omega_{T}}\Psi(x,t)dxdt
\]
for any $R_{\varepsilon}\in \mathcal{U}$.
\end{lemma}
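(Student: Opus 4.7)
The proof proceeds in three stages: extracting the weak limit $\Psi$ of $\Psi_\varepsilon$ via compactness, passing to the limit in the linear parabolic equation satisfied by $\Psi_\varepsilon$ to obtain \eqref{j-10}, and deducing the formula for the derivative of $J$ together with its sign at the optimal control.

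First, I invoke Proposition \ref{p8.1} to secure the uniform bound on $\Psi_\varepsilon$ in $V^{2}(\Omega_T)\cap L^{\infty}(\Omega_T)$ together with $\partial_t \Psi_\varepsilon$ bounded in $L^2(0,T;(H^1(\Omega))^*)$, all independent of $\varepsilon\in(0,\varepsilon_0]$. Standard compactness arguments (Banach-Alaoglu combined with the Aubin-Lions-Simon lemma, exactly as in Subsection \ref{s4.2*}) then yield a subsequence (not relabeled) converging to some $\Psi$ weakly in $L^2(0,T;H^1(\Omega))$, weakly-$*$ in $L^\infty(\Omega_T)$, strongly in $L^2(\Omega_T)$ and a.e. in $\Omega_T$, with $\partial_t \Psi_\varepsilon \to \partial_t \Psi$ weakly in $L^2(0,T;(H^1(\Omega))^*)$. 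This $\Psi$ is the sensitivity in the sense of Definition \ref{d2}.

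Second, by subtracting the weak form for $u_R$ from that for $u_\varepsilon$ and dividing by $\varepsilon$, I find that $\Psi_\varepsilon$ satisfies, for every test $\phi \in L^2(0,T;H^1(\Omega))$,
\begin{align*}
&\int_0^T \langle \partial_t \Psi_\varepsilon, \phi \rangle\, dt + \int_{\Omega_T} D(x) \nabla \Psi_\varepsilon \cdot \nabla \phi \, dx dt\\
&\qquad = \int_{\Omega_T} a(x,t)(1 - u_R - u_\varepsilon)\Psi_\varepsilon \phi \, dx dt - \int_{\Omega_T} \mathcal{R}\, u_\varepsilon(1-u_\varepsilon) \phi \, dx dt.
\end{align*}
The key observation is that the identity $u_\varepsilon - u_R = \varepsilon\, \Psi_\varepsilon$ together with the uniform $L^\infty$ bound on $\Psi_\varepsilon$ yields $u_\varepsilon \to u_R$ strongly in $L^p(\Omega_T)$ for every $p<\infty$ (by the dominated convergence theorem applied with the common $L^\infty$ bound from Theorem \ref{Th-ex1}). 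Hence the coefficient $a(x,t)(1-u_R-u_\varepsilon)$ converges strongly in $L^p(\Omega_T)$ to $a(x,t)(1-2u_R)$, and, combined with the weak convergence of $\Psi_\varepsilon$ in $L^2(\Omega_T)$, the corresponding product passes to the limit. The source term $\mathcal{R}\, u_\varepsilon(1-u_\varepsilon)$ likewise converges strongly in $L^2(\Omega_T)$ to $\mathcal{R}\, u_R(1-u_R)$. Letting $\varepsilon\to 0$ in the weak formulation produces exactly \eqref{j-10}, with the initial condition $\Psi(\cdot,0)=0$ preserved through the continuous embedding \eqref{i.1}, and the homogeneous Neumann condition encoded by the test space.

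Third, by Proposition \ref{p8.1},
\[
\frac{J(R_\varepsilon)-J(R)}{\varepsilon} = \int_{\Omega_T} \Psi_\varepsilon(x,t)\, dx dt,
\]
and testing the weak convergence of $\Psi_\varepsilon$ against the constant function $1\in L^2(\Omega_T)$ gives
\[
\lim_{\varepsilon\to 0}\frac{J(R_\varepsilon)-J(R)}{\varepsilon} = \int_{\Omega_T} \Psi(x,t)\, dx dt.
\]
Evaluated at $R=R^*$ for any admissible perturbation $R_\varepsilon\in\mathcal{U}$, the optimality established in Theorem \ref{lem-e} forces $J(R_\varepsilon)\geq J(R^*)$, whence the limit is nonnegative. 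The main delicacy in this scheme lies in the passage to the limit in the product $a(x,t)(1-u_R-u_\varepsilon)\Psi_\varepsilon$, and this is resolved precisely by the trivial identity $u_\varepsilon-u_R=\varepsilon\Psi_\varepsilon$, which converts the weak control on $\Psi_\varepsilon$ into strong convergence of $u_\varepsilon$ towards $u_R$.
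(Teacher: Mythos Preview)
Your proposal is correct and follows the same approach the paper indicates: it merely states that the claim is obtained by ``collecting arguments of Proposition~\ref{p8.1} with the proof of Theorem~\ref{Th-ex},'' and you have supplied exactly those details --- uniform bounds from Proposition~\ref{p8.1}, compactness as in Subsection~\ref{s4.2*}, and passage to the limit in the linear equation for $\Psi_\varepsilon$. The only minor point worth noting is that you extract a subsequence, whereas the full sequence $\Psi_\varepsilon$ converges because the limit problem \eqref{j-10} is linear with bounded coefficients and hence uniquely solvable; this is what makes the sensitivity in Definition~\ref{d2} well defined as a genuine limit rather than a subsequential one.
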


Now, we characterize our optimal control solution $R^*$ by
differentiating the map $R \mapsto J(R)$. We use the sensitivity
equation to find the so-called "adjoint" problem.  Indeed,
performing the formal ``adjoint''  analysis  in the sensitivity
equation (\ref{j-10}) at $R=R^*$ and exploiting Lemma \ref{lem-s},
we end up with the "adjoint" problem to \eqref{c-1}--\eqref{c-3} for
given optimal control $R^{*}$ and corresponding state $u^{*}$,
\begin{equation}\label{j-13}
\begin{cases}
\Phi_{t} + \nabla (D(x) \nabla \Phi ) + (\rho - R^*)(1-2u^*) \Phi = - 1 \quad \text{ in } \Omega_T, \\
\nabla \Phi \cdot \textbf{n}  = 0  \quad\text{ on } \partial \Omega_T, \\
\Phi(x,T) = 0 \quad\text{ in } \Omega.
\end{cases}
\end{equation}
It is worth noting that the inhomogeneous term, $-1$, in the
equation comes from the integrand of the objective functional with
the respect to state; while the final time condition on the adjoint
function is the transversality condition.

Introducing the new unknown function $\Theta(x,t)=\Phi(x,T-t)$ , we
reduce  \eqref{j-13} to the linear initial-boundary value problem to
a parabolic equation with the  coefficient $b(x,t) = (\rho -
R^*(x,T-t))(1-2u^*(x,T-t)),$
\begin{equation}\label{j-14}
\begin{cases}
 \Theta_{t} - \nabla (D(x) \nabla \Theta) - b(x,t) \Theta  =  1 \quad \text{ in } \Omega_T, \\
\nabla \Theta \cdot \textbf{n}  = 0 \quad \text{ on } \partial \Omega_T, \\
\Theta(x,0) = 0 \quad \text{ in } \Omega,
\end{cases}
\end{equation}
It is worth noting that   Theorem \ref{Th-ex1} provides  the
boundedness in $\Omega_{T}$ of the coefficient $b(x,t)$.

After that, applying the arguments leading to Theorems \ref{Th-ex}
and \ref{Th-ex1} (with exploiting the classical Gr\"{o}nwall lemma
in place the nonlinear Gr\"{o}nwall inequality) to problem
\eqref{j-14} and then returning to the function $\Phi$, we claim the
following result to solution of the problem \eqref{j-13}.
\begin{lemma}\label{Th-opt}
For given  optimal control $R^*$ and the corresponding state $u^*$,
there exists a unique weak non-negative solution $\Phi \in
V^2(\Omega_{T})$ with  $
\partial_t \Phi \in L^2(0, T; (H^1(\Omega))^*)$ and
\[
\|\Phi\|_{ L^{\infty}(\Omega_{T})}\leq C_{\Phi},
\]
where $C_{\phi}$ depends only on the given parameters and the
corresponding norms of the given functions in
\eqref{c-1}--\eqref{c-3}.
\end{lemma}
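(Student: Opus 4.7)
The plan is to prove the lemma by analyzing the time-reversed forward problem \eqref{j-14} and translating the conclusions back to $\Phi$ via $\Phi(x,t)=\Theta(x,T-t)$. First I would verify that the coefficient $b(x,t)=(\rho-R^{*}(x,T-t))(1-2u^{*}(x,T-t))$ belongs to $L^{\infty}(\Omega_{T})$: the bound $\|R^{*}\|_{L^{\infty}(\Omega_{T})}\leq M$ follows from $R^{*}\in\mathcal{U}$, while Theorem~\ref{Th-ex1} (or Theorem~\ref{t3}, depending on the setting fixed at the start of Section~\ref{s8}) gives $\|u^{*}\|_{L^{\infty}(\Omega_{T})}\leq C_{0}$, so that $\|b\|_{L^{\infty}(\Omega_{T})}\leq (\rho+M)(1+2C_{0})$. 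All subsequent constants will depend only on $d$, $d_{0}$, $|\Omega|$, $T$, $\rho$, $M$ and $C_{0}$, which is precisely what the statement requires.

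For existence, I would deploy the Faedo-Galerkin scheme of Section~\ref{s4.1} applied to \eqref{j-14}. Since the equation is \emph{linear} in $\Theta$ with bounded coefficients and bounded forcing $\equiv 1$, the resulting ODE system for the coefficients $\psi_{i}(t)$ is linear with continuous data, hence globally solvable on $[0,T]$ with no smallness restriction on $T$. Testing the Galerkin equation with $\Theta^{N}$ itself and using \eqref{2.2} together with the $L^{\infty}$ bound on $b$, one obtains
\[
\tfrac{1}{2}\tfrac{d}{dt}\|\Theta^{N}\|^{2}+d_{0}\|\nabla\Theta^{N}\|^{2}\leq\bigl(\|b\|_{L^{\infty}}+\tfrac{1}{2}\bigr)\|\Theta^{N}\|^{2}+\tfrac{|\Omega|}{2},
\]
and the classical Grönwall inequality yields a uniform bound in $V^{2}(\Omega_{T})$. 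The bound on $\partial_{t}\Theta^{N}$ in $L^{2}(0,T;(H^{1}(\Omega))^{*})$ is obtained by duality, testing against $w\in H^{1}(\Omega)$ with $\|w\|_{H^{1}}\leq 1$ exactly as in Subsection~\ref{s4.2}, but far more easily because no cubic term appears. Passing to the limit along a subsequence as in Subsection~\ref{s4.2*} produces a weak solution $\Theta$ with the stated regularity.

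For nonnegativity, I would mimic Subsection~\ref{s4.3}: testing \eqref{j-14} with $-\Theta^{-}:=-\max\{-\Theta,0\}$, using the Neumann condition to integrate by parts, and exploiting the source $+1$ which contributes the favourable term $-\int\Theta^{-}\,dx\leq 0$, one reaches
\[
\tfrac{d}{dt}\|\Theta^{-}\|^{2}+2d_{0}\|\nabla\Theta^{-}\|^{2}\leq 2\|b\|_{L^{\infty}}\|\Theta^{-}\|^{2},\qquad \Theta^{-}(\cdot,0)=0,
\]
so $\Theta^{-}\equiv 0$ by Grönwall, i.e.\ $\Theta\geq 0$ a.e. For the $L^{\infty}$ bound I would follow the Stampacchia truncation template of Section~\ref{s5} with $v_{k}=\max\{\Theta-k,0\}$ and $k_{0}=0$: writing $\Theta=v_{k}+k$ on $A_{k}(t)$ and testing \eqref{j-14} with $v_{k}$, the linearity of the equation removes the cubic term that forced the nonlinear Grönwall inequality in Section~\ref{s5}, so classical Grönwall suffices and one arrives at an estimate of the form
\[
f_{\tau}(k_{2})\leq\frac{C}{(k_{2}-k)^{(2d+4)/d}}f_{\tau}(k)^{1+2/d},
\]
to which the classical Stampacchia Lemma \cite[Lemma~4.1]{S1} applies exactly as in Step~2 of Section~\ref{s7}, yielding $\|\Theta\|_{L^{\infty}(\Omega_{T})}\leq C_{\Theta}$. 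Uniqueness is standard: the difference of two solutions satisfies the homogeneous version of \eqref{j-14} with zero initial datum, and the same energy estimate plus Grönwall forces it to vanish. Translating back to $\Phi$ preserves all four properties.

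The main obstacle I anticipate is not conceptual but rather ensuring that the Stampacchia iteration genuinely produces a constant $C_{\Phi}$ that depends \emph{only} on the data of \eqref{c-1}-\eqref{c-3} and the admissible-set parameters, and not implicitly on uncontrolled norms of $R^{*}$ or $u^{*}$; this reduces to the quantitative bound $\|b\|_{L^{\infty}(\Omega_{T})}\leq(\rho+M)(1+2C_{0})$ established at the outset, which closes the dependency loop. All other steps are direct linear adaptations of machinery already developed in Sections~\ref{s4}-\ref{s7}.
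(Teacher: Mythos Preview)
Your proposal is correct and mirrors the paper's approach exactly: reduce to the forward problem \eqref{j-14}, verify $b\in L^{\infty}(\Omega_{T})$ via the state bound from Theorem~\ref{Th-ex1}, and rerun the machinery of Sections~\ref{s4}--\ref{s5} with the classical Gr\"onwall lemma replacing the nonlinear one---this is precisely the one-line justification the paper gives, and your write-up simply supplies the details. One small caveat on the $L^{\infty}$ step: testing \eqref{j-14} with $v_{k}$ and writing $\Theta=v_{k}+k$ on $A_{k}(t)$, the term $b\Theta v_{k}$ contributes $k\int_{A_{k}}bv_{k}$, so the resulting level-set inequality actually carries a factor $k^{(2d+4)/d}$ on the right (the form of Lemma~\ref{L-1}, not the classical Stampacchia lemma); the routine remedy is the substitution $\widetilde{\Theta}=e^{-\lambda t}\Theta$ with $\lambda\geq\|b\|_{L^{\infty}(\Omega_{T})}$, after which the zero-order coefficient $b-\lambda$ is sign-favourable and your displayed $k$-free inequality holds verbatim, so Step~2 of Section~\ref{s7} applies as you claim.
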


Now, we are ready to describe the necessary condition of the
optimality.
\begin{theorem}\label{Th-nec}
Let $R^*$ be an optimal control and $u^*$ be the corresponding state. Then
 the following  necessary  condition of the optimality holds
\begin{equation}\label{j-nc}
\int\limits_{0}^{T}\int \limits_{\Omega} { R^*(x,t) g(x,t) \, dx dt}
\leqslant\int\limits_{0}^{T}\int \limits_{\Omega} { R(x,t) g(x,t)
\, dx dt} \ \ \forall\,R \in \mathcal{U},
\end{equation}
where $g(x,t)= \Phi \, u^*( u^* -1) $.
\end{theorem}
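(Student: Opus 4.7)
The plan is to combine the first-order optimality inequality $J(R_\varepsilon)\geq J(R^*)$, which holds along any admissible perturbation of $R^*$, with the duality between the sensitivity system \eqref{j-10} and the adjoint system \eqref{j-13}. First I would exploit the convexity of $\mathcal{U}$: for any $R\in\mathcal{U}$, the curve
\[
R_\varepsilon = R^* + \varepsilon(R-R^*),\qquad \varepsilon\in[0,1],
\]
satisfies the pointwise bound $0\leq R_\varepsilon\leq M$ (as a convex combination of two elements in $[0,M]$) and the integral equality $\int_{\Omega_T}R_\varepsilon\,dxdt=\Gamma$ (by linearity), so $R_\varepsilon\in\mathcal{U}$. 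The optimality of $R^*$ then yields $J(R_\varepsilon)\geq J(R^*)$ for every such $\varepsilon$.

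Next, invoking Proposition~\ref{p8.1} and Lemma~\ref{lem-s} with direction $\mathcal{R}=R-R^*$, the one-sided directional derivative exists, is nonnegative, and is represented as
\[
0\;\leq\;\lim_{\varepsilon\to 0^+}\frac{J(R_\varepsilon)-J(R^*)}{\varepsilon}\;=\;\int_{\Omega_T}\Psi(x,t)\,dx\,dt,
\]
where $\Psi\in V^{2}(\Omega_T)$ with $\partial_t\Psi\in L^{2}(0,T;(H^{1}(\Omega))^{*})$ solves \eqref{j-10} with $u_R=u^*$ and $\mathcal{R}=R-R^*$. I would then rewrite the right-hand side via the adjoint: multiply the sensitivity equation by $\Phi$, integrate over $\Omega_T$, integrate by parts twice in space (the Neumann conditions on both $\Psi$ and $\Phi$ cancel the boundary contributions), and integrate by parts once in time, using $\Psi(\cdot,0)=0$ and $\Phi(\cdot,T)=0$ to annihilate the time-boundary term. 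The operator acting on $\Psi$ that emerges is precisely the left-hand side of \eqref{j-13}, which equals $-1$, so
\[
\int_{\Omega_T}\Psi\,dx\,dt \;=\; -\int_{\Omega_T}(R-R^*)\,u^{*}(1-u^{*})\,\Phi\,dx\,dt\;=\;\int_{\Omega_T}(R-R^*)\,g\,dx\,dt,
\]
with $g=\Phi\,u^{*}(u^{*}-1)$. Inserting this identity into the previous inequality and rearranging produces \eqref{j-nc}.

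The main technical obstacle is carrying out the duality computation at the weak regularity actually available, since $\Psi,\Phi$ lie only in $V^{2}(\Omega_T)$ with distributional time derivatives in $L^{2}(0,T;(H^{1}(\Omega))^{*})$. I would justify it through the pairing identity $\langle\partial_t\Psi,\Phi\rangle+\langle\partial_t\Phi,\Psi\rangle=\tfrac{d}{dt}(\Psi,\Phi)_{L^{2}(\Omega)}$, legitimate because the embedding \eqref{i.1} places both $\Psi$ and $\Phi$ in $C([0,T];L^{2}(\Omega))$, so the point-evaluations at $t=0$ and $t=T$ that make the time-boundary term vanish are well defined. All resulting integrands belong to $L^{1}(\Omega_T)$ thanks to the boundedness of $D$, of $u^{*}$ (Theorem~\ref{Th-ex1}), of $\Phi$ (Lemma~\ref{Th-opt}), and of $R,R^*$. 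A minor secondary point is the passage to the limit in the difference quotient: the weak convergence $\Psi_\varepsilon\rightharpoonup\Psi$ in $L^{2}(0,T;H^{1}(\Omega))$ furnished by Proposition~\ref{p8.1} is more than enough to test against the constant function~$1$ and obtain convergence of $\int_{\Omega_T}\Psi_\varepsilon\,dx\,dt$ to $\int_{\Omega_T}\Psi\,dx\,dt$.
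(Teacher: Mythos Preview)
Your proposal is correct and follows essentially the same route as the paper: convexity of $\mathcal{U}$ to legitimize the perturbation $\mathcal{R}=R-R^*$, the first-order inequality from Lemma~\ref{lem-s}, and the duality between \eqref{j-10} and \eqref{j-13} to convert $\int_{\Omega_T}\Psi$ into $\int_{\Omega_T}(R-R^*)g$. The only cosmetic difference is that the paper tests the adjoint equation with $\Psi$ while you test the sensitivity equation with $\Phi$; these are the two sides of the same integration-by-parts identity, and your version is in fact more explicit about the weak-regularity justification than the paper's.
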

\begin{proof}
Assume that $R^*$ is an optimal control and $\mathcal{R} \in
L^{\infty}(\Omega_T)$ such that $R^* + \varepsilon \mathcal{R} \in
\mathcal{U}$ for some $\varepsilon > 0$. Let $ u_{R^* + \varepsilon
\mathcal{R}}$ be the unique solution of (\ref{c-1})--(\ref{c-3}))
when the control term is $R^* + \varepsilon \mathcal{R}$. Using the
formulation of a weak solution  for the "adjoint" problem (\ref{j-13})
with the test function $\Psi$ and keeping in mind Lemma \ref{lem-s},
we arrive at
\begin{align*}
0 &\leq \int \limits_{\Omega_T} {\Psi  \, dx dt}  = \int
\limits_{\Omega_T}  \Phi \Psi_t  \, dx dt -
\int \limits_{\Omega_T}  \Phi \bigg(  \nabla (D(x) \nabla \Psi)+ (\rho - R^*)(1-2u^*) \Psi\bigg)   \, dx dt\\
&= \int \limits_{\Omega_T}  \Phi \mathcal{R} u^*(u^*-1)  \, dx dt ,
\end{align*}
which means
\begin{equation*}\label{j-15}
0 \leqslant \int \limits_{\Omega_T} \Psi  \, dx dt =   \int
\limits_{\Omega_T}  \Phi \mathcal{R} u^*( u^* -1)  \, dx dt.
\end{equation*}
In fine, we consider an arbitrary function $R \in \mathcal{U}$ and
set $\mathcal{R} = R - R^*$. It is easy to check that the set
$\mathcal{U}$ is convex. Thanks to the last relations, we end up
with the inequality
$$
\int \limits_{\Omega_T}  (R - R^* )\Phi \, u^*( u^* -1)  \, dx dt
\geqslant 0 \ \ \forall\,R \in \mathcal{U},
$$
which in turn  arrives at the  necessary optimality condition
(\ref{j-nc}).
\end{proof}

Our last result concerns with the representation of $R^{*}$ and with
the uniqueness of the optimal control. Before formulating this
claim, we describe some important properties of the function $g$
given in Theorem \ref{Th-nec}.
\begin{proposition}\label{p8.2}
Let \eqref{2.2} hold and
$$
0 \leqslant u_0(x) \leqslant 1,  \ \ \int \limits_{\Omega}{(| \ln(
u_0)| + | \ln(1-u_0)| ) \,dx} < \infty.
$$
We assume that  $a(x,t)=\rho-R^{*}$ and $g=\Phi u^{*}(u^{*}-1)$,
where $R^*$ and $u^*$ are the optimal control and
 its corresponding state, $\Phi$ is a solution of the adjoint problem
 \eqref{j-13}. Then the function $g$ satisfies the inequalities:
 \[
\mes\{(x,t)\in\Omega_{T}:\quad g=0\}=0\qquad\text{and}\quad
-\frac{C_{\phi}}{4}\leq g<0\quad\text{a.e. in}\quad\Omega_{T}.
 \]
\end{proposition}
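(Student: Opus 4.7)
The plan is to combine three ingredients: pointwise bounds on $u^*$ from the well-posedness theory of Section \ref{s3}, the uniform bound on $\Phi$ from Lemma \ref{Th-opt}, and strict positivity of $\Phi$ a.e.\ in $\Omega_T$, the latter obtained via comparison with an explicit subsolution.

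Under the hypotheses of the proposition, Theorem \ref{Th-2} gives $0\le u^*(x,t)\le 1$ a.e.\ in $\Omega_T$, while Corollary \ref{R-1} yields $\mes\{u^*=0\}=\mes\{u^*=1\}=0$. Since the polynomial $s\mapsto s(s-1)$ maps $[0,1]$ into $[-\tfrac14,0]$ and vanishes only at the endpoints, we immediately conclude that $-\tfrac14\le u^*(u^*-1)<0$ a.e.\ in $\Omega_T$.

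To establish the strict positivity of $\Phi$, I pass to the time-reversal $\Theta(x,t):=\Phi(x,T-t)$, which satisfies the linear forward problem \eqref{j-14} with coefficient $b$ bounded (by $R^*\in\mathcal{U}$ and Theorem \ref{Th-ex1}), source $+1$, zero initial datum, and homogeneous Neumann BC. Setting $c:=\|b\|_{L^\infty(\Omega_T)}$, the spatially constant function $\theta_*(t):=(1-e^{-ct})/c$ solves $\theta_*'+c\theta_*=1$ with $\theta_*(0)=0$; since $b+c\ge 0$ and $\theta_*\ge 0$, a direct substitution gives $(\theta_*)_t-\nabla(D\nabla\theta_*)-b\theta_* = 1-(b+c)\theta_*\le 1$. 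Consequently, the difference $w:=\Theta-\theta_*$ satisfies, in the weak sense,
\[
w_t-\nabla(D\nabla w)-b\,w\ge 0 \ \text{ in } \Omega_T,\qquad \nabla w\cdot\mathbf{n}=0 \ \text{ on } \partial\Omega_T,\qquad w(\cdot,0)=0.
\]
Testing against $-w_-$ with $w_-:=\max\{-w,0\}\in L^2(0,T;H^1(\Omega))$, exploiting \eqref{2.2} and integration by parts under the Neumann condition, and applying the classical Gr\"onwall lemma, I deduce $w_-\equiv 0$, i.e.\ $\Theta\ge\theta_*$ a.e. Translating back gives $\Phi(x,t)\ge (1-e^{-c(T-t)})/c>0$ for a.e.\ $(x,t)\in\Omega_T$.

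Finally, combining the above with Lemma \ref{Th-opt} (which yields $0\le\Phi\le C_\Phi$), we have $\Phi>0$ and $u^*(u^*-1)<0$ a.e., whence $g=-\Phi\,|u^*(u^*-1)|\in[-C_\Phi/4,\,0)$ a.e.\ in $\Omega_T$, which gives both assertions of the proposition. The only nonroutine step is the weak comparison producing $\Theta\ge\theta_*$; this is handled by the Stampacchia-type testing sketched above, and is standard for the $V^2(\Omega_T)$ regularity at our disposal with bounded measurable $D$ and $b$.
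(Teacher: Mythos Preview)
Your proof is correct and follows the same outline as the paper's: bound $u^*$ strictly between $0$ and $1$ via Theorem~\ref{Th-2} and Corollary~\ref{R-1}, bound $\Phi$ via Lemma~\ref{Th-opt}, and combine. The one substantive difference is that the paper simply asserts $\Phi>0$ a.e.\ as a consequence of Lemma~\ref{Th-opt} (which, as stated, only gives nonnegativity), whereas you supply an explicit comparison with the spatially constant subsolution $\theta_*(t)=(1-e^{-ct})/c$ of \eqref{j-14} to obtain $\Phi(x,t)\ge (1-e^{-c(T-t)})/c>0$; this is a clean way to close that gap and is fully justified at the $V^2(\Omega_T)$ regularity available.
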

\begin{proof}
It is apparent that the first equality is a simple consequence of
Corollary \ref{R-1} and Lemma~\ref{Th-opt}, which (in particulary)
provide that
\begin{equation}\label{9.0}
u^{*},\Phi\neq 0\quad\text{and}\quad u^{*}\neq 1 \quad\text{ a.e.
in}\quad \Omega_{T}.
\end{equation}

As for the remaining inequalities in this claim, they  follow  from
Lemma~\ref{Th-opt} and Theorem \ref{Th-ex1} or \ref{Th-2} (depending
on the sign of $a(x,t)$).
 Indeed,  utilizing \eqref{9.0} with Lemma~\ref{Th-opt} and Theorem \ref{Th-ex1} or
 \ref{Th-2}
yields
\begin{equation*}\label{9.1}
0< \Phi\leq C_{\Phi}\qquad\text{and}\qquad 0< u^{*}<
1\quad\text{a.e.}\quad\text{in}\quad\Omega_{T}.
\end{equation*}
Finally,  exploiting the inequalities related with $u^{*}$  and
performing technical calculations, we conclude that
\begin{equation*}\label{9.2}
0< u^{*}(1-u^{*})\leq \frac{1}{4}\quad\text{a.e. in}\quad\Omega_{T}.
\end{equation*}
In fine, collecting all obtained estimates ends up with the desired
bounds, which finishe the proof of this proposition.
\end{proof}
Denoting
\[
\mathfrak{W}=\frac{\Gamma}{M}
\]
for each given threshold values $\Gamma$ and $M$ satisfying
\eqref{8.1}, we define the quantity
\[
\kappa^{*}=\sup\Big\{\kappa\in\Big[-\frac{ C_{\Phi}}{4},0\Big):\,
\mes \{(x,t)\in\Omega_{T}:\, g(x,t)<\kappa\}\leq \mathfrak{W}\Big\}
\]
and introduce the sets
\[
\mathcal{E}_{1}=\{(x,t)\in\Omega_{T}:\,
g(x,t)<\kappa^{*}\}\qquad\text{and}\qquad
\mathcal{E}_{2}=\{(x,t)\in\Omega_{T}:\, g(x,t)=\kappa^{*}\}.
\]
\begin{theorem}\label{Th-un}
Let \eqref{2.2} hold and
$$
0 \leqslant u_0(x) \leqslant 1,  \ \ \int \limits_{\Omega}{(| \ln(
u_0)| + | \ln(1-u_0)| ) \,dx} < \infty.
$$
We assume that $R^*$ and $u^*$ are the optimal control and
 its corresponding state.
Then the optimal control is unique if either $ \mathfrak{W}=
|\Omega|T$ or $\mathfrak{W}<|\Omega|T$ and $ |\mathcal{E}_{2}|=0$.
Besides,
\begin{equation*}\label{10.1}
R^{*}=\begin{cases} M\chi_{\Omega_{T}}\qquad\text{if}\quad \mathfrak{W}= |\Omega|T,\\
M\chi_{\mathcal{E}_{1}} \quad\quad\text{ if}\quad
\mathfrak{W}<|\Omega|T\quad\text{and}\quad |\mathcal{E}_{2}|=0.
\end{cases}
\end{equation*}
If $\mathfrak{W}<|\Omega|T$ and the set $\mathcal{E}_{2}$ has a
positive Lebesgue measure, then $R^{*}$ is written in the form
\[
R^{*}=M\chi_{\mathcal{E}_{1}}+CM\chi_{\mathcal{E}_{2}}
\]
with  constant
$C=\frac{\mathfrak{W}-|\mathcal{E}_{1}|}{|\mathcal{E}_{2}|}$.
 This minimizer is unique if either
$\mathfrak{W}=|\mathcal{E}_{1}|$ or
$\mathfrak{W}=\{(x,t)\in\Omega_{T}:\, g(x,t)\leq\kappa^{*}\}$.
\end{theorem}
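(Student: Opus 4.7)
The starting point is the necessary optimality condition (\ref{j-nc}), which, read the other way, asserts that $R^{*}$ minimizes the linear functional $R\mapsto\int_{\Omega_{T}}R(x,t)g(x,t)\,dxdt$ over the admissible set $\mathcal{U}$. By Proposition~\ref{p8.2}, the weight $g$ satisfies $-C_{\Phi}/4\leq g<0$ a.e.\ in $\Omega_{T}$. The plan is to reduce the theorem to a classical \emph{bathtub} (layer-cake) minimization: to minimize a linear functional with a strictly negative weight subject to $0\leq R\leq M$ and $\int_{\Omega_{T}}R=\Gamma$, one must concentrate the mass $R=M$ on the sublevel sets of $g$ where $g$ is most negative. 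This immediately motivates the candidate $R^{*}=M\chi_{\mathcal{E}_{1}}+CM\chi_{\mathcal{E}_{2}}$ with $C$ chosen to enforce the integral constraint.

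The case $\mathfrak{W}=|\Omega|T$ is immediate: the pointwise bound together with $\int_{\Omega_{T}}R=M|\Omega|T$ forces $R=M$ a.e., so $\mathcal{U}=\{M\chi_{\Omega_{T}}\}$ and uniqueness is automatic. For the case $\mathfrak{W}<|\Omega|T$, I would first justify that $\kappa^{*}$ is well-defined and attained in $[-C_{\Phi}/4,0)$. The map $\kappa\mapsto|\{g<\kappa\}|$ is non-decreasing, equals $0$ at $\kappa=-C_{\Phi}/4$ and tends to $|\Omega|T>\mathfrak{W}$ as $\kappa\to 0^{-}$ (using $g<0$ a.e.); the supremum therefore exists and is strictly negative, and by monotonicity one obtains $|\mathcal{E}_{1}|\leq\mathfrak{W}\leq|\mathcal{E}_{1}|+|\mathcal{E}_{2}|$.

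Next, I would check admissibility and optimality of the candidate. The choice $C=(\mathfrak{W}-|\mathcal{E}_{1}|)/|\mathcal{E}_{2}|$ makes $\int_{\Omega_{T}}R^{*}=M|\mathcal{E}_{1}|+CM|\mathcal{E}_{2}|=\Gamma$, and the inequality $|\mathcal{E}_{1}|\leq\mathfrak{W}\leq|\mathcal{E}_{1}|+|\mathcal{E}_{2}|$ yields $0\leq C\leq 1$, hence $0\leq R^{*}\leq M$. For the optimality, given any competitor $R\in\mathcal{U}$ the identity $\int_{\Omega_{T}}(R-R^{*})\,dxdt=0$ allows the key rewriting
\[
\int_{\Omega_{T}}(R-R^{*})\,g\,dxdt=\int_{\Omega_{T}}(R-R^{*})(g-\kappa^{*})\,dxdt.
\]
On $\mathcal{E}_{1}$ one has $R-R^{*}\leq 0$ and $g-\kappa^{*}<0$; on $\Omega_{T}\setminus(\mathcal{E}_{1}\cup\mathcal{E}_{2})$ one has $R-R^{*}\geq 0$ and $g-\kappa^{*}\geq 0$; on $\mathcal{E}_{2}$ the factor $g-\kappa^{*}$ vanishes. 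Hence the right-hand side is non-negative, which combined with (\ref{j-nc}) confirms $R^{*}$ is optimal and gives the stated representation.

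Uniqueness is the delicate point and constitutes the main obstacle. The estimate above shows that any other minimizer $R\in\mathcal{U}$ must agree with $R^{*}$ a.e.\ on $\Omega_{T}\setminus\mathcal{E}_{2}$, since otherwise the integrands on $\mathcal{E}_{1}$ and on its complement are strictly of the stated signs on sets of positive measure, contradicting the equality of the two sides. Consequently, if $|\mathcal{E}_{2}|=0$ the minimizer is unique and reduces to $M\chi_{\mathcal{E}_{1}}$. When $|\mathcal{E}_{2}|>0$ the residual mass $\Gamma-M|\mathcal{E}_{1}|$ may be redistributed arbitrarily within $\mathcal{E}_{2}$ subject only to $0\leq R\leq M$, so uniqueness generically fails; it survives exactly in the two degenerate situations where no freedom remains, namely when the residual is $0$ (i.e.\ $\mathfrak{W}=|\mathcal{E}_{1}|$, forcing $R=0$ on $\mathcal{E}_{2}$) or when it saturates $\mathcal{E}_{2}$ at level $M$ (i.e.\ $\mathfrak{W}=|\mathcal{E}_{1}|+|\mathcal{E}_{2}|=|\{g\leq\kappa^{*}\}|$). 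The subtlety is precisely that the objective is \emph{flat} on $\mathcal{E}_{2}$, so uniqueness hinges purely on the measure-theoretic structure of the level set $\{g=\kappa^{*}\}$ rather than on any strict convexity or pointwise strict inequality argument.
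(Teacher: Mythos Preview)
Your proposal is correct and follows essentially the same route as the paper: both reduce, via the necessary condition (\ref{j-nc}), to the linear minimization $\inf_{R\in\mathcal{U}}\int_{\Omega_T}Rg\,dxdt$ and then invoke a bathtub-type argument together with Proposition~\ref{p8.2}. The only cosmetic differences are that the paper first normalizes to $M=1$ by the rescaling $\hat{R}=R/M$ and then cites the Bathtub principle from Lieb--Loss \cite[Theorem~1.14]{LL}, whereas you work directly with general $M$ and supply the bathtub argument by hand through the identity $\int_{\Omega_T}(R-R^{*})g\,dxdt=\int_{\Omega_T}(R-R^{*})(g-\kappa^{*})\,dxdt$; your version is thus slightly more self-contained but otherwise identical in substance.
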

\begin{proof}
First, we prove this claim in the special case $M=1$, and then we
describe how this restriction can be removed. Theorem \ref{Th-nec}
dictates us that a minimizer of $J(R)$ should be the bang-bang type.
In particular, it means that $R^{*}$ is also a minimizer of the
problem
\begin{equation}\label{11.1}
\hat{J}(R)=\underset{R\in\mathcal{U}}{\inf}\int_{\Omega_{T}}R(x,t)g(x,t)dxdt.
\end{equation}
Applying Bathtub principle (see, e.g., \cite[Theorem 1.14, Chapter
1]{LL}) to problem \eqref{11.1} provides a solution $R^{*}$ in the
form
\begin{equation}\label{11.2}
R^{*}=\chi_{\widehat{\mathcal{E}}_{1}}+C\chi_{\widehat{\mathcal{E}}_{2}},
\end{equation}
where
\[
\widehat{\mathcal{E}}_{1}=\{(x,t)\in\Omega_{T}:\quad
g<\widehat{\kappa}^{*}\}\quad\text{and}\quad
\widehat{\mathcal{E}}_{2}=\{(x,t)\in\Omega_{T}:\quad
g=\widehat{\kappa}^{*}\}
\]
with the value $\widehat{\kappa}^{*}$ defined as
\[
\widehat{\kappa}^{*}=\sup\{\kappa\in\mathbb{R}:\quad\mes\{(x,t)\in\Omega_{T}:\,
g<\kappa\}\leq\Gamma\},
\]
while the constant $C$ is identified via the equality
$C|\widehat{\mathcal{E}}_{2}|=\Gamma-|\widehat{\mathcal{E}}_{1}|$.

At this point, we utilize Proposition \ref{p8.2} to deduce the
following inequalities:
\begin{align*}
&\mes\{(x,t)\in\Omega_{T}:\, g<\kappa\}=\begin{cases}
0\quad\quad\text{ if}\quad
\kappa\in\Big(-\infty,-\frac{C_{\Phi}}{4}\Big),\\
T|\Omega|\quad\text{if}\quad\kappa\in[0,+\infty),
\end{cases}\\
& 0\leq\mes\{(x,t)\in\Omega_{T}:\, g<\kappa\}\leq
T|\Omega|\quad\text{if}\quad \kappa\in\Big[-\frac{C_{\Phi}}{4},0\Big),\\
&\mes\{(x,t)\in\Omega_{T}:\, g=0\}=0.
\end{align*}
Exploiting these relations and taking into account the restriction
on the $\Gamma$ (see \eqref{8.1} with $M=1$), we find:

\noindent$\bullet$  $ \widehat{\kappa}^{*}= +\infty,$  $
\widehat{\mathcal{E}}_{1}= \Omega_{T}$ and
$|\widehat{\mathcal{E}}_{2}|=0$, if only  $\Gamma=T|\Omega|$;

\noindent$\bullet$
$\widehat{\kappa}^{*}=\kappa^{*}\in\Big[-\frac{C_{\Phi}}{4},0\Big]$,
$ \widehat{\mathcal{E}}_{1}= \mathcal{E}_{1}$ and
$\widehat{\mathcal{E}}_{2}=\mathcal{E}_{2}$, if $\Gamma<T|\Omega|$.

 Collecting these relations with the representation \eqref{11.2}, we
 complete the proof of Theorem \ref{Th-un} in the special case
 $M=1$. In order to verify this claim in the case of arbitrary positive
 $M$,  introducing
 \begin{align*}
\hat{R}&=\frac{R}{M},\quad
\hat{g}=Mg,\quad\hat{\Gamma}=\frac{\Gamma}{M},\\
\widehat{\mathcal{U}}&=\Big\{\hat{R}\in L^{\infty}(\Omega_{T}):\quad
0\leq \hat{R}\leq 1\quad\text{and}\quad
\int_{\Omega_{T}}\hat{R}dxdt=\hat{\Gamma}\Big\},
 \end{align*}
 and performing direct calculations, we reduce  \eqref{11.1}
 to the problem
 \[
\hat{J}(\hat{R})=\underset{\hat{R}\in\widehat{\mathcal{U}}}{\inf}\int_{\Omega_{T}}\hat{R}\hat{g}dxdt.
 \]
After that, arguing similar to the case $M=1$, we arrive at the
desired results. That finishes the proof of Theorem \ref{Th-un}.
\end{proof}


\section{Numerical Study}\label{s9}

\noindent In this section, we numerically simulate the optimal control problem presented in Section \ref{s8}. The nonlinear parabolic equation \eqref{c-1}--\eqref{c-3} is solved forward in time $t\in [0,T]$ and is referred to as the forward problem. The adjoint equation \eqref{j-13} is solved backward in time $\tau = T-t \in [0,T]$ using the formulation \eqref{j-14} and is referred to as the backward problem. To enforce the constraint $\int_{\Omega_T}R(x,t)~dxdt=\Gamma$, we augment the objective function \eqref{eq:objective} to
\begin{equation}
    \tilde{J}(R) = \int \limits_{\Omega_T} { u_{R}(x,t)  \, dx
dt } + \frac{\lambda}{2} \left(\int_{\Omega_T}R\,dxdt - \Gamma\right)^2,
\label{eq:modified_obj}
\end{equation}
where $\lambda$ is a penalty parameter of choice. The optimal control problem \eqref{eq:optControl_obj} is relaxed to
\begin{equation}
\label{eq:optControl_relax}
\tilde{J}(R^*) = \mathop {\inf} \limits_{R  \in \tilde{\mathcal{U}} } \tilde{J}(R), \quad 
\tilde{\mathcal{U}} = \{ R \in L^{\infty}(\Omega_T): 0 \leqslant R(x,t)
\leqslant M \}.
\end{equation}
\begin{algorithm}
\caption{Gradient Descent}
\label{algo:gd}
\vspace{2mm}	
\begin{algorithmic}[1]
    \State \textbf{Input}: $\Gamma, \lambda, \Delta, n, T, M$
    \Comment{$\Delta$:= Step size, $n$:= Number of iterations}
    \State \textbf{Initialize} $u(x,0), \Phi(x,t=T), R_0(x,t)$
    \Comment{$R_0(x,t)$:=Initial guess for the control variable}
    \State Solve for $u(x,t)$ in \eqref{c-1}-\eqref{c-3}, with $R(x,t)$ set to $R_0(x,t)$.
    \State Compute the initial cost $J_0$ \eqref{eq:objective} with $u(x,t)$, $R_0(x,t)$
    \For{$k = 1:n$}
    \State Solve for $\Phi(x,t)$ in \eqref{j-13}, with $R^*$ set to $R$, and $u^*$ set to $u$
    \State Compute $R_c(x,t) = R_{k-1}(x,t) + \Delta \left[\int_{\Omega}\Phi(x,t)u(1-u)~dx - \lambda\left(\int_{\Omega_T} R_{k-1}(x,t)~dxdt - \Gamma\right)\right]$.
    \If{$R_c(x,t_j)<0$ for some $t_j \in [0,T]$, $x \in [0,T]$} 
    \Comment{Projected Gradient}
    \State Set $R_c(x,t_j) = 0$
    \EndIf
    \If{$R_c(x,t_j)>M$ for some $t_j \in [0,T]$, $x \in [0,T]$} 
    \Comment{Projected Gradient}
    \State Set $R_c(x,t_j) = M$
    \EndIf
    \State Solve for $u_c(x,t)$ in \eqref{c-1}-\eqref{c-3}, with $R(x,t)$ set to $R_c(x,t)$.
    \State Compute cost $J_i$ with $u_c(x,t)$, $R_c(x,t)$
    \If{$J_k < J_{k-1}$}
    \State Set $R_{k}(x,t) = R_c(x,t)$, $u(x,t) = u_c(x,t)$, $\Delta = 2\Delta$
    \Else
    \State Set $\Delta = \Delta/2$
    \EndIf
    \EndFor
\end{algorithmic}
\end{algorithm} 
To find the optimal control $R^*(x,t)$, we perform gradient descent on $R$ to numerically solve the optimization problem \eqref{eq:optControl_relax}. Projected gradient descent is used to enforce the constraint $0 \leqslant R\leqslant M$, where $M = 1$ in all simulations.
This algorithm is presented in Algorithm \ref{algo:gd}. 

We numerically simulate the optimal control problem in two settings. In Section \ref{sec:1dControl}, we consider a one-dimensional spatial domain $\Omega = [0, L]$, where the control $u$ is either spatially uniform, $u = u(t)$, or spatially varying, $u = u(x, t)$. In Section \ref{sec:2dControl}, we present the results for optimal control on a two-dimensional domain $\Omega = [0, L] \times [0, L]$. For the one-dimensional case, we use a second-order finite-difference approximation for spatial discretization on a uniform grid, along with backward Euler for time-stepping, for both the forward and backward problems. For the two-dimensional case, we use the finite element package in MATLAB's Partial Differential Equations toolbox to simulate the optimal control problem.

\noindent
\subsection{One-dimensional optimal control}
\label{sec:1dControl}
\begin{figure}
    \centering
    \includegraphics[width=0.6\linewidth]{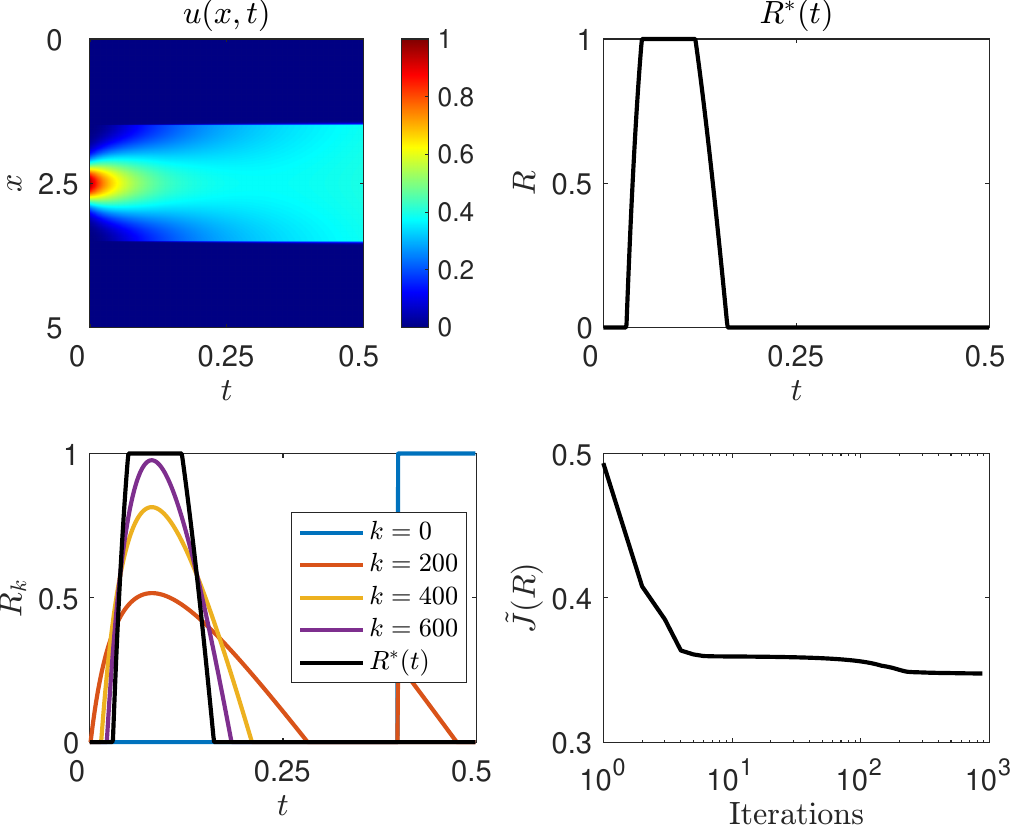}
    \caption{(Top left) controlled $u(x,t)$, (top right) optimal $R(t)$, (bottom left) convergence to the optimal $R(t)$ from an initial guess (blue), and (bottom right) cost functional over iterations.}
    \label{fig:uniform_control}
\end{figure}
We first present numerical results for simulating the optimal control problem over a one-dimensional domain $\Omega=[0,L]$, where $L = 5$. The terminal time is set to be $T = 0.5$. The invasion rates in the white brain tissue, $x \in [0.15,0.35]$, and the grey brain tissue, $x \in [0,0.15)\cup (0.35,0.5]$, are $D_w = 1$ and $D_g = 0.001$, respectively. The initial condition for $u$ is given by
\begin{equation}
u_0(x) = u(x,0) = \exp\left(-8(x-\tfrac{1}{2}L)^2\right).
\label{eq:u_ic}
\end{equation}
\subsubsection{Spatially-uniform control}
For the case of spatially uniform control, $R = R(t)$, 
we set the initial guess for $R(t)$ to be
\begin{equation}
    R_0(t)=
    \begin{cases}
    0 \quad \text{for } 0 \le t < 0.4,\\
    1 \quad \text{for } 0.4 \le t \le 0.5,
    \end{cases}
\label{eq:R_guess_uniform}
\end{equation}
and the corresponding $\Gamma = 0.5$.
The gradient descent algorithm \ref{algo:gd} is applied to solve the optimal control problem, where the penalty coefficient is set to $\lambda = 100$.
Figure \ref{fig:uniform_control} (top) shows the controlled profile $u(x,t)$ and the optimal control $R^{*}(t)$ for the case where $R$ is spatially uniform. Figure \ref{fig:uniform_control} (bottom) illustrates the convergence to $R^{*}(t)$ from the initial guess \eqref{eq:R_guess_uniform} and the history of the objective function $\tilde{J}(R)$ in \eqref{eq:modified_obj} over iterations. The plot of the controlled tumor cell density, $u(x,t)$, shows that the initial Gaussian profile \eqref{eq:u_ic} quickly diffuses into the white brain tissue zone and remains at a low magnitude due to the combined effects of cell proliferation and radiation therapy.
The profile of the desired loss term $R^*(t)$ demonstrates that for this setting, the radiotherapy should only take place for a short period of time at the maximum value $M = 1$, and $R^*$ remains close to zero for the later stage of the time period. 

\subsubsection{Spatially-nonuniform control}
For the case of one-dimensional spatially nonuniform control, $R = R(x,t)$, we set the initial guess for $R(x,t)$ to be spatial and temporal uniform,
\begin{equation}
R_0(x,t) \equiv \Gamma /(LT),
\end{equation}
 where the constraint $\int_{\Omega_T}R_0\,dxdt = \Gamma$ is satisfied.
 The other settings are identical to the ones in the spatially uniform case in Figure \ref{fig:uniform_control}.
Figure \ref{fig:nonuniform_control} shows a comparison of the controlled $u(x,t)$ and the optimal control variable $R^*(x,t)$ for $\Gamma = 0.5$ and $\Gamma = 0.75$, respectively. Similar to the spatially uniform control case, for both values of $\Gamma$, the controlled tumor cell density $u(x,t)$ spreads into the white brain tissue zone. During the early stage, from $t = 0$ to $t = 0.1$, the optimal control $R^*$ reaches its maximum value, $M = 1$, at locations centered around the region of high tumor cell density. As $u$ spreads in space, the positive region of $R^*$ also expands until it reaches the interface between the white/grey brain tissue zones. Similar to the spatially uniform control case, in the non-uniform control case, high-intensity radiotherapy in the optimal control $R^*$ occurs in the early stages and remains low later on, with the duration of the high-intensity period depending on the value of $\Gamma$.


\begin{figure}
     \centering
     \begin{subfigure}{0.6\linewidth}
         \centering
    \includegraphics[width=\linewidth]{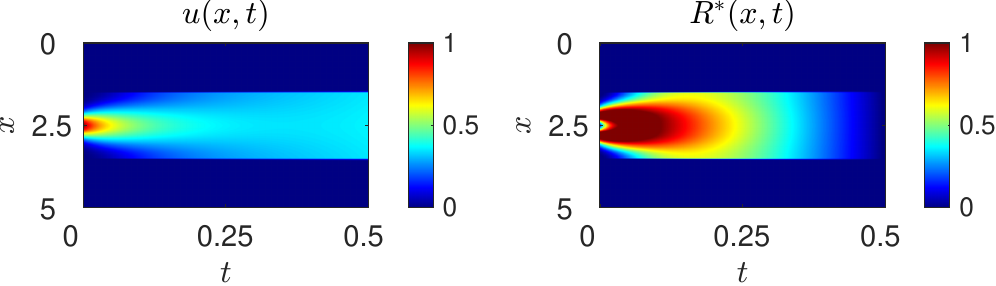}
         \caption{$\Gamma = 0.5$}
         \label{fig:Gamma=5e-1}
     \end{subfigure}
     \hfill
     \begin{subfigure}[b]{0.6\linewidth}
         \centering
    \includegraphics[width=\linewidth]{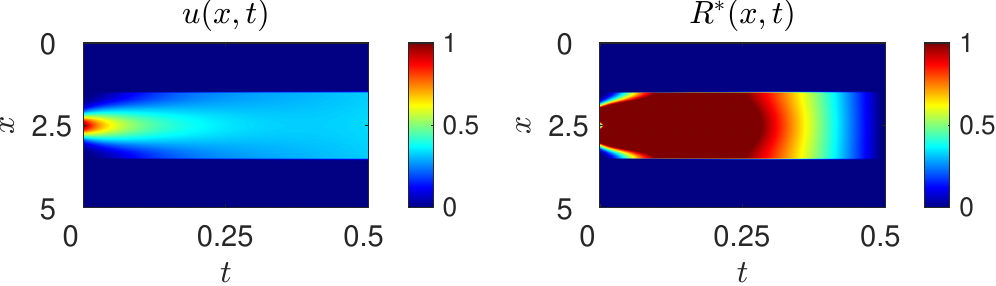}
         \caption{$\Gamma = 0.75$}
         \label{fig:Gamma=75e-2}
     \end{subfigure}
        \caption{Controlled profiles $u(x,t)$ and the control variable $R^*(x,t)$ with (A) the total $\Gamma = 0.5$ and (B) the total $\Gamma = 0.75$.
        }
        \label{fig:nonuniform_control}
\end{figure}

\subsection{Two-dimensional optimal control}
\label{sec:2dControl}
Next, we present the simulation results for the optimal control problem over a two-dimensional space $\Omega = [0,L]\times[0,L]$ with $L = 5$. The terminal time $T$ and invasion rates $D_w$ and $D_g$ are identical to those used in the one dimensional case in Subsection \ref{sec:1dControl}.
The region of white brain tissue is defined as 
\begin{equation}
\Omega_w = \{(x,y)\in \Omega: (x-\tfrac{L}{2})^2+4(y-\tfrac{L}{2})^2 \leqslant 1\},
\label{eq:Omegaw}
\end{equation}
and the region of grey brain tissue $\Omega_g = \Omega \backslash \Omega_w$. We set the initial condition for the tumor cell density to be
\begin{equation}
u_0(x,y) = u(x,y,0) = \exp\left(-5\left[(x-\tfrac{1}{2}L)^2 + (y - \tfrac{1}{2}L)^2\right]\right).
\end{equation}
The initial guess for $R(x,y,t)$ is given by
\begin{equation}
    R_0(x,y,t) = 
    \begin{cases}
    \frac{\Gamma}{|\Omega_w|T},\quad &(x,y)\in \Omega_w,\\
    0, &\mbox{ otherwise}.
    \end{cases}
\end{equation}
where $|\Omega_w|$ is the area of the white brain tissue region \eqref{eq:Omegaw}. This choice ensures that the initial guess $R_0$ satisfies the constraint $\int_{\Omega_T} R_0 = \Gamma$.  We set the total amount $\Gamma = 0.7$ and the penalty coefficient $\lambda = 1$ for this example.

Figure \ref{fig:2d_control} illustrates the evolution of the optimal control variable $R^*(x,y,t)$ and the corresponding controlled profiles $u(x,y,t)$ over time. Starting from a Gaussian profile centered in the domain, the controlled tumor cell density $u$ rapidly diffuses within $\Omega_w$, gradually evolving into a spatially uniform profile at a reduced density. The optimal control $R^*$ between $t = 0$ and $t = 0.3$ forms islands of magnitude $M = 1$, with the shape of these islands transitioning from circular at $t = 0$ to resemble the geometry of $\Omega_w$ by $t = 0.3$. For $t > 0.3$, we observe that the optimal control $R^* \equiv 0$, which is similar to the pattern observed in the one-dimensional cases, where radiotherapy is excluded in the later stages of optimal control.

\begin{figure}
     \centering
     \begin{subfigure}{0.9\linewidth}
         \centering
    \includegraphics[width=1\linewidth]{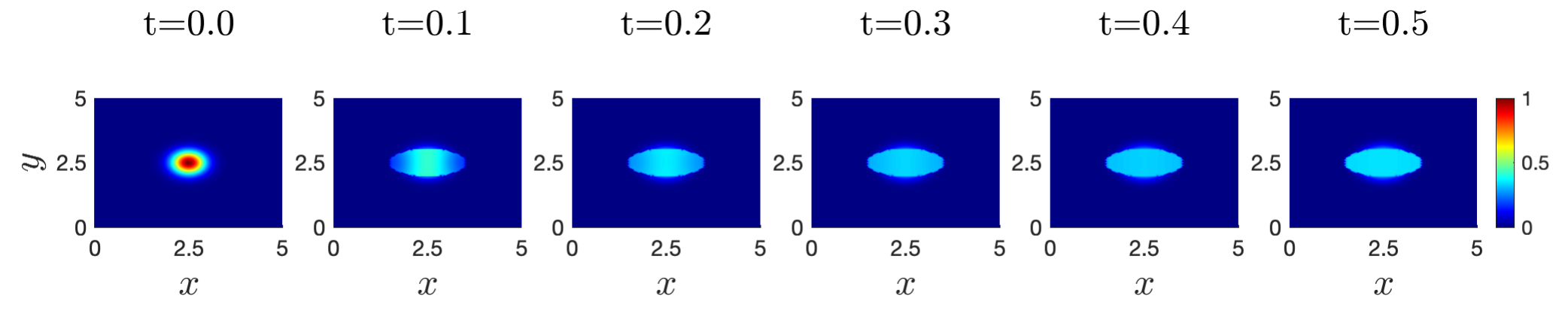}
     \end{subfigure}
     \hfill
     \begin{subfigure}[b]{0.9\linewidth}
         \centering
    \includegraphics[width=1\linewidth]{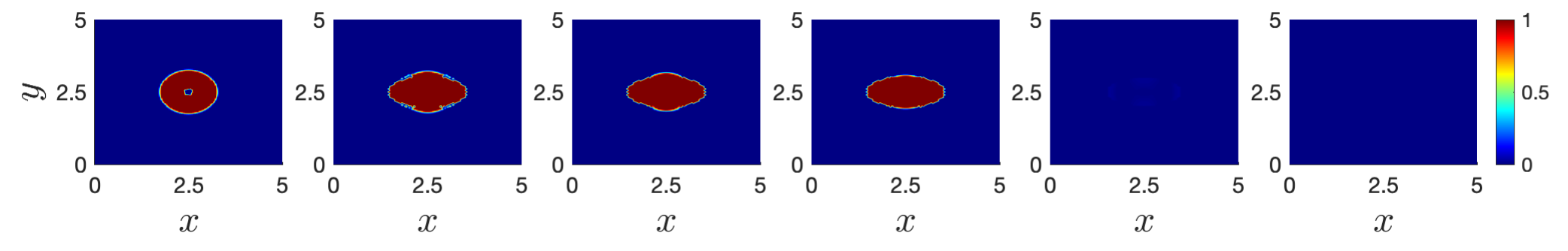}
     \end{subfigure}
        \caption{(Top) Controlled profiles $u(x,y,t)$ and (bottom) the control variable $R^*(x,y,t)$ in time, where $\Gamma = 0.7$.}
        \label{fig:2d_control}
\end{figure}

\section{Conclusions and discussion}

In this paper, we analytically study a parabolic PDE model for tumor cell density influenced by radiation therapy and its associated optimal control problem. We analyze the well-posedness and regularity of the PDE model and investigate an optimal control setting aiming at minimizing the overall tumor density in space and in time, where the loss term representing the effects of radiation therapy, $R(x,t)$, serves as the control. 

Our numerical studies in both one- and two-dimensional cases suggest that for this optimal control problem, the optimal strategy is for radiation therapy to be applied to the region of interest (the white brain tissue) during the early stage of the treatment. During this time, the shape of the high-intensity radiotherapy region resembles that of the high-intensity tumor cell region, until the tumor cells spread to the interface between the white and grey brain tissue. Additionally, within the high-intensity radiotherapy region, the optimal treatment plan maintains the radiation intensity at its maximum value (determined by a set threshold). After this initial high-intensity period, the radiotherapy intensity remains close to zero for the remainder of the treatment.

We expect that the results on the optimal control problem could motivate further work in the direction of optimizing radiotherapy treatment planning. To make the control problem more realistic, we need to incorporate more patient-specific constraints in both temporal and spatial domains for the control variable $R(x,t)$. For example, feasible radiotherapy treatment may only occur during specific time intervals. Additionally, since $R(x,t)$ originates from the effects of external beam radiation, the optimal control may need to satisfy additional spatial constraints. At the modeling level, the governing PDE \eqref{c-1}--\eqref{c-3} assumes that the diffusion coefficient is homogeneous within the grey and white brain tissue, and the intrinsic proliferation rate is constant in space. More sophisticated physiological models and data-driven methodologies \cite{lima2017selection,Jarrett2018mathematical,ji2022post,Yankeelov2013clinically} may be required to improve the applicability of the study.



\end{document}